\theoremstyle{plain}
  \newtheorem{thm}{Theorem}[section]
  \newtheorem{lem}[thm]{Lemma}
  \newtheorem{cor}[thm]{Corollary}
  \newtheorem{prop}[thm]{Proposition}
\theoremstyle{definition}
  \newtheorem{ex}[thm]{Example}
\theoremstyle{remark}
  \newtheorem{rem}[thm]{Remark}
  \newtheorem*{ack}{Acknowledgments}
\newcommand{\Z}{\mathbb{Z}}
\newcommand{\cyclic}[1]{\Z/#1\Z}
\newcommand{\cyclicproduct}[2]{(\cyclic{#1})^{#2}}
\newcommand{\trivial}{\emptyset}
\newcommand{\zerocomp}{\emptyset_P}
\newcommand{\zerophrase}{\emptyset_M}
\newcommand{\xType}[1]{\lvert#1\rvert}
\newcommand{\abs}[1]{\lvert#1\rvert}
\newcommand{\cvector}[1]{\vec{#1}}
\newcommand{\gvector}[1]{\overrightarrow{#1}}
\newcommand{\SoAll}{V}
\newcommand{\SoDiag}{U}
\DeclareMathOperator{\rank}{rank}
\DeclareMathOperator{\hr}{hr}
\DeclareMathOperator{\nc}{nc}
\numberwithin{equation}{section}
\begin{document}
\title{Factorization of homotopies of nanophrases}
\author{Andrew Gibson}
\address{
Department of Mathematics,
Tokyo Institute of Technology,
Oh-okayama, Meguro, Tokyo 152-8551, Japan
}
\email{gibson@math.titech.ac.jp}
\date{\today}
\begin{abstract}
Homotopy on nanophrases is an equivalence relation defined using some data
 called a homotopy data triple.
We define a product on homotopy data triples.
We show that any homotopy data triple can be factorized into a product
 of prime homotopy data triples and this factorization is unique up to
 isomorphism and order.
If a homotopy data triple is composite, we show that equivalence of
 nanophrases under the corresponding homotopy can be calculated just by
 using the homotopies given by its prime factors.
\end{abstract}
\keywords{nanowords, nanophrases, homotopy invariant}
\subjclass[2000]{Primary 57M99; Secondary 68R15}
\thanks{This work was supported by a Scholarship from the Ministry of
Education, Culture, Sports, Science and Technology of Japan.} 
\maketitle
%%%%%%%%%%%%%%%%%%%%%%%%%%%%%%%%%%%%%%%%%%%%%%%%
\section{Introduction}
A word is a sequence of letters.
If every letter that appears in a word appears exactly twice, then the
word is a Gauss word.
Let $\alpha$ be a fixed set.
A nanoword over $\alpha$ is a Gauss word paired with a map from the set
of letters appearing in this word to $\alpha$.
A Gauss phrase is a sequence of words such that their concatenation
forms a Gauss word.
A nanophrase over $\alpha$ is a Gauss phrase paired with a map from the
set of letters appearing in this phrase to $\alpha$.
Turaev defined nanowords in \cite{Turaev:Words} and nanophrases in
\cite{Turaev:KnotsAndWords}.
\par
For nanowords and nanophrases, 
Turaev defined moves which are determined by an involution on $\alpha$
called $\tau$ and a subset of $\alpha \times \alpha \times \alpha$
called $S$.
We call the triple $(\alpha, \tau, S)$ a homotopy data triple.
Fixing a homotopy data triple, the moves generate an equivalence
relation on nanowords and nanophrases over $\alpha$ called homotopy.
Different homotopy data triples can give different equivalence
relations.
\par
Two homotopy data triples, $(\alpha,\tau,S)$ and
$(\alpha^\prime,\tau^\prime,S^\prime)$ are isomorphic if there is a
bijection from $\alpha$ to $\alpha^\prime$ which transforms $\tau$ into
$\tau^\prime$ and $S$ into $S^\prime$.
Isomorphic homotopy data triples give equivalent homotopies
\cite{Turaev:KnotsAndWords}.
\par
We define a product on homotopy data triples.
A homotopy data triple is \emph{composite} if it can be represented
as a non-trivial product of homotopy data triples.
If not, the homotopy data triple is \emph{prime}.
We show that any homotopy data triple can be represented uniquely, up to
order and isomorphism, as a product of prime homotopy data triples
(Proposition~\ref{prop:uft}).
\par
In this paper, our main aim is to show that for any composite homotopy
data triple, the study of the homotopy it gives can be reduced to the
study of the homotopies given by its prime factors.
\par 
Let $(\alpha, \tau, S)$ be a composite homotopy data triple.
Let $\mathcal{P}_R(\alpha)$ be the set of nanophrases over $\alpha$
satisfying the following conditions:
(1) each component is associated with a prime factor of
$(\alpha, \tau,S)$ and any letters in that component map to the factor;
and (2) adjacent components are associated with different prime factors.
Any nanoword over $\alpha$ can be uniquely split into a nanophrase in
$\mathcal{P}_R(\alpha)$ such that each component is non-empty.
This gives a map from nanowords over $\alpha$ to
$\mathcal{P}_R(\alpha)$ called the decomposing map.
\par
We define two equivalence relations on $\mathcal{P}_R(\alpha)$.
The equivalence relation $\sim_K$ is a restriction of homotopy to
$\mathcal{P}_R(\alpha)$.
It can be defined entirely in terms of the homotopies
associated with the prime factors of $(\alpha, \tau, S)$.
The equivalence relation $\sim_P$ is generated by $\sim_K$, reductions
(moves on nanophrases which allow us to remove empty components) and
their inverses.
\par
The key result of this paper is that there is a bijection between the
set of homotopy classes of nanowords over $\alpha$ and the set of
equivalence classes of $\mathcal{P}_R(\alpha)$ under $\sim_P$
(Theorem~\ref{thm:bijection}).
\par
We use this bijection to define a homotopy invariant of
nanowords.
Given a nanoword over $\alpha$, we apply the decomposing map to get
a nanophrase $p$ in $\mathcal{P}_R(\alpha)$.
We say that $p$ is reducible if it is equivalent under $\sim_K$ to a
nanophrase $q$ with an empty component.
By applying a reduction to $q$ we get a nanophrase which is equivalent
to $p$ under $\sim_P$ but has less components.
By repeating this process we will eventually get an irreducible
nanophrase.
We show that, irrespective of how we make the reductions, the
irreducible nanophrase that we obtain is unique up to equivalence under
$\sim_K$ (Proposition~\ref{prop:reduction-wd}).
In fact, this gives a complete homotopy invariant of the original
nanoword (Theorem~\ref{thm:invariance}). 
We generalize these results to nanophrases in
Section~\ref{sec:nanophrase-inv}.
\par
In general, calculating this invariant can be hard, because it is not
always easy to tell whether or not a nanophrase is irreducible.
Moreover, using the invariant to show that different nanowords are not
homotopic entails showing that their irreducible nanophrases are not
equivalent under $\sim_K$ which, in general, is also difficult.
However, the calculation of the invariant of a nanoword and
determination of the equivalence of this invariant for different
nanowords can be achieved just by using the homotopies given by the prime factors
of $(\alpha, \tau, S)$.
\par
A homotopy of nanophrases over $\alpha$ is \emph{equality decidable} if
there exists a finite time algorithm which given any two nanophrases
over $\alpha$, determines whether or not they are homotopic.
The homotopy is \emph{reduction decidable} if there exists a
a finite time algorithm which given any nanophrase over $\alpha$,
determines whether or not it is reducible.
\par
In the case where $S$ is the empty set, for any $\alpha$ and $\tau$, the
homotopy given by $(\alpha, \tau, S)$ is reduction and equality
decidable (Proposition~\ref{prop:emptys-decidable}).
In all other cases, determining whether a homotopy is reduction or equality
decidable is an open problem.
On the other hand, we use our complete invariant of nanowords and
nanophrases to show the following fact.
Let $(\alpha,\tau,S)$ be a composite homotopy data
 triple and let its prime factors be denoted by
 $(\alpha_i,\tau_i,S_i)$.
If the homotopies given by each $(\alpha_i,\tau_i,S_i)$ are all
reduction and equality decidable, the homotopy given by
$(\alpha,\tau,S)$ is reduction and equality decidable
(Theorem~\ref{thm:phrasedecidability}).
\par
In Section~\ref{sec:detecting} we examine several invariants which can
be used to give sufficient conditions for irreducibility.
Two of the invariants we consider, $\SoDiag$ and $\SoAll$, are new.
Both invariants are generalizations of our $S_o$ invariant for Gauss
phrases \cite{Gibson:gauss-phrase}.
The $\SoAll$ invariant is defined for all homotopies but $\SoDiag$ is
just defined for homotopies where $S$ is diagonal (that is, $S$ is of
the form $\{(a,a,a) \; | \; a \in \alpha\}$).
The $\SoDiag$ invariant is a generalization to nanophrases of Turaev's
self-linking function for nanowords \cite{Turaev:Words}.
Having written this paper we discovered that Fukunaga had independently  
generalized our $S_o$ invariant \cite{Fukunaga:gen-app}.
We give a proof that Fukunaga's invariant and our $\SoDiag$ invariant
are equivalent in Section~\ref{subsec:fukunagaequiv}.
\begin{ack}
The author is very grateful for all the help, advice and encouragement
 given to him by his supervisor Hitoshi Murakami.
He would like to thank Tomonori Fukunaga for several useful
 conversations.
The author is also very grateful for advice he received from Vladimir
 Turaev.
\end{ack}
%%%%%%%%%%%%%%%%%%%%%%%%%%%%%%%%%%%%%%%%%%%%%%%%
\section{Nanowords and nanophrases}\label{sec:definitions}
Nanowords and nanophrases were defined by Turaev in \cite{Turaev:Words}
and \cite{Turaev:KnotsAndWords}.
In this section we briefly recall various definitions from those papers.
\par
An \emph{alphabet} is a finite set and its elements are called
\emph{letters}.
For any positive integer $m$, let $\hat{m}$ denote the set
$\{1, 2, \dotsc , m\}$.
A \emph{word} on an alphabet $\mathcal{A}$ of length $m$ is a map from
$\hat{m}$ to $\mathcal{A}$.
Informally, we can think of a word $w$ on $\mathcal{A}$ is a finite
sequence of letters in $\mathcal{A}$ and we will usually write words in
this way.
So, for example, $ABCA$ is a word of length $4$ on $\{A,B,C,D\}$.
We can write this word as a map from $\hat{4}$ to $\{A,B,C,D\}$,
where $1$ maps to $A$, $2$ maps to $B$, $3$ maps to $C$ and $4$ maps to
$A$.
The empty word of length $0$ on any alphabet is written $\trivial$.
\par
An $n$-component \emph{phrase} on an alphabet $\mathcal{A}$ is a
sequence of $n$ words on $\mathcal{A}$.
We write phrases as a sequence of words, separated by `$|$'
symbols.
For example, $A|BCA|CD$ is a $3$-component phrase on the alphabet
$\{A,B,C,D\}$.
Note that in \cite{Turaev:KnotsAndWords}, Turaev always encloses phrases
in brackets but in this paper we omit them.
\par
A \emph{Gauss word} on an alphabet $\mathcal{A}$ is a word on $\mathcal{A}$
such that every letter in $\mathcal{A}$ appears exactly twice in the
word.
A \emph{Gauss phrase} on an alphabet $\mathcal{A}$ is a phrase on
$\mathcal{A}$ such that every letter in $\mathcal{A}$ appears exactly
twice in the phrase.
Equivalently, a Gauss phrase on $\mathcal{A}$ is a phrase on
$\mathcal{A}$ such that the concatenation of the words appearing in the
phrase is a Gauss word on $\mathcal{A}$.
By definition, the single component appearing in a $1$-component Gauss
phrase is a Gauss word.
\par
Let $\alpha$ be a non-empty set.
An $\alpha$-alphabet is an alphabet $\mathcal{A}$ which has an
associated map from $\mathcal{A}$ to $\alpha$.
This map is called a \emph{projection}.
Following Turaev, we will often write $\xType{X}$ for the image of $X$
under the projection, where $X$ is a letter in $\mathcal{A}$.
An \emph{isomorphism} of $\alpha$-alphabets $\mathcal{A}_1$ and
$\mathcal{A}_2$ is a bijection $f$ from $\mathcal{A}_1$ to
$\mathcal{A}_2$ such that $\xType{f(X)}$ is equal to $\xType{X}$ for all
letters $X$ in $\mathcal{A}_1$.
\par
A {nanoword over $\alpha$} is a pair $(\mathcal{A},w)$ where
$\mathcal{A}$ is an $\alpha$-alphabet and $w$ is a Gauss word on
$\mathcal{A}$.
We write $\mathcal{N}(\alpha)$ for the set of nanowords over $\alpha$.
\par
An $n$-component \emph{nanophrase over $\alpha$} is a pair
$(\mathcal{A},p)$ where $\mathcal{A}$ is an $\alpha$-alphabet and 
$p$ is an $n$-component Gauss phrase on $\mathcal{A}$.
There is a unique nanophrase with $0$ components which we write
$\zerocomp$.
Note that in \cite{Turaev:KnotsAndWords} Turaev uses $\trivial$ to
denote an $0$-component nanophrase and $(\trivial)$ to represent a
nanophrase with a single empty component.
We always use $\trivial$ to represent an empty word and therefore,
according to context, it also represents an empty Gauss word or a
nanophrase with a single empty component.
\par
We write $\mathcal{P}_n(\alpha)$ for the set of $n$-component
nanophrases over $\alpha$
and write $\mathcal{P}(\alpha)$ for the set of all nanophrases over
$\alpha$.
Thus, $\mathcal{P}(\alpha)$ is the union of the
$\mathcal{P}_n(\alpha)$ with $n$ running from $0$ to infinity.
\par
There is a natural bijection between nanowords over $\alpha$ and
$1$-component nanophrases over $\alpha$ induced by mapping a Gauss word
$w$ to a $1$-component Gauss phrase containing a single component $w$.
In this paper, we will consider nanowords over $\alpha$ and
$1$-component nanophrases over $\alpha$ to be the same.
Thus $\mathcal{N}(\alpha)$ and $\mathcal{P}_1(\alpha)$ represent the
same set.
\par
Two nanophrases over $\alpha$, $(\mathcal{A}_1,p_1)$ and
$(\mathcal{A}_2,p_2)$, are \emph{isomorphic} if there exists an
isomorphism of $\alpha$-alphabets $f$ from $\mathcal{A}_1$ to
$\mathcal{A}_2$ such that $f$
applied letterwise to the $i$th component of $p_1$ gives the $i$th
component of $p_2$ for all $i$.
\par
Let $\tau$ be an involution on $\alpha$ (that is, $\tau(\tau(a))$ is equal
to $a$ for all $a$ in $\alpha$).
Let $S$ be a subset of $\alpha \times \alpha \times \alpha$.
We say that $S$ is \emph{diagonal} if
$S = \{(a,a,a) \; | \; a \in \alpha \}$.
\par
Turaev defined \emph{homotopy moves} for nanophrases on $\alpha$.
In moves on nanowords, the lower case letters $x$, $y$, $z$ and $t$
represent arbitrary sequences of letters, possibly including one or more
`$|$' symbols, so that the phrases on each side of the move are Gauss
phrases. The moves are
\par
\quad
move H1: for any $\xType{A}$,
\par
\quad\quad\quad
\quad\quad\quad
$(\mathcal{A},xAAy) \longleftrightarrow (\mathcal{A}-\{A\},xy)$
\par
\quad
move H2: if $\tau(\xType{A}) = \xType{B}$,
\par
\quad\quad\quad
\quad\quad\quad
$(\mathcal{A},xAByBAz) \longleftrightarrow (\mathcal{A}-\{A,B\},xyz)$ 
\par
\quad
move H3: if $(\xType{A},\xType{B},\xType{C}) \in S$,
\par
\quad\quad\quad
\quad\quad\quad
$(\mathcal{A},xAByACzBCt) \longleftrightarrow (\mathcal{A},xBAyCAzCBt)$
\\
where the projections of the $\alpha$-alphabets on the right hand side
of the moves H1 and H2 are restrictions of the corresponding
$\alpha$-alphabets on the left hand side. 
\par
Two nanophrases are \emph{homotopic} is there exists a finite sequence
of homotopy moves and isomorphisms which transform one into the other.
As none of the moves add or remove components, the number of
components of a nanophrase is a homotopy invariant.
\par
Note that if we change $\alpha$, $\tau$ or $S$ we may get a different
kind of homotopy.
We call the triple $(\alpha,\tau,S)$ a \emph{homotopy data triple} (note
that Turaev uses the term \emph{homotopy data} \cite{Turaev:Words}).
\par
Two homotopy data triples, $(\alpha,\tau,S)$ and
$(\alpha^\prime,\tau^\prime,S^\prime)$ are \emph{isomorphic} if there
exists a bijection $f$ from $\alpha$ to $\alpha^\prime$ such that 
$\tau^\prime \circ f$ is equivalent to $f \circ \tau$
and $(a,b,c)$ is in $S$ if and only if $(f(a),f(b),f(c))$ is in
$S^\prime$.
Such an isomorphism induces a map from $\mathcal{P}(\alpha)$ to
$\mathcal{P}(\alpha^\prime)$ which is homotopy preserving
\cite{Turaev:KnotsAndWords}.
Thus the homotopies given by isomorphic homotopy data triples are
equivalent.
\par
Let $p$ be a nanophrase.
We write $\nc(p)$ for the number of components in $p$ and define $c_i(p)$
to be the $i$th component of $p$.
The \emph{rank} of $p$ is the number of distinct letters appearing in $p$.
We write it $\rank(p)$.
The \emph{homotopy rank} of $p$, which is written $\hr(p)$, is the
minimum rank attained by a nanophrase in the homotopy class of $p$.
We say that a nanophrase $p$ is \emph{minimal} if $\rank(p)$ is equal to
$\hr(p)$. 
\par
A nanoword is \emph{contractible} if it is homotopic to the empty
word $\trivial$.
Thus, for a contractible nanoword $w$, $\hr(w)$ is $0$.
%%%%%%%%%%%%%%%%%%%%%%%%%%%%%%%%%%%%%%%%%%%%%%%%
\section{Some lemmas on nanophrases}\label{sec:nanophrases}
We define a map $\chi$ from $\mathcal{P}(\alpha)$ to
$\mathcal{N}(\alpha)$ as follows.
For a nanophrase $p$ in $\mathcal{P}(\alpha)$ we define $\chi(p)$ to be
the nanoword obtained by concatenating the components of $p$.
That is $w_1|w_2|\dotsc|w_n$ is mapped to $w_1w_2 \dotsc w_n$.
By definition $\chi(\zerocomp)$ is $\trivial$.
We call $\chi$ the \emph{concatenating map}.
The following lemma is Lemma 4.3 of \cite{Fukunaga:nanophrases}.
\begin{lem}[Fukunaga]\label{lem:concatenation}
Let $p_1$ and $p_2$ be nanophrases in $\mathcal{P}(\alpha)$.
If $p_1 \sim p_2$, then $\chi(p_1) \sim \chi(p_2)$.
\end{lem}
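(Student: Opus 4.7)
The plan is to reduce the claim to a check on each generator of $\sim$. Two nanophrases are homotopic precisely when they are connected by a finite sequence of the moves H1, H2, H3 and isomorphisms of $\alpha$-alphabets, so by a straightforward induction on the length of such a sequence it suffices to treat the case in which $p_1$ and $p_2$ differ by a single step of one of these kinds.

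For isomorphisms the conclusion is immediate: an $\alpha$-alphabet isomorphism $f$ acts letterwise on each component, and hence commutes with the concatenation of components, so $\chi$ sends isomorphic nanophrases to isomorphic (and hence homotopic) nanowords. For the moves H1, H2, H3, the key observation is that in the statements recalled in Section~\ref{sec:definitions} the auxiliary sequences $x$, $y$, $z$, $t$ are explicitly allowed to contain `$|$' symbols. Applying $\chi$ simply deletes all `$|$' symbols; this converts each of $x$, $y$, $z$, $t$ into an ordinary letter-sequence while leaving the distinguished letters $A$, $B$, $C$ in their relative positions. Because $\chi$ does not alter the underlying $\alpha$-alphabet or its projection, the side conditions $\tau(\xType{A}) = \xType{B}$ in H2 and $(\xType{A},\xType{B},\xType{C}) \in S$ in H3 transfer verbatim. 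Thus a single application of one of H1, H2, H3 relating $p_1$ to $p_2$ descends to a single application of the same move relating $\chi(p_1)$ to $\chi(p_2)$, which gives the desired homotopy.

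I do not expect any real obstacle. The proof is essentially a formal observation once the convention that $x$, $y$, $z$, $t$ may include `$|$' symbols is made explicit. The only minor point worth mentioning is that components may be empty, so a move could occur entirely within one component or straddle several `$|$' symbols; both situations are handled uniformly by the argument above.
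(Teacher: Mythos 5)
Your argument is correct. The paper itself does not prove this lemma; it cites it as Lemma~4.3 of Fukunaga's paper and remarks only that the extension from diagonal $S$ to general $S$ is trivial. Your direct proof — reducing to a single isomorphism or move, noting that $\chi$ merely deletes the `$|$' separators so that the patterns $xAAy$, $xAByBAz$, $xAByACzBCt$ and their side conditions are preserved verbatim, hence a single H1/H2/H3 move on nanophrases descends to a single move of the same type on nanowords — is the standard and correct argument, and is presumably essentially Fukunaga's. One small point you could make explicit: each of H1, H2, H3 genuinely produces a move on the concatenated nanowords (rather than the identity), since the letters $A$, $B$, $C$ and the surrounding sequences remain in the same relative order after removing the separators, so there is no degenerate case to worry about.
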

Note that in \cite{Fukunaga:nanophrases} the lemma above was stated in
the case where $S$ is diagonal.
However, it trivially extends to the general case.
\par
Let $O$ be a subset of $\hat{n}$.
Then we define $\mathcal{P}_n(\alpha,O)$ to be the subset of
$\mathcal{P}_n(\alpha)$ 
where $p$ is in $\mathcal{P}_n(\alpha,O)$ if $c_i(p)$ is $\trivial$ for
all $i$ in $O$.
Let $\sim_O$ be the equivalence relation generated by isomorphisms and
by homotopy moves which relate elements in $\mathcal{P}_n(\alpha,O)$.
Clearly $p \sim_O p^\prime$ implies $p \sim p^\prime$.
\par
We define a map $f_O$ from $\mathcal{P}_n(\alpha)$ to
 $\mathcal{P}_n(\alpha,O)$ by saying that $f_O(p)$ is the nanophrase
 derived from $p$ by deleting all letters that appear at least once
 in any component with index in $O$.
\begin{lem}\label{lem:fpreserveshomotopy}
Let $p$ and $p^\prime$ be two nanophrases in $\mathcal{P}_n(\alpha)$.
If $p \sim p^\prime$ then $f_O(p) \sim_O f_O(p^\prime)$.
\end{lem}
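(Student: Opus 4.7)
The plan is to reduce to the single-step case: since $\sim$ is generated by $\alpha$-alphabet isomorphisms together with the homotopy moves H1, H2, H3, it suffices to verify $f_O(p) \sim_O f_O(p')$ whenever $p'$ arises from $p$ by one such generator. Isomorphisms are straightforward, since an isomorphism of $\alpha$-alphabets restricts to a bijection on the complement of the set of letters deleted by $f_O$, and this restriction realises $f_O(p) \to f_O(p')$ as an isomorphism of $\alpha$-alphabets.

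For the three moves, the key structural observation is that every depicted occurrence of a letter sits inside a short consecutive block (the block $AA$ in H1; the blocks $AB$ and $BA$ in H2; the blocks $AB$, $AC$ and $BC$ in H3) which contains no $|$ symbol and so lies entirely inside a single component. Hence if such a block lies in a component indexed by $O$, all of the depicted letters in it are automatically among those removed by $f_O$. I would then split into two cases along this observation. In the case where none of the depicted blocks lies in an $O$-component, no depicted letter is deleted, and the sub-words $x$, $y$, $z$, $t$ reduce identically on both sides because the component-membership of each non-depicted occurrence is unchanged by the move. The very same move applied at the same position then transforms $f_O(p)$ into $f_O(p')$, and since $f_O$ always lands in $\mathcal{P}_n(\alpha,O)$ this is a valid generator of $\sim_O$.

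In the remaining case, at least one depicted block lies in an $O$-component, and I claim $f_O(p) = f_O(p')$ directly. For H1, the block $AA$ being in $O$ forces $A$ to be deleted, collapsing $p$ under $f_O$ to the same nanophrase as $p'$; for H2, either depicted block being in $O$ forces both $A$ and $B$ to be deleted, with the same conclusion. The main obstacle is the bookkeeping for H3, where one has to enumerate which subset of $\{A,B,C\}$ is deleted and verify that the residual strings from the depicted pieces $AB$, $AC$, $BC$ of $p$ and $BA$, $CA$, $CB$ of $p'$ coincide. The structural fact that makes this work is that the two occurrences of each of $A$, $B$, $C$ lie in the same pair of blocks in $p$ and $p'$ (only their internal order is swapped), so the deletion status of each depicted letter is identical on both sides, and the residual contributions match block by block; combined with the identical reduction of $x$, $y$, $z$, $t$ this gives equality of $f_O(p)$ and $f_O(p')$.
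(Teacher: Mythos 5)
Your proposal is correct and follows essentially the same strategy as the paper's proof: reduce to a single isomorphism or move, observe that in H1, H2 and H3 each depicted occurrence sits in a small block inside one component (so any deletion affecting a block deletes all its letters), and split into the case where no depicted letter is deleted (yielding a corresponding move in $\mathcal{P}_n(\alpha,O)$) versus the case where some letter is deleted (yielding equality of $f_O(p)$ and $f_O(p')$). The paper phrases the H3 analysis via the count $k \in \{0,2,3\}$ of deleted letters, while you phrase it block by block, but the underlying observation — that the three letters pair up in blocks so it is impossible to delete exactly one of them, and the residual strings agree since only internal block order is swapped — is the same.
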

\begin{proof}
It is enough to check that if $p$ and $p^\prime$ are related by a single
 isomorphism or homotopy move then $f_O(p) \sim_O f_O(p^\prime)$.
\par
If $p$ and $p^\prime$ are isomorphic then $f_O(p)$ and $f_O(p^\prime)$ are
 also isomorphic.
\par
Suppose that $p$ and $p^\prime$ are related by an H1 move.
We assume that the move removes a letter $A$ from $p$.
If $A$ appeared in a component with index in $O$, then $A$ will not
 appear in $f_O(p)$ and so $f_O(p)$ is $f_O(p^\prime)$.
Otherwise, $f_O(p)$ and $f_O(p^\prime)$ are related by an H1 move.
\par
Now suppose that $p$ and $p^\prime$ are related by an H2 move.
We assume that the move removes letters $A$ and $B$ from $p$.
If $A$ appeared in a component with index in $O$, then $B$ also appears
 in such a component and so $A$ and $B$ will both be deleted in $f_O(p)$.
Thus $f_O(p)$ is $f_O(p^\prime)$.
If $A$ does not appear in a component with index in $O$, then neither
 does $B$ and $f_O(p)$ and $f_O(p^\prime)$ are related by an H2 move.
\par
Finally suppose that $p$ and $p^\prime$ are related by an H3 move.
The move affects three letters.
Let $k$ be the number of letters affected by the move which appear in a
 component with index in $O$.
Note that if one of the letters affected by the move appears in a
 component with index in $O$, then at least one of the other two letters
 must also appear in the same component.
Thus $k$ must be $0$, $2$ or $3$.
If $k$ is $2$ or $3$ then $f_O(p)$ is $f_O(p^\prime)$.
If $k$ is $0$ then $f_O(p)$ and $f_O(p^\prime)$ are related by an H3
 move.
\end{proof}
\begin{lem}\label{lem:embedding}
Let $p$ and $p^\prime$ be two nanophrases in $\mathcal{P}_n(\alpha,O)$.
If $p \sim p^\prime$ then $p \sim_O p^\prime$.
\end{lem}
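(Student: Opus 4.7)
The plan is to deduce this almost immediately from Lemma~\ref{lem:fpreserveshomotopy}, by observing that $f_O$ acts as the identity on the subset $\mathcal{P}_n(\alpha,O)$.

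First I would check the following simple fact: if $q \in \mathcal{P}_n(\alpha,O)$, then $f_O(q) = q$. Indeed, by definition of $\mathcal{P}_n(\alpha,O)$, each component $c_i(q)$ with $i \in O$ is the empty word $\trivial$, so no letter of the underlying $\alpha$-alphabet occurs in any component indexed by $O$. Hence the set of letters that $f_O$ deletes is empty, and $f_O$ leaves $q$ unchanged.

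Applying this to both $p$ and $p^\prime$ gives $f_O(p) = p$ and $f_O(p^\prime) = p^\prime$. Since $p \sim p^\prime$ by hypothesis, Lemma~\ref{lem:fpreserveshomotopy} yields $f_O(p) \sim_O f_O(p^\prime)$, which is exactly $p \sim_O p^\prime$.

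There is no real obstacle here; the lemma is essentially a corollary of Lemma~\ref{lem:fpreserveshomotopy}, with all the genuine work (namely, the case analysis on each type of homotopy move) already having been carried out in the proof of that lemma. The only thing to verify is the trivial observation that $f_O$ restricts to the identity on $\mathcal{P}_n(\alpha,O)$, which follows directly from the definitions.
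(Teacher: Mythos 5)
Your proof is correct and takes essentially the same approach as the paper: the paper's own proof applies $f_O$ to the entire sequence of nanophrases witnessing $p \sim p'$, which implicitly uses both the content of Lemma~\ref{lem:fpreserveshomotopy} for the intermediate steps and the fact that $f_O$ fixes the endpoints $p$ and $p'$. You have simply made explicit the observation that $f_O$ restricts to the identity on $\mathcal{P}_n(\alpha,O)$ and then cited Lemma~\ref{lem:fpreserveshomotopy} directly, which is a slightly cleaner packaging of the same argument.
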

\begin{proof}
If $p \sim p^\prime$ then there exists a sequence of nanophrases
 in $\mathcal{P}_n(\alpha)$, where each consecutive pair in the sequence
 is related by an isomorphism or a single homotopy move.
By applying $f_O$ to each nanophrase in the sequence, we get a sequence of
 nanophrases in $\mathcal{P}_n(\alpha,O)$ showing $p \sim_O p^\prime$. 
\end{proof}
\begin{lem}\label{lem:simplify}
Let $p$ and $p^\prime$ be two nanophrases in $\mathcal{P}_n(\alpha)$ and
 let $O$ be a subset of the set of indices of empty components in
 $p^\prime$. 
If $p \sim p^\prime$, then  $p \sim f_O(p)$.
\end{lem}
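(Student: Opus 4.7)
The plan is to combine Lemma~\ref{lem:fpreserveshomotopy} with the observation that $f_O$ acts trivially on any nanophrase whose components indexed by $O$ are already empty. Since $p'$ has $c_i(p')=\trivial$ for every $i\in O$, no letter of $p'$ appears in a component with index in $O$, so $f_O$ deletes nothing from $p'$ and $f_O(p')=p'$.

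Now, starting from the hypothesis $p\sim p'$, I would apply Lemma~\ref{lem:fpreserveshomotopy} to conclude $f_O(p)\sim_O f_O(p')$, which by the previous observation reads $f_O(p)\sim_O p'$. Since the relation $\sim_O$ is by construction a sub-relation of $\sim$ (as noted in the paragraph introducing $\sim_O$: ``Clearly $p\sim_O p^\prime$ implies $p\sim p^\prime$''), this yields $f_O(p)\sim p'$, and composing with $p\sim p'$ gives $p\sim f_O(p)$.

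There is really no obstacle here: the argument is a two-line chain once the identity $f_O(p')=p'$ is noted. The only thing to double-check carefully is this identity, namely that the definition of $f_O$ (deleting every letter that appears \emph{at least once} in some component with index in $O$) does nothing to $p'$; this is immediate because if a component of $p'$ indexed by $O$ is empty, no letter occurs there to trigger a deletion. Thus the lemma is a direct corollary of Lemma~\ref{lem:fpreserveshomotopy} together with the containment $\sim_O\,\subseteq\,\sim$.
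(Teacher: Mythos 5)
Your proof is correct and follows essentially the same chain as the paper's: apply Lemma~\ref{lem:fpreserveshomotopy} to get $f_O(p)\sim_O f_O(p')$, note that $f_O(p')=p'$ by the emptiness hypothesis on $O$, and compose with $p\sim p'$ using $\sim_O\,\subseteq\,\sim$. You spelled out the justification for $f_O(p')=p'$ a bit more explicitly than the paper, but the argument is the same.
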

\begin{proof}
We have
\begin{equation*}
f_O(p) \sim_O f_O(p^\prime) = p^\prime \sim p,
\end{equation*}
where the first relation is given by Lemma~\ref{lem:fpreserveshomotopy},
 the second relation follows from the definition of $f_O$ and the third
 relation is the assumption. 
\end{proof}
\begin{lem}\label{lem:hr-emptycomponent}
Let $p$ be a nanophrase in $\mathcal{P}_n(\alpha)$ such that 
$\rank(p) = \hr(p)$.
Let $p^\prime$ be a nanophrase in $\mathcal{P}_n(\alpha)$ such that the
 $i$th component of $p^\prime$ is empty.
If $p \sim p^\prime$, then the $i$th component of $p$ is empty.
\end{lem}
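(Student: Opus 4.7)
The plan is to argue by contradiction using Lemma~\ref{lem:simplify}, which packages exactly the kind of simplification we need. Set $O = \{i\}$. Since $p \sim p'$ and the $i$th component of $p'$ is empty, the hypothesis of Lemma~\ref{lem:simplify} is satisfied, so we obtain $p \sim f_O(p)$.

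Next I would analyze the rank of $f_O(p)$. Assume for contradiction that the $i$th component $c_i(p)$ is non-empty, and pick a letter $A$ appearing in $c_i(p)$. By the definition of $f_O$, every letter that appears at least once in a component indexed by $O = \{i\}$ is deleted, so in particular $A$ (and its other occurrence) is removed in passing from $p$ to $f_O(p)$. Hence $\rank(f_O(p)) < \rank(p)$.

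Combining this with $p \sim f_O(p)$ and the assumption $\rank(p) = \hr(p)$ yields
\begin{equation*}
\hr(p) \leq \rank(f_O(p)) < \rank(p) = \hr(p),
\end{equation*}
which is the desired contradiction. Therefore $c_i(p)$ must be empty.

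I do not expect any real obstacle here: the content of the lemma is essentially already in Lemma~\ref{lem:simplify}, and the only step not handed to us is the observation that deleting the letters of a non-empty component strictly decreases the rank, which is immediate from the definition of $f_O$ together with the fact that letters in a Gauss phrase occur exactly twice (so a non-empty component always contains at least one full letter whose deletion is genuine). The argument does not even need to analyze whether the two occurrences of $A$ both lie in $c_i(p)$ or only one does.
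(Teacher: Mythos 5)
Your proof is correct and takes essentially the same route as the paper: set $O = \{i\}$, invoke Lemma~\ref{lem:simplify} to get $p \sim f_O(p)$, observe that a non-empty $i$th component forces $\rank(f_O(p)) < \rank(p)$, and derive the contradiction $\hr(p) \leq \rank(f_O(p)) < \rank(p) = \hr(p)$. The only difference is that you spell out the rank-drop step a bit more explicitly, which the paper leaves as an obvious remark.
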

\begin{proof}
Suppose the $i$th component of $p$ is not empty.
Let $O$ be the set $\{i\}$.
Then by Lemma~\ref{lem:simplify}, $p \sim f_O(p)$.
Now $\rank(f_O(p))$ is less than $\rank(p)$, as the $i$th component of $p$ is
 not empty.
Then we have
\begin{equation*}
\hr(p) \leq \rank(f_O(p)) < \rank(p) = \hr(p),
\end{equation*}
which is a contradiction.
Thus the $i$th component of $p$ must be empty.
\end{proof}
We write $O(p)$ for the set of indices of components that are empty in
$p$.
\begin{lem}
Let $p$ and $p^\prime$ be two nanophrases in $\mathcal{P}_n(\alpha)$.
If $p \sim p^\prime$ and $\rank(p) = \rank(p^\prime) = \hr(p)$ then
 $O(p) = O(p^\prime)$.  
\end{lem}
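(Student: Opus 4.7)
The plan is to apply Lemma~\ref{lem:hr-emptycomponent} twice, once in each direction, using the symmetry provided by $p \sim p^\prime$.

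First I would establish that both $p$ and $p^\prime$ are minimal. Since $p \sim p^\prime$, every nanophrase in the homotopy class of $p$ is also in the homotopy class of $p^\prime$, so $\hr(p) = \hr(p^\prime)$. Combined with the hypothesis $\rank(p) = \rank(p^\prime) = \hr(p)$, this yields $\rank(p^\prime) = \hr(p^\prime)$ as well.

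Next, I would show the inclusion $O(p^\prime) \subseteq O(p)$. Pick any $i \in O(p^\prime)$, so the $i$th component of $p^\prime$ is empty. Since $p \sim p^\prime$ and $\rank(p) = \hr(p)$, Lemma~\ref{lem:hr-emptycomponent} applied to the pair $(p, p^\prime)$ with this index $i$ gives that the $i$th component of $p$ is empty, i.e., $i \in O(p)$. By symmetry, using $\rank(p^\prime) = \hr(p^\prime)$ established above, the same lemma applied to $(p^\prime, p)$ gives $O(p) \subseteq O(p^\prime)$. Combining the two inclusions yields $O(p) = O(p^\prime)$.

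There is no real obstacle here: the statement is essentially a symmetrized restatement of Lemma~\ref{lem:hr-emptycomponent}, and the only small observation needed is that homotopy rank is an invariant of the homotopy class, so that the minimality hypothesis on $p$ transfers automatically to $p^\prime$.
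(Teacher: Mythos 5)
Your proof is correct and follows essentially the same route as the paper's: apply Lemma~\ref{lem:hr-emptycomponent} once to get $O(p^\prime) \subseteq O(p)$, then by symmetry get the reverse inclusion. The only difference is that you explicitly verify $\rank(p^\prime) = \hr(p^\prime)$ via the homotopy invariance of $\hr$, which the paper leaves implicit behind the phrase ``by symmetry''; this is a helpful clarification rather than a different argument.
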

\begin{proof}
For any element $i$ in $O(p^\prime)$, Lemma~\ref{lem:hr-emptycomponent}
 implies $i$ is in $O(p)$.
In other words $O(p^\prime) \subseteq O(p)$.
By symmetry, we get the opposite inclusion and so $O(p) = O(p^\prime)$.
\end{proof} 
Let $p$ be an $n$-component nanophrase and let $O$ be a subset of
$\hat{n}$ of size $\abs{O}$.
Define $x(p,O)$ to be the $n - \abs{O}$ component nanophrase derived
from $p$ by deleting all 
components with index appearing in $O$ and all letters which appear in
those components.
\begin{lem}\label{lem:subphraseinvariance}
Let $p$ and $p^\prime$ be two nanophrases in $\mathcal{P}_n(\alpha)$.
If $p \sim p^\prime$ then $x(p,O) \sim x(p^\prime,O)$. 
\end{lem}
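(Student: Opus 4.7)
The plan is to reduce the claim to Lemma~\ref{lem:fpreserveshomotopy} by composing with the operation that deletes the empty components at indices in $O$.

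First, I would apply Lemma~\ref{lem:fpreserveshomotopy} to obtain $f_O(p) \sim_O f_O(p')$. By construction, $x(p,O)$ is obtained from $f_O(p)$ by deleting the components whose indices lie in $O$, all of which are empty by the definition of $f_O$; similarly $x(p',O)$ is obtained from $f_O(p')$. So it suffices to show: whenever $q, q' \in \mathcal{P}_n(\alpha, O)$ satisfy $q \sim_O q'$, the nanophrases $r(q)$ and $r(q')$ in $\mathcal{P}_{n-\abs{O}}(\alpha)$ obtained by erasing the empty components at indices in $O$ are homotopic.

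For this step I would unfold $\sim_O$ into a finite sequence of isomorphisms and homotopy moves relating elements of $\mathcal{P}_n(\alpha,O)$, and check term by term that each step descends under $r$. An isomorphism between two phrases whose $O$-indexed components are both empty restricts directly to an isomorphism after deletion. For an H1, H2 or H3 move, the letters the move acts on must all lie in non-$O$ components, since the $O$-components are empty on both sides of the move; deleting those empty components therefore only erases some of the component separators flanking empty slots, and leaves the subword patterns that define the move intact. Thus the same move realizes $r(q) \sim r(q')$, and chaining gives $x(p,O) \sim x(p',O)$.

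I do not expect a serious obstacle here, since the argument parallels the case analysis in the proof of Lemma~\ref{lem:fpreserveshomotopy}. The only delicate point is the combinatorial bookkeeping around $r$ when several $O$-indexed components are consecutive, or appear at the very start or end of the phrase: one must verify that deletion does not accidentally merge or split adjacent non-$O$ components in a way that disturbs the matching of letters required by the moves. Because each $O$-indexed component is empty and contributes no letters between its flanking separators, this check is routine.
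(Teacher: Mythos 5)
Your proposal is correct and follows the same strategy as the paper: factor the claim through Lemma~\ref{lem:fpreserveshomotopy}, observe that $x(p,O) = x(f_O(p),O)$, and then verify that erasing the empty $O$-indexed components sends a $\sim_O$-chain to a homotopy chain. The paper states this last implication without justification ("Let $p_1$ and $p_2$ be two nanophrases in $\mathcal{P}_n(\alpha,O)$. If $p_1 \sim_O p_2$ then $x(p_1,O) \sim x(p_2,O)$"), so your unpacking of the move-by-move descent, including the check that H1/H2/H3 patterns survive the deletion of empty separators, fills in exactly the detail the paper leaves implicit.
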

\begin{proof}
Let $p_1$ and $p_2$ be two nanophrases in $\mathcal{P}_n(\alpha,O)$.
If $p_1 \sim_O p_2$ then $x(p_1,O) \sim x(p_2,O)$.
The result then follows from Lemma~\ref{lem:fpreserveshomotopy}.
\end{proof}
Lemma \ref{lem:subphraseinvariance} corresponds to the well-known fact
that a sub-link is an invariant of a link.
%%%%%%%%%%%%%%%%%%%%%%%%%%%%%%%%%%%%%%%%%%%%%%%%
\section{Factorizations of homotopy data triples}\label{sec:factoring-triples}
Let $(\alpha,\tau,S)$ be a homotopy data triple.
From now on, we will assume that $\alpha$ is finite.
Let $\beta$ be a $\tau$-invariant subset of $\alpha$ and $\gamma$ be the
set $\alpha - \beta$.
Note that $\gamma$ is also $\tau$-invariant.
We define $\tau_\beta$ to be the restriction of $\tau$ to $\beta$ and
$\tau_\gamma$ to be the restriction of $\tau$ to $\gamma$.
We define $S_\beta$ to be the intersection of $S$ and
$\beta \times \beta \times \beta$ and $S_\gamma$ to be the intersection
of $S$ and $\gamma \times \gamma \times \gamma$.
If every element of $S$ appears either in $S_\beta$ or $S_\gamma$ (that
is, $S$ is equal to $S_\beta \cup S_\gamma$), we say that
$(\beta,\tau_\beta,S_\beta)$ is a \emph{factor} of $(\alpha,\tau,S)$.
Note that if $(\beta,\tau_\beta,S_\beta)$ is a factor of
$(\alpha,\tau,S)$, so is $(\gamma,\tau_\gamma,S_\gamma)$.
\par
Any homotopy data triple $(\alpha,\tau,S)$ is a factor of itself, as
$\alpha$ is a $\tau$-invariant subset of $\alpha$ and $S$ is contained
in $\alpha \times \alpha \times \alpha$.
The homotopy data triple $(\trivial,\tau_\trivial,\trivial)$, where
$\tau_\trivial$ is the empty map, is also a factor of any homotopy data
triple.
We note however that $(\trivial,\tau_\trivial,\trivial)$ does not give a
homotopy because the concept of an $\alpha$-alphabet is not defined when
$\alpha$ is $\trivial$.
\par
A factor $(\beta,\tau_\beta,S_\beta)$ of a homotopy data triple
$(\alpha,\tau,S)$ is said to be a \emph{proper factor} of
$(\alpha,\tau,S)$ if $\beta$ is a non-empty proper subset of $\alpha$.
We say that a homotopy data triple $(\alpha,\tau,S)$ is \emph{composite}
if it has a proper factor.
If a homotopy data triple $(\alpha,\tau,S)$ has no proper factors and
$\alpha$ is non-empty, then we say that the homotopy data triple is
\emph{prime}.
\begin{ex}\label{ex:compositehdt}
Let $\alpha$ be the set $\{a,b,c,d\}$. 
Let $\tau$ be the map which swaps $a$ with $b$ and $c$ with $d$. 
Let $S$ be the set $\{(a,b,a),(c,d,c)\}$.
Now $\alpha$ has two orbits under $\tau$, so if $(\alpha,\tau,S)$ has a
 proper factor $(\beta,\tau_\beta,S_\beta)$, 
 then $\beta$ must contain one of those orbits.
Without loss of generality, we assume $\beta$ is the set $\{a,b\}$.
Set $\gamma$ to be the set $\{c,d\}$.
Then $\tau_\beta$, the restriction of $\tau$ to $\beta$, is the map which
 swaps $a$ with $b$.
The restriction of $\tau$ to $\gamma$, $\tau_\gamma$, is the map that
 swaps $c$ with $d$.
The set $S_\beta$, the intersection of $S$ and
$\beta \times \beta \times \beta$, is $\{(a,b,a)\}$.
The set $S_\gamma$, the intersection of $S$ and
$\gamma \times \gamma \times \gamma$, is $\{(c,d,c)\}$.
Now $S$ is equal to $S_\beta \cup S_\gamma$, so
 $(\beta,\tau_\beta,S_\beta)$ is a proper factor of $(\alpha,\tau,S)$.
Thus $(\alpha,\tau,S)$ is composite.
\end{ex}
\begin{ex}
Let $\alpha$ and $\tau$ be the same as in
 Example~\ref{ex:compositehdt} and let $S$ be the set
 $\{(a,b,c),(b,c,d)\}$.
As before, $\alpha$ has two orbits under $\tau$, so if $(\alpha,\tau,S)$ has a
 proper factor $(\beta,\tau_\beta,S_\beta)$, 
 then $\beta$ must contain one of those orbits.
We assume $\beta$ is the set $\{a,b\}$.
Then $\gamma$, $\tau_\beta$ and $\tau_\gamma$ are the same as in
 Example~\ref{ex:compositehdt}.
However, in this case both $S_\beta$ and $S_\gamma$ are empty.
Clearly $S$ is not equal to $S_\beta \cup S_\gamma$, so
 $(\beta,\tau_\beta,S_\beta)$ is not a factor of $(\alpha,\tau,S)$.
Thus $(\alpha,\tau,S)$ is prime.
\end{ex}
We remark that if $\alpha$ has only one orbit under $\tau$ then the
homotopy data triple $(\alpha,\tau,S)$ must be prime.
\par
In the following example we consider the case where $S$ is diagonal.
\begin{ex}\label{ex:diagonalshdt}
Let $\alpha_G$ be the set $\{a\}$, let $\tau_G$ be the identity map and
 let $S_G$ be the set $\{(a,a,a)\}$.
Let $\alpha_F$ be the set $\{a,b\}$, let $\tau_F$ be the map swapping
 $a$ and $b$, and let $S_F$ be the set $\{(a,a,a),(b,b,b)\}$.
Then both $(\alpha_G,\tau_G,S_G)$ and $(\alpha_F,\tau_F,S_F)$ are
 prime.
We note that $(\alpha_G,\tau_G,S_G)$ gives the open Gauss phrase
 homotopy and the homotopy data triple $(\alpha_F,\tau_F,S_F)$ gives the
 open flat virtual link homotopy (see, for example,
 \cite{Gibson:gauss-phrase} or \cite{Turaev:KnotsAndWords}).
\par
Consider the general case of a homotopy data triple $(\alpha,\tau,S)$
 where $S$ is diagonal.
It is easy to check that if $(\alpha,\tau,S)$ is prime, then it is
 isomorphic to $(\alpha_G,\tau_G,S_G)$ or $(\alpha_F,\tau_F,S_F)$.
Otherwise, $(\alpha,\tau,S)$ is composite and has a proper factor which
 is isomorphic to either $(\alpha_G,\tau_G,S_G)$ or
 $(\alpha_F,\tau_F,S_F)$.
\end{ex}
We now define the product of two homotopy data triples.
Let $(\alpha_1,\tau_1,S_1)$ and $(\alpha_2,\tau_2,S_2)$ be homotopy data
triples.
Let $\alpha$ be the subset of $\{1,2\} \times (\alpha_1 \cup \alpha_2)$
given by
\begin{equation*}
\{(1,a)\;|\;a\in\alpha_1\} \cup \{(2,a)\;|\;a\in\alpha_2\}.
\end{equation*}
We define $\tau$ to be the involution on $\alpha$ where $(i,a)$ maps to
$(i,\tau_i(a))$ for each $(i,a)$ in $\alpha$.
We define $S$ to be the subset of $\alpha \times \alpha \times \alpha$
given by
\begin{equation*}
\{((1,a),(1,b),(1,c))\;|\;(a,b,c)\in S_1\} \cup 
\{((2,a),(2,b),(2,c))\;|\;(a,b,c)\in S_2\}.
\end{equation*}
Then the product of $(\alpha_1,\tau_1,S_1)$ and $(\alpha_2,\tau_2,S_2)$
is $(\alpha,\tau,S)$.
\par
We note that if $\alpha_1$ and $\alpha_2$ are disjoint, then the product
of $(\alpha_1,\tau_1,S_1)$ and $(\alpha_2,\tau_2,S_2)$ is isomorphic to
$(\alpha_1 \cup \alpha_2, \tau, S_1 \cup S_2)$ where $\tau(a)$ is
defined to be $\tau_1(a)$ if $a$ is in $\alpha_1$ and $\tau_2(a)$ if $a$
is in $\alpha_2$.
\par
The product of homotopy data triples induces a product on the
isomorphism classes of homotopy data triples.
This product is well-defined because, if $(\alpha_1,\tau_1,S_1)$
is isomorphic to $(\alpha^\prime_1,\tau^\prime_1,S^\prime_1)$ and
$(\alpha_2,\tau_2,S_2)$ is isomorphic to
$(\alpha^\prime_2,\tau^\prime_2,S^\prime_2)$, then the product of
$(\alpha_1,\tau_1,S_1)$ and $(\alpha_2,\tau_2,S_2)$ is isomorphic to the
product of $(\alpha^\prime_1,\tau^\prime_1,S^\prime_1)$ and
$(\alpha^\prime_2,\tau^\prime_2,S^\prime_2)$.
\par
Suppose $(\beta,\tau_\beta,S_\beta)$ is a factor of
$(\alpha,\tau,S)$.
Let $\gamma$ be the set $\alpha - \beta$,
let $\tau_\gamma$ be the restriction of $\tau$ to $\gamma$ and let
$S_\gamma$ be the intersection of $S$ and
$\gamma \times \gamma \times \gamma$.
Then the product of $(\beta,\tau_\beta,S_\beta)$ and
$(\gamma,\tau_\gamma,S_\gamma)$ is isomorphic to $(\alpha,\tau,S)$.
\par
It is easy to check that up to isomorphism, the product on homotopy data
triples is commutative and associative.
The unit of the product is the triple
$(\trivial,\tau_\trivial,\trivial)$.
\par
We now show that, up to isomorphism and reordering, any homotopy data
triple can be uniquely written as a product of prime homotopy data
triples.
We first state some lemmas.
\begin{lem}\label{lem:hdt-primefactor}
Suppose $h$ is a homotopy data triple that can be represented as the
 product of $f$ and $g$.
Suppose $k$ is a prime factor of $h$.
Then $k$ is a prime factor of $f$ or $g$.
\end{lem}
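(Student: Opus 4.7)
The plan is to exploit the disjoint-union structure of the product and use the primality of $k$ to force its support to lie entirely inside one of the two factors.

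Up to isomorphism, I may assume $f = (\alpha_1,\tau_1,S_1)$ and $g = (\alpha_2,\tau_2,S_2)$ with $\alpha_1$ and $\alpha_2$ disjoint, so that $h = (\alpha,\tau,S)$ where $\alpha = \alpha_1 \cup \alpha_2$, $\tau$ restricts to $\tau_i$ on $\alpha_i$, and $S = S_1 \cup S_2$ with $S_i \subseteq \alpha_i \times \alpha_i \times \alpha_i$. Write the prime factor $k = (\beta,\tau_\beta,S_\beta)$ of $h$, and set $\gamma = \alpha - \beta$. The key observation is that $S_i \cap \alpha_j^3 = \emptyset$ for $i \neq j$, so any triple of $S$ is supported entirely in $\alpha_1$ or entirely in $\alpha_2$.

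Next I would put $\beta_i = \beta \cap \alpha_i$, which is $\tau$-invariant since $\alpha_i$ is. I claim $(\beta_1,\tau_{\beta_1},S_{\beta_1})$ is a factor of $k$. Indeed $\beta_1$ is a $\tau_\beta$-invariant subset of $\beta$ with complement $\beta_2$, and $S_{\beta_i} = S_\beta \cap \beta_i^3$. Since every element of $S_\beta \subseteq S$ has support in $\alpha_1$ or in $\alpha_2$, every element of $S_\beta$ lies in $\beta_1^3$ or in $\beta_2^3$; hence $S_\beta = S_{\beta_1} \cup S_{\beta_2}$. Because $k$ is prime, this factor must be improper, so either $\beta_1 = \trivial$ or $\beta_1 = \beta$; equivalently $\beta \subseteq \alpha_2$ or $\beta \subseteq \alpha_1$.

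Suppose $\beta \subseteq \alpha_1$ (the other case is symmetric). It remains to verify that $k$ is then a factor of $f$. Clearly $\beta$ is a $\tau_1$-invariant subset of $\alpha_1$, and $S_\beta = S \cap \beta^3 = S_1 \cap \beta^3$ since $S_2 \cap \beta^3 = \emptyset$. Let $\delta_1 = \alpha_1 - \beta$; I must check $S_1 = S_\beta \cup (S_1 \cap \delta_1^3)$. For any $s \in S_1$, we have $s \in \alpha_1^3$ and, because $k$ is a factor of $h$, $s$ lies in $\beta^3$ or in $\gamma^3$; in the latter case $s$ must lie in $(\gamma \cap \alpha_1)^3 = \delta_1^3$. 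This gives the required decomposition, so $k$ is a factor of $f$, and since $k$ is prime it is a prime factor of $f$.

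The only mildly subtle step is the second paragraph, where one must notice that even though $\beta$ can in principle straddle $\alpha_1$ and $\alpha_2$, the factorization hypothesis together with the fact that $S$ has no "mixed" triples forces $\beta_1$ and $\beta_2$ to yield a factorization of $k$ itself; once this is set up, primality of $k$ does all the work.
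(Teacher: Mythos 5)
Your proof is correct and follows essentially the same two-step strategy as the paper's: first use primality of $k$ to show $\beta = \alpha_k$ cannot meet both $\alpha_1$ and $\alpha_2$ (by observing that $\beta_1, \beta_2$ would give a proper factorization of $k$), then verify directly that $k$ is a factor of the one containing it. The only cosmetic differences are that you make explicit the observation that no element of $S$ ``mixes'' $\alpha_1$ and $\alpha_2$, and that you argue the second step directly rather than by the paper's contradiction ($S_k \cup S_p \neq S_f$ would force $S_k \cup S_q \neq S_h$).
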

\begin{proof}
In this proof, the data for a homotopy data triple $x$ is written
 $\alpha_x$, $\tau_x$ and $S_x$.
As $h$ can be represented as the product $fg$, $\alpha_f$ and $\alpha_g$
 are $\tau_h$-invariant sets which partition $\alpha_h$.
\par
Now suppose that $\alpha_k \cap \alpha_f$ and $\alpha_k \cap \alpha_g$
 are both non-empty.
Write $\alpha_l$ for $\alpha_k \cap \alpha_f$ and $\alpha_m$ for
 $\alpha_k \cap \alpha_g$.
Then both $\alpha_l$ and $\alpha_m$ are $\tau_h$-invariant sets.
We define $S_l$ as the intersection of $S_h$ and
$\alpha_l \times \alpha_l \times \alpha_l$ and define $S_m$ as the
 intersection of $S_h$ and
$\alpha_m \times \alpha_m \times \alpha_m$.
Since $S_h$ is equal to $S_f \cup S_g$, $S_k$ must equal 
$S_l \cup S_m$.
This implies that $(\alpha_l,\tau_l,S_l)$ is a factor of $k$ where
 $\tau_l$ is the restriction of $\tau_h$ to $\alpha_l$.
However, this contradicts the primality of $k$.
Thus $\alpha_k$ is wholly contained in either $\alpha_f$ or $\alpha_g$.
\par
Without loss of generality, we assume that $\alpha_k$ is a subset of
 $\alpha_f$.
Let $\alpha_p$ be the set $\alpha_f - \alpha_k$ and let $S_p$ be 
the intersection of $S_f$ and
$\alpha_p \times \alpha_p \times \alpha_p$.
Let $\alpha_q$ be the set $\alpha_h - \alpha_k$ and let $S_q$ be 
the intersection of $S_h$ and
$\alpha_q \times \alpha_q \times \alpha_q$.
Now suppose that $S_k \cup S_p$ is not equal to $S_f$.
Then this would imply that $S_k \cup S_q$ is not equal to $S_h$.
However, this contradicts the fact that $k$ is a factor of $h$.
Thus $S_k \cup S_p$ is equal to $S_f$ and $k$ is a factor of $f$.
\end{proof}
\begin{lem}\label{lem:hdt-isomorphicfactors}
Let $g$, $h$ be homotopy data triples and $k$ be a prime homotopy data
 triple.
If the product of $g$ and $k$ is isomorphic to the product of $h$ and
 $k$, then $g$ and $h$ are isomorphic.
\end{lem}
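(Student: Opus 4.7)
The strategy is to exploit Lemma~\ref{lem:hdt-primefactor} by tracking the image of the $k$-factor under a fixed isomorphism $\varphi$ from $g \times k$ to $h \times k$. Write $\alpha_k^{(1)}$ and $\alpha_k^{(2)}$ for the copies of $\alpha_k$ sitting inside $\alpha_{g \times k} = \alpha_g \cup \alpha_k^{(1)}$ and $\alpha_{h \times k} = \alpha_h \cup \alpha_k^{(2)}$ respectively. Then $\varphi(\alpha_k^{(1)})$ is a $\tau$-invariant subset of $\alpha_{h \times k}$ whose associated sub-triple is isomorphic to $k$ and hence is a prime factor of $h \times k$. By Lemma~\ref{lem:hdt-primefactor} this subset must lie wholly in $\alpha_h$ or wholly in $\alpha_k^{(2)}$, which gives a clean case split.

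In the case $\varphi(\alpha_k^{(1)}) \subseteq \alpha_k^{(2)}$, a cardinality comparison forces the equality $\varphi(\alpha_k^{(1)}) = \alpha_k^{(2)}$, so $\varphi$ restricts to a bijection $\alpha_g \to \alpha_h$. Since the product structure forbids elements of $S_{g \times k}$ or $S_{h \times k}$ from straddling the two factors, this restricted bijection is an isomorphism from $g$ to $h$. In the case $\varphi(\alpha_k^{(1)}) \subseteq \alpha_h$, the image $\varphi(\alpha_k^{(1)})$ carves off a factor of $h$ isomorphic to $k$; let $h'$ denote the complementary factor, so that $h$ is isomorphic to $h' \times k$. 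The same no-straddling argument shows that $\varphi$ restricted to $\alpha_g$ is an isomorphism from $g$ onto the sub-triple of $h \times k$ supported on $(\alpha_h \setminus \varphi(\alpha_k^{(1)})) \cup \alpha_k^{(2)}$, which is precisely $h' \times k$. Hence $g$ and $h$ are both isomorphic to $h' \times k$, and thus to each other.

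The main care will be in justifying that restrictions of the global isomorphism $\varphi$ to complementary $\tau$-invariant subsets really do yield isomorphisms of the induced sub-triples. This reduces to the observation that whenever a triple is written as a product $y \times z$, its $S$-set is the disjoint union of $S_y$ (supported on $\alpha_y \times \alpha_y \times \alpha_y$) and $S_z$ (supported on $\alpha_z \times \alpha_z \times \alpha_z$), with no triples having entries in both factors; combined with the fact that $\varphi$ preserves $\tau$ and $S$ globally, this yields the desired compatibility on each restriction. Once this bookkeeping is in place, both cases of the case split become routine verifications.
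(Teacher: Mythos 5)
Your argument follows the same line as the paper's proof: fix the isomorphism $\varphi$ from $g \times k$ to $h \times k$, observe that the image of the $k$-factor is a prime factor of $h \times k$, and invoke Lemma~\ref{lem:hdt-primefactor} to split into the two cases (image inside the $k$-factor, forcing $\varphi$ to restrict to an isomorphism $g \to h$, versus image inside $h$, so that $h \cong h' \times k$ and $g$ is carried onto $h' \times k$, whence $g \cong h$ by commutativity of the product). You supply a bit more detail than the paper on why the restriction of $\varphi$ to a $\tau$-invariant complementary block is itself an isomorphism of the induced sub-triples — the no-straddling property of $S$ in a product — but the decomposition, the key lemma, and the case analysis are identical.
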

\begin{proof}
We write $gk$ for the product of $g$ and $k$ and $hk$ for the product of
 $h$ and $k$.
Let $f$ be the isomorphism from $gk$ to $hk$.
As $k$ is a prime factor of $gk$, the image of $k$, $f(k)$, is a prime
 factor of $hk$.
Thus by Lemma~\ref{lem:hdt-primefactor}, $f(k)$ is either a factor of
 $h$ or of $k$.
\par
If $f(k)$ is a factor of $k$, then $f(k)$ is $k$ as they have the same
 size.
Thus the image of $g$ must be $h$ and $f$ restricted to $g$ gives an
 isomorphism from $g$ to $h$.
\par
If $f(k)$ is a factor of $h$, then $h$ can be written $kh^\prime$ for
 some $h^\prime$.
Then $f$ maps $g$ onto $h^\prime k$ and this gives an isomorphism from
 $g$ to $h^\prime k$.
Since the product of homotopy data triples is commutative up to
 isomorphism, $h^\prime k$ is isomorphic to $kh^\prime$.
Thus $g$ is isomorphic to $h$.
\end{proof}
\begin{lem}\label{lem:hdt-findfactor}
Let $h$ be a prime homotopy data triple and $h_i$ be a set of prime
 homotopy data triples, for $i$ running from $1$ to $n$.
Let $k$ be the product of the $h_i$.
If $h$ is a factor of $k$, then $h$ is isomorphic to one of the $h_i$.
\end{lem}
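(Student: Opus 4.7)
The plan is to prove the lemma by induction on $n$, using Lemma~\ref{lem:hdt-primefactor} as the main engine. The idea is that $h$ being a factor of the product $h_1 h_2 \cdots h_n$ forces $h$, by its primality, to be "absorbed" entirely into one of the factors $h_i$, and once it is trapped inside a prime $h_i$ it must actually coincide with $h_i$ up to isomorphism.

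For the base case $n = 1$, we have $k = h_1$, which is prime. Suppose $h$ is a factor of $k$. Since $k$ is prime, its only factors are itself and the empty triple $(\trivial,\tau_\trivial,\trivial)$. Because $h$ is prime, $\alpha_h$ is non-empty, so $h$ cannot be the empty triple. Hence $h$ is (isomorphic to) $h_1$.

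For the inductive step, group the factors and write $k$ as the product of $f := h_1$ and $g := h_2 \cdots h_n$. Since $h$ is a prime factor of $k = fg$, Lemma~\ref{lem:hdt-primefactor} tells us that $h$ is a prime factor of $f$ or of $g$. In the first case, $f = h_1$ is prime, so the base case applies and gives $h \cong h_1$. In the second case, we apply the inductive hypothesis to the product $g = h_2 \cdots h_n$ of $n-1$ prime homotopy data triples and conclude that $h$ is isomorphic to one of $h_2, \dotsc, h_n$. Either way, $h$ is isomorphic to some $h_i$, which is what we wanted.

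There is no serious obstacle here; the content is already packaged in Lemma~\ref{lem:hdt-primefactor}, and all that remains is a clean induction. The only subtle point to state carefully is that, in the base case, a prime triple has non-empty underlying set, so the empty factor is ruled out and we genuinely get $h \cong h_1$ rather than the trivial possibility. Once this observation is in place, the rest of the argument is immediate.
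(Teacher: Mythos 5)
Your proof is correct and follows essentially the same strategy as the paper: induction on $n$, with Lemma~\ref{lem:hdt-primefactor} splitting the prime factor $h$ into either $h_1$ or the remaining product, and the primality of $h$ ruling out the empty factor in the base case. The paper compresses the inductive step slightly (phrasing it as ``if $h\not\cong h_1$ then $h$ is a factor of $h_2\cdots h_n$''), but that is the same argument you spelled out.
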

\begin{proof}
We prove by induction on $n$.
If $n$ is $1$ then $k$ is isomorphic to $h_1$ which is prime.
Therefore $h$ must be isomorphic to $k$.
\par
We now assume the statement is true for $n-1$ and prove it for $n$.
If $h$ is isomorphic to $h_1$ then the statement is true.
If not, Lemma~\ref{lem:hdt-primefactor} implies that $h$ must be a
 factor of the product of $h_i$ for $i$ running from $2$ to $n$.
Then this product has $n-1$ triples and so by assumption, $h$ is
 isomorphic to one of the $h_i$ for $i$ not equal to $1$.
\end{proof}
We then have the following proposition.
\begin{prop}\label{prop:uft}
Any homotopy data triple can be represented as a product of prime
 homotopy data triples.
This representation is unique up to isomorphism and the order of the
 triples in the product. 
\end{prop}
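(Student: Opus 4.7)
The plan is to mirror the classical proof of the fundamental theorem of arithmetic, with the three preceding lemmas playing the roles of Euclid's lemma and cancellation. Existence will be established by induction on $|\alpha|$, and uniqueness by induction on the number of prime factors.

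For existence, let $(\alpha,\tau,S)$ be given and induct on $|\alpha|$. When $\alpha$ is empty, the triple is the unit $(\trivial,\tau_\trivial,\trivial)$, which is the empty product of primes. If $(\alpha,\tau,S)$ is itself prime, it is its own factorization. Otherwise it is composite, so it has a proper factor $(\beta,\tau_\beta,S_\beta)$, and with $\gamma = \alpha - \beta$ the complementary triple $(\gamma,\tau_\gamma,S_\gamma)$ is also a factor. Both have underlying sets strictly smaller than $\alpha$, so by the inductive hypothesis each admits a prime factorization. Since the author has already observed that the product of $(\beta,\tau_\beta,S_\beta)$ and $(\gamma,\tau_\gamma,S_\gamma)$ is isomorphic to $(\alpha,\tau,S)$, concatenating the two prime factorizations furnishes one for $(\alpha,\tau,S)$.

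For uniqueness, suppose $h_1 \cdots h_m$ is isomorphic to $k_1 \cdots k_n$ with every $h_i$ and $k_j$ prime, and induct on $m$. When $m = 0$ the left-hand product is the unit, so its underlying set is empty, forcing $n = 0$ as well. For $m \geq 1$, the triple $h_1$ is a prime factor of $h_1 \cdots h_m$, and transporting along the isomorphism shows it is a prime factor of $k_1 \cdots k_n$. Lemma~\ref{lem:hdt-findfactor} then gives $h_1 \cong k_j$ for some $j$, and using the commutativity of the product up to isomorphism we may relabel so that $j = 1$. Applying Lemma~\ref{lem:hdt-isomorphicfactors} to cancel $h_1 \cong k_1$ yields $h_2 \cdots h_m \cong k_2 \cdots k_n$, and the inductive hypothesis gives $m - 1 = n - 1$ together with a matching of the remaining factors up to isomorphism and reordering.

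The three preceding lemmas do essentially all of the real work, so the main proposition is more assembly than obstacle. The only point requiring care is that the product on homotopy data triples is commutative, associative, and unital only up to isomorphism; the rearrangements needed to put $h_1 \cong k_j$ into the first slot and to invoke Lemma~\ref{lem:hdt-isomorphicfactors} must therefore be carried out at the level of isomorphism classes, which is legitimate by the well-definedness of the induced product on isomorphism classes that the author established just before the lemmas.
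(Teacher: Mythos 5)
Your proof follows essentially the same route as the paper: existence by repeated splitting of composite triples, and uniqueness by induction on the number of prime factors using Lemma~\ref{lem:hdt-findfactor} to match a factor and Lemma~\ref{lem:hdt-isomorphicfactors} to cancel it. Your version is slightly more carefully stated in two small ways---you make the termination of the existence argument explicit by inducting on $\abs{\alpha}$, and your base case $m=0$ handles the unit triple $(\trivial,\tau_\trivial,\trivial)$ (which is neither prime nor composite) via the empty product, a case the paper's phrasing glosses over---but these are polish, not a different approach.
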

\begin{proof}
Existence of such a representation is easy to show.
If the homotopy data triple is prime we already have our representation.
If not, it is composite and can be written as the product of two
 homotopy data triples.
We consider each of these in turn and factor any that are composite.
We continue this process until only prime factors remain.
\par
Let $h$ be a homotopy data triple and suppose it can be factored
 into triples $h_i$ for $i$ running from $1$ to $m$ and also into
 triples $h^\prime_j$ for $j$ running from $1$ to $n$.
Without loss of generality we may assume that $n$ is greater than or
 equal to $m$.
\par
We prove uniqueness by induction on $m$.
For if $m$ is $1$ then $h_1$ is prime and is isomorphic to $h$.
Thus $n$ must also be $1$ and $h^\prime_1$ is isomorphic to $h_1$.
\par
We now assume the statement is true for $m-1$ and prove it for $m$.
By Lemma~\ref{lem:hdt-findfactor}, $h_1$ must be isomorphic to one of
 the $h^\prime_j$.
By reordering the $h^\prime_j$ if necessary, we may assume that it is
 $h^\prime_1$.
Then by Lemma~\ref{lem:hdt-isomorphicfactors} the product of $h_i$ with
 $i$ running from $2$ to $m$ and the product of $h^\prime_j$ for $j$
 running from $2$ to $n$ are isomorphic.
As the product of $h_i$ with $i$ running from $2$ to $m$ has $m-1$
 factors, we use the induction assumption to show that $n$ is equal to
 $m$ and, after appropriate reordering of the $h^\prime_j$, $h_j$ is
 isomorphic to $h^\prime_j$ for each $j$.
\end{proof}
\begin{ex}\label{ex:diagonalfactorization}
Let $(\alpha,\tau,S)$ be a homotopy data triple with diagonal
 $S$.
It is easy to check that $(\alpha,\tau,S)$ is isomorphic to the product
\begin{equation*}
(\alpha_G,\tau_G,S_G)^k (\alpha_F,\tau_F,S_F)^l
\end{equation*}
where $k$ and $l$ are non-negative integers such that $k+l$ is greater
 than $0$ and $(\alpha_G,\tau_G,S_G)$ and $(\alpha_F,\tau_F,S_F)$ are
 the prime homotopy data triples defined in
 Example~\ref{ex:diagonalshdt}.
\end{ex}
The following lemma extends Turaev's Lemma~3.3.1 in
\cite{Turaev:Words} to the case of general $S$.
\begin{lem}\label{lem:subhomotopyinvariance}
Let $(\alpha,\tau_\alpha,S_\alpha)$ be a homotopy data triple and let 
$(\beta,\tau_\beta,S_\beta)$ be a factor of
 $(\alpha,\tau_\alpha,S_\alpha)$.
We denote the equivalence relation given by
 $(\alpha,\tau_\alpha,S_\alpha)$ as $\sim_\alpha$ and the equivalence
 relation given by $(\beta,\tau_\beta,S_\beta)$ as $\sim_\beta$.
Now let $p$ and $p^\prime$ be two nanophrases in $\mathcal{P}(\beta)$.
If $p \sim_\alpha p^\prime$, then $p \sim_\beta p^\prime$.
\end{lem}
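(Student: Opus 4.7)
The plan is to mimic the argument of Lemma~\ref{lem:fpreserveshomotopy}, but with the filtering done according to the projection to $\alpha$ rather than according to component index. Let $\gamma = \alpha - \beta$, and define a map $g\colon \mathcal{P}(\alpha) \to \mathcal{P}(\beta)$ which, given a nanophrase $(\mathcal{A},q)$ over $\alpha$, deletes from $q$ (and from $\mathcal{A}$) every letter whose projection lies in $\gamma$. Because each letter appears exactly twice, the result is a Gauss phrase, and its associated alphabet, with the restricted projection, is a genuine $\beta$-alphabet. Notice that if $q$ is already a nanophrase over $\beta$ (viewed as a nanophrase over $\alpha$), then $g(q) = q$; in particular $g(p) = p$ and $g(p') = p'$.

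Next I would check that $g$ sends a single step in the $\alpha$-homotopy to either an isomorphism or a single step in the $\beta$-homotopy. Isomorphisms of $\alpha$-alphabets preserve projections and thus restrict to isomorphisms of the $\beta$-sub-alphabets, so they are sent to isomorphisms. For an H1 move removing a letter $A$: if $\xType{A} \in \beta$ the move survives (with the same letter) as an H1 move in $\sim_\beta$; if $\xType{A} \in \gamma$ then $g$ of the two sides agree. For an H2 move removing $A, B$ with $\tau(\xType{A}) = \xType{B}$, the $\tau$-invariance of $\beta$ forces $\xType{A}$ and $\xType{B}$ to lie in the same one of $\beta$ or $\gamma$; in the first case $(\xType{A},\xType{B})$ is a valid pair for an H2 move in $\sim_\beta$ (since $\tau_\beta$ is the restriction of $\tau$), and in the second case $g$ of the two sides agree.

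The key case is H3. Here we have three letters $A,B,C$ with $(\xType{A},\xType{B},\xType{C}) \in S$. Since $(\beta,\tau_\beta,S_\beta)$ is a factor of $(\alpha,\tau_\alpha,S_\alpha)$, we have $S = S_\beta \cup S_\gamma$, so any triple of $S$ is either contained in $\beta \times \beta \times \beta$ or in $\gamma \times \gamma \times \gamma$; in particular a mixed triple cannot occur. If the triple lies in $S_\beta$, then $g$ does not delete $A$, $B$ or $C$, and the two sides of the move become an H3 move allowed in $\sim_\beta$. If the triple lies in $S_\gamma$, then all three letters are deleted by $g$ and the two sides become equal $\beta$-nanophrases. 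The mild technical point I expect to handle carefully is this trichotomy-to-dichotomy step in H3, as it is the single place where the factor hypothesis $S=S_\beta\cup S_\gamma$ is genuinely used.

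Finally, given a sequence of $\alpha$-homotopy moves and isomorphisms realizing $p \sim_\alpha p'$, I apply $g$ term by term. By the above, the resulting sequence is a sequence of $\beta$-homotopy moves and isomorphisms between $g(p) = p$ and $g(p') = p'$, proving $p \sim_\beta p'$.
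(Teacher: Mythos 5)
Your proof is correct and supplies the details that the paper only alludes to by citing Turaev's Lemma~3.3.1 of \cite{Turaev:Words}; the filtering map $g$ you define, the case analysis of the three moves, and the identification of H3 as the one place where $S = S_\beta \cup S_\gamma$ is used are exactly the argument that reference carries out. No gaps.
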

\begin{proof}
This is proved in the same way as Lemma~3.3.1 of
\cite{Turaev:Words}. 
\end{proof}
\begin{rem}
In Lemma~3.3.1 of \cite{Turaev:Words}, Turaev proved the case
 where $S_\alpha$ is diagonal and $p$ and $p^\prime$ are \'etale
 words.
An \'etale word is a word on an $\alpha$-alphabet, so all nanowords are
 \'etale words (see \cite{Turaev:Words} for more details).
\end{rem}
%%%%%%%%%%%%%%%%%%%%%%%%%%%%%%%%%%%%%%%%%%%%%%%%
\section{An invariant for nanowords}\label{sec:nanoword-inv}
In this section we fix a composite homotopy data triple
$(\alpha,\tau,S)$ and let its prime factors be denoted by
$(\alpha_i,\tau_i,S_i)$ for $i$ 
running from $1$ to $k$ for some $k$ greater than $1$.
Let $\sim$ be the homotopy given by $(\alpha,\tau,S)$ and, for each $i$,
let $\sim_i$ be the homotopy given by $(\alpha_i,\tau_i,S_i)$.
\par
For any positive integer $n$, we say that a map from $\hat{n}$ to
$\hat{k}$ is \emph{locally variable} if there does not exist an $i$ for which 
$\theta(i)$ equals $\theta(i+1)$.
Let $\mathcal{P}_A(\alpha)$ be the set of pairs $(p,\theta)$,
where $p$ is a nanophrase over $\alpha$
and $\theta$ is a locally variable map from $\widehat{\nc(p)}$ to
$\hat{k}$.
We extend the definition of isomorphism and homotopy moves of
nanophrases to $\mathcal{P}_A(\alpha)$ by saying that 
$(p,\theta) \sim (p^\prime,\theta)$ if $p \sim p^\prime$.
\par
Let $\mathcal{P}_R(\alpha)$ be the subset of $\mathcal{P}_A(\alpha)$
consisting of pairs 
$(p,\theta)$ for which every letter $X$ appearing in the $i$th component
of $p$, $\xType{X}$ is in $\alpha_{\theta(i)}$, for all $i$.
Note that if $p$ is a nanophrase over $\alpha$ with no empty components then
if $(p,\theta)$ and $(p,\theta^\prime)$ are both in
$\mathcal{P}_R(\alpha)$ we can 
conclude that $\theta$ and $\theta^\prime$ are equal.
\par
Note that we can define a map $\Gamma$ from $\mathcal{P}_A(\alpha)$ to
$\mathcal{P}_R(\alpha)$ 
as follows.
Given $(p,\theta)$ in $\mathcal{P}_A(\alpha)$ we derive $p^\prime$ from $p$ by
deleting all letters $X$ which have projections not matching $\theta$.
In other words, for each $i$, we delete every letter $X$ in the $i$th
component of $p$ for which $\xType{X}$ is not in $\alpha_{\theta(i)}$.
\par
We define an equivalence relation on $\mathcal{P}_R(\alpha)$, written
$\sim_K$, as follows.
Let $(p,\theta)$ and $(p^\prime,\theta)$ be elements of 
$\mathcal{P}_R(\alpha)$.
Then $(p,\theta) \sim_K (p^\prime,\theta)$
if there exists a sequence of elements $(p_i,\theta)$ of 
$\mathcal{P}_R(\alpha)$, for $i$ running from $0$ to 
$r$ for some $r$, such that $p_0$ is $p$, $p_r$ is $p^\prime$ and $p_i$
is related to $p_{i+1}$ by a single homotopy move or an isomorphism.
\par
Note that $(p,\theta) \sim_K (p^\prime,\theta)$ implies
$(p,\theta) \sim (p^\prime,\theta)$. 
The following lemmas show that $\sim_K$ can be viewed as a restriction
of $\sim$ to $\mathcal{P}_R(\alpha)$.
\begin{lem}
Let $(p,\theta)$ and $(p^\prime,\theta)$ be two elements in
 $\mathcal{P}_A(\alpha)$. 
If $(p,\theta) \sim (p^\prime,\theta)$ then 
$\Gamma((p,\theta)) \sim_K \Gamma((p^\prime,\theta))$. 
\end{lem}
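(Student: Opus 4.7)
The plan is to reduce to the single-move case and then mimic the proof of Lemma~\ref{lem:fpreserveshomotopy}. Since $\sim_K$ is transitive and any homotopy $p \sim p'$ is a finite chain of isomorphisms and H1, H2, H3 moves, it suffices to verify $\Gamma((p,\theta)) \sim_K \Gamma((p',\theta))$ when $p$ and $p'$ are related by a single such step. Observe first that $\Gamma$ does not alter the component count, so the same locally variable $\theta$ serves both sides, and both $\Gamma((p,\theta))$ and $\Gamma((p',\theta))$ lie in $\mathcal{P}_R(\alpha)$ by construction.

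The isomorphism and H1 cases are routine: an isomorphism $p \to p'$ descends to an isomorphism $\Gamma((p,\theta)) \to \Gamma((p',\theta))$, and for an H1 move removing a letter $A$, either $\Gamma$ keeps $A$ (and the same H1 move relates the two images in $\mathcal{P}_R(\alpha)$) or it deletes $A$ (in which case the two images are already equal). For H2, the two letters $A$ and $B$ in the pattern $xAByBAz$ occupy the same two components, and since $\tau(\xType{A}) = \xType{B}$ together with $\tau$-invariance of each $\alpha_i$ forces $\xType{A}$ and $\xType{B}$ into a common $\alpha_j$, the letters $A$ and $B$ are kept or killed by $\Gamma$ simultaneously; we thus obtain either an H2 move in $\mathcal{P}_R(\alpha)$ or an equality.

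The main case, and the only one requiring real care, is H3. Here $(\xType{A},\xType{B},\xType{C}) \in S$, and since $S$ is the disjoint union of the $S_i$, all three types lie in a single $\alpha_j$. Reading the pattern $xAByACzBCt$, the adjacencies $AB$, $AC$, $BC$ force the three letters to share components in pairs: if we let $u$, $v$, $w$ be the truth values of ``the component containing $AB$'' (resp.\ $AC$, $BC$) having $\theta$-value $j$, then $\Gamma$ keeps $A$ iff $u \wedge v$, keeps $B$ iff $u \wedge w$, and keeps $C$ iff $v \wedge w$. A direct check shows that the number of kept letters among $A, B, C$ is either three (exactly when $u = v = w$ are all true) or at most one; two-out-of-three is impossible. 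In the all-kept case the H3 move carries over verbatim to $\mathcal{P}_R(\alpha)$ since $(\xType{A},\xType{B},\xType{C}) \in S$ still holds; in the remaining cases one inspects the two patterns $xAByACzBCt$ and $xBAyCAzCBt$ directly and sees that deleting the same subset of $\{A,B,C\}$ from both yields identical strings, so $\Gamma((p,\theta)) = \Gamma((p',\theta))$. This Boolean bookkeeping for H3 is the only mildly subtle step; everything else is an adaptation of Lemma~\ref{lem:fpreserveshomotopy}.
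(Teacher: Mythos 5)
Your proof is correct and follows essentially the same route the paper indicates: the paper dispenses with this lemma by saying it is proved "in a similar way to Lemma~\ref{lem:fpreserveshomotopy}," and your argument is precisely that adaptation — reduce to a single isomorphism or move, then do a case analysis on H1, H2, H3, observing that $\Gamma$ either preserves the move or collapses both sides to the same nanophrase. Your Boolean bookkeeping for H3 (kept count is $0$, $1$, or $3$, never $2$) is the conjunctive dual of the disjunctive count in the proof of Lemma~\ref{lem:fpreserveshomotopy} (where the deleted count $k$ is $0$, $2$, or $3$), and the adjacency argument for H2 via $\tau$-invariance of the $\alpha_i$ is exactly what is needed; no gaps.
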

\begin{proof}
This can be proved in a similar way to
 Lemma~\ref{lem:fpreserveshomotopy}.
\end{proof}  
\begin{lem}\label{lem:restrictionembedding}
Let $(p,\theta)$ and $(p^\prime,\theta)$ be two elements in
 $\mathcal{P}_R(\alpha)$. 
If $(p,\theta) \sim (p^\prime,\theta)$ then 
$(p,\theta) \sim_K (p^\prime,\theta)$. 
\end{lem}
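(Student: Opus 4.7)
The plan is to deduce this lemma essentially as an immediate corollary of the preceding (unnumbered) lemma that $(p,\theta) \sim (p^\prime,\theta)$ in $\mathcal{P}_A(\alpha)$ implies $\Gamma((p,\theta)) \sim_K \Gamma((p^\prime,\theta))$. Only two small observations are needed to make this work.

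First, I would note that $\mathcal{P}_R(\alpha) \subseteq \mathcal{P}_A(\alpha)$, so the hypothesis $(p,\theta) \sim (p^\prime,\theta)$ is meaningful when viewed in $\mathcal{P}_A(\alpha)$, and the previous lemma applies.

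Second, I would verify that $\Gamma$ restricts to the identity on $\mathcal{P}_R(\alpha)$. By definition, a pair $(p,\theta)$ lies in $\mathcal{P}_R(\alpha)$ exactly when every letter $X$ appearing in the $i$th component of $p$ satisfies $\xType{X} \in \alpha_{\theta(i)}$, for every $i$. Since $\Gamma$ is defined to delete precisely those letters whose projections fail to match $\theta$, nothing is deleted when $\Gamma$ is applied to an element of $\mathcal{P}_R(\alpha)$. Hence $\Gamma((p,\theta)) = (p,\theta)$ and $\Gamma((p^\prime,\theta)) = (p^\prime,\theta)$.

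Combining these two observations with the preceding lemma gives $(p,\theta) \sim_K (p^\prime,\theta)$ directly. There is no real obstacle in the present lemma itself; the substantive work is carried out in the previous lemma (whose proof is the analogue of Lemma~\ref{lem:fpreserveshomotopy}, tracking that each homotopy move and each isomorphism descends under $\Gamma$ to a valid sequence of moves and isomorphisms inside $\mathcal{P}_R(\alpha)$). The role of the current statement is simply to rephrase that conclusion when the input already belongs to $\mathcal{P}_R(\alpha)$, thereby exhibiting $\sim_K$ as the restriction of $\sim$ to $\mathcal{P}_R(\alpha)$.
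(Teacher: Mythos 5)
Your proof is correct and takes essentially the same route as the paper, which simply says the lemma "can be proved in a similar way to Lemma~\ref{lem:embedding}" (apply the projection map to a sequence of nanophrases exhibiting the homotopy). Your presentation is a touch more modular — invoking the preceding unnumbered lemma as a black box and then noting that $\Gamma$ fixes $\mathcal{P}_R(\alpha)$ pointwise — but the underlying idea (push a homotopy sequence through $\Gamma$, observe the endpoints are unchanged) is identical.
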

\begin{proof}
This can be proved in a similar way to
 Lemma~\ref{lem:embedding}.
\end{proof}
Let $(p,\theta)$ be an element of $\mathcal{P}_R(\alpha)$.
We fix an integer $i$ and let $O_i$ be the subset of $\widehat{\nc(p)}$
consisting of elements $j$ where $\theta(j)$ is equal to $i$.
Then we define $s_i((p,\theta))$ to be $x(p,O_i)$ (which was defined in
Section~\ref{sec:nanophrases}). 
Note that $s_i((p,\theta))$ is the subnanophrase of $p$
consisting of the components containing letters which map to
$\alpha_i$ and so $s_i((p,\theta))$ is a nanophrase over $\alpha_i$.
\begin{prop}\label{prop:factorequivalence}
Let $(p,\theta)$ and $(p^\prime,\theta)$ be two elements of
 $\mathcal{P}_R(\alpha)$.
Then $(p,\theta) \sim_K (p^\prime,\theta)$ if and only if
$s_i((p,\theta)) \sim_i s_i((p^\prime,\theta))$ for all $i$. 
\end{prop}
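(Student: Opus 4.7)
The plan is to prove each direction separately, in both cases exploiting that under the product decomposition $\tau$ preserves each $\alpha_i$ and $S$ is the disjoint union of the subsets $S_i\subseteq\alpha_i\times\alpha_i\times\alpha_i$. Combined with the defining condition of $\mathcal{P}_R(\alpha)$---that every letter $X$ appearing in the $j$th component of $p$ has $\xType{X}\in\alpha_{\theta(j)}$---this segregation forces every homotopy move used in an $\sim_K$-chain to interact with only one prime factor at a time.

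For the forward direction, I would take a witnessing chain of isomorphisms and moves in $\mathcal{P}_R(\alpha)$ and analyse it step by step. An H1 move deletes a letter of some type $i$, inducing an H1 move on $s_i$ and the identity on each $s_j$ with $j\neq i$. For H2, $\tau$-invariance of each $\alpha_i$ forces the two letters $A,B$ to lie in the same $\alpha_i$, so the move descends to an H2 move on the corresponding $s_i$. For H3 the triple $(\xType{A},\xType{B},\xType{C})\in S$ lies entirely in a single $S_i$, so all three letters sit in components of type $i$ and the move descends to an H3 move on $s_i$. Isomorphisms in $\mathcal{P}_R(\alpha)$ restrict to isomorphisms of each $s_i$. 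Composing these observations along the chain yields $s_i((p,\theta))\sim_i s_i((p^\prime,\theta))$ for every $i$.

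For the backward direction, I would process the prime factors one at a time. For $i=1$, take a witnessing chain realising $s_1((p,\theta))\sim_1 s_1((p^\prime,\theta))$ and lift it step by step to a chain in $\mathcal{P}_R(\alpha)$ starting at $(p,\theta)$. A single H1, H2 or H3 move on $s_1$ applies the same letter manipulation inside the full nanophrase; because every letter involved lies in $\alpha_1$, it appears only in components of type $1$, and the intervening components of other types can be absorbed into the sequences $x,y,z,t$ of the move without affecting the letters being rearranged. The lifted nanophrase still satisfies the $\mathcal{P}_R(\alpha)$-condition because no new letter is introduced and no component is altered outside type $1$. Iterating for $i=2,\dots,k$ yields some $(p_k,\theta)$ with $s_i((p_k,\theta))$ isomorphic to $s_i((p^\prime,\theta))$ for every $i$, and since an element of $\mathcal{P}_R(\alpha)$ is determined up to isomorphism by $\theta$ together with the tuple $(s_1((\cdot,\theta)),\dots,s_k((\cdot,\theta)))$, a final isomorphism identifies $(p_k,\theta)$ with $(p^\prime,\theta)$ in $\mathcal{P}_R(\alpha)$.

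The main technical obstacle is this lifting step in the backward direction: one must confirm that a homotopy move on $s_i$, whose statement only mentions components and letters of type $i$, extends unambiguously to a homotopy move on the full nanophrase preserving the coloring condition. The key reason it does is that components of types different from $i$ contain no letters of $\alpha_i$, so they can be freely slotted into the sequences $x,y,z,t$ of an H1, H2 or H3 move without interfering with the letters being manipulated, and $\theta$ itself is left untouched throughout.
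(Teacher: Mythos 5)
Your proposal is correct, and one direction matches the paper while the other takes a different (though valid) route. For the implication ``$s_i((p,\theta)) \sim_i s_i((p',\theta))$ for all $i$ implies $(p,\theta) \sim_K (p',\theta)$'', both you and the paper proceed by lifting, one prime factor at a time, the witnessing chain for $\sim_i$ to a chain in $\mathcal{P}_R(\alpha)$: because components of other types carry no letters of type $i$, the move patterns survive, and after processing $i=1,\dotsc,k$ one reaches a nanophrase isomorphic to $p'$. Your justification of the lifting is somewhat more explicit than the paper's (which simply asserts the moves ``can be applied directly to the corresponding components''), but the mechanism is identical.

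For the converse, you diverge from the paper. You analyse a $\sim_K$-chain move by move, using the segregation of $\tau$ and $S$ among the $\alpha_i$ together with the defining property of $\mathcal{P}_R(\alpha)$ to show that every H1, H2 or H3 move touches letters of a single type and hence descends to a move on that $s_i$ (and the identity on the others). The paper instead observes that $(p,\theta)\sim_K(p',\theta)$ already gives $p\sim p'$ under the full $\alpha$-homotopy, applies Lemma~\ref{lem:subphraseinvariance} to obtain $x(p,O_i)\sim x(p',O_i)$, and then invokes Lemma~\ref{lem:subhomotopyinvariance} (the extension of Turaev's Lemma~3.3.1 to general $S$) to pass from $\sim$ to $\sim_i$. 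Your approach is more self-contained and makes transparent exactly why each move stays inside one factor; the paper's is shorter because it reuses two lemmas already established and in fact needs only the weaker hypothesis $p\sim p'$ rather than $p\sim_K p'$. Both arguments are sound.

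One small point worth making explicit at the end of your lifting argument: after all $k$ passes you have isomorphisms $s_i((p_k,\theta))\cong s_i((p',\theta))$ for each $i$ separately; these can be assembled into a single isomorphism $(p_k,\theta)\cong(p',\theta)$ precisely because the alphabets of the various $s_i$ partition the alphabet of the full nanophrase, so the $k$ bijections glue to one bijection that respects $\theta$ and the word structure. You state the conclusion but it is worth noting that this gluing is what makes ``determined up to isomorphism by $\theta$ and the tuple of $s_i$'s'' hold.
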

\begin{proof}
We first assume that 
$s_i((p,\theta)) \sim_i s_i((p^\prime,\theta))$ for all $i$. 
Note that each component of $p$ appears as a component in exactly one of
 the nanophrases $s_i((p,\theta))$.
Since $s_1((p,\theta)) \sim_1 s_1((p^\prime,\theta))$, there is a
 sequence of homotopy moves going from $s_1((p,\theta))$ to
 $s_1((p^\prime,\theta))$ which can be applied directly to the
 corresponding components in $(p,\theta)$.
Call the result $(p_1,\theta)$.
Then $s_1(p_1,\theta)$ is isomorphic to $s_1((p^\prime,\theta))$ and
 $s_j(p_1,\theta)$ is equal to $s_j(p,\theta)$ for all $j$ not equal to
 $1$.
We also have $(p_1,\theta) \sim_K (p,\theta)$.
Starting with $(p_1,\theta)$ we repeat the process, this time using the
 fact that $s_2((p,\theta)) \sim_2 s_2((p^\prime,\theta))$.
The result is $(p_2,\theta)$ which is $\sim_K$ equivalent to
 $(p,\theta)$.
By repeating the process for each $i$, the final result $(p_k,\theta)$
 is $\sim_K$ equivalent to $(p,\theta)$ and isomorphic to
 $(p^\prime,\theta)$.
Thus we have shown that $(p,\theta) \sim_K (p^\prime,\theta)$.
\par
We now assume that $(p,\theta) \sim_K (p^\prime,\theta)$.
This implies that $p \sim p^\prime$.
Now, fixing an integer $i$, let $O_i$ be the subset of $\widehat{\nc(p)}$
consisting of elements $j$ where $\theta(j)$ is equal to $i$.
Then, by definition, $s_i((p,\theta))$ is $x(p,O_i)$ and
 $s_i((p^\prime,\theta))$ is $x(p^\prime,O_i)$. 
Since $p \sim p^\prime$, Lemma~\ref{lem:subphraseinvariance} implies
$x(p,O_i) \sim x(p^\prime,O_i)$ and thus, using
 Lemma~\ref{lem:subhomotopyinvariance},
$x(p,O_i) \sim_i x(p^\prime,O_i)$.
\end{proof}
We define some new moves on elements of $\mathcal{P}_R(\alpha)$ which allow us to
remove or add empty components.
Let $(p,\theta)$ be an element of $\mathcal{P}_R(\alpha)$.
We write $n$ for $\nc(p)$ and $w_j$ for the $j$th component of $p$.
If $w_i$ is $\trivial$ we can apply a simple reduction or concatenating
reduction which are defined as follows.
\par
Simple Reduction: if $i=1$, $i=n$ or $\theta(i-1) \neq \theta(i+1)$,
\begin{equation*}
(w_1|\dotsc|w_{i-1}|\trivial|w_{i+1}|\dotsc|w_n,\theta) \longrightarrow 
(w_1|\dotsc|w_{i-1}|w_{i+1}|\dotsc|w_n,\theta^\prime)
\end{equation*} 
where $\theta^\prime$ is a map from $\widehat{n-1}$ to $\hat{k}$ defined by
\begin{equation*}
\theta^\prime(j) = 
\begin{cases}
\theta(j) & \text{if } j < i \\
\theta(j+1) & \text{if } j \geq i.
\end{cases} 
\end{equation*}
Note that if $n$ is $1$ and $w_1$ is $\trivial$, applying a simple
reduction gives the pair $(\zerocomp,\theta_\emptyset)$ where
$\theta_\emptyset$ is the empty map (from $\emptyset$ to $\hat{k}$). 
The inverse of a simple reduction is called a \emph{simple augmentation}.
It allows us to insert a new empty component into $p$ either between two
components or at one of the ends, as long as we can make a corresponding
change to $\theta$ to get a locally variable map.
\par
Concatenating Reduction: if $i \neq 1$, $i \neq n$ and 
$\theta(i-1) = \theta(i+1)$,
\begin{equation*}
(w_1|\dotsc|w_{i-1}|\trivial|w_{i+1}|\dotsc|w_n,\theta) \longrightarrow 
(w_1|\dotsc|w_{i-1}w_{i+1}|\dotsc|w_n,\theta^\prime)
\end{equation*}
where $\theta^\prime$ is a map from $\widehat{n-2}$ to $\hat{k}$ defined by
\begin{equation*}
\theta^\prime(j) = 
\begin{cases}
\theta(j) & \text{if } j < i \\
\theta(j+2) & \text{if } j \geq i.
\end{cases} 
\end{equation*}
The inverse of this move is called a \emph{splitting augmentation}.
It allows us to split a component of $p$ into two parts (where possibly
one or both may be empty),
as long as we can make a corresponding change to $\theta$ to get a
locally variable map.
\par
Collectively we refer to simple reductions and concatenating reductions
as \emph{reductions} and simple augmentations and splitting
augmentations as \emph{augmentations}.
\begin{rem}
Suppose $(p,\theta)$ is an element of $\mathcal{P}_R(\alpha)$ such that the
 $i$th component of $p$ is empty.
Then by the conditions on $i$ and $\theta$ given in the definitions of
 the reduction moves, there is one and only one way that we can apply a
 reduction to $(p,\theta)$ to remove the $i$th component.
Thus we can unambiguously refer to this reduction as \emph{the reduction
 of the $i$th component} of $(p,\theta)$.
\end{rem}
Let $\sim_P$ be the equivalence relation on $\mathcal{P}_R(\alpha)$ generated by
$\sim_K$, reductions and augmentations.
We can define a map $\Omega$ from $\mathcal{P}_R(\alpha)$ to $\mathcal{N}(\alpha)$ by
mapping $(p,\theta)$ to $\chi(p)$ (recall that $\chi$ is the
concatenating map defined in Section~\ref{sec:nanophrases}).
Then we have the following lemma.
\begin{lem}\label{lem:hinvariance}
Let $(p,\theta)$ and $(p^\prime,\theta^\prime)$ be elements of
 $\mathcal{P}_R(\alpha)$.
If $(p,\theta) \sim_P (p^\prime,\theta^\prime)$ then 
$\Omega((p,\theta)) \sim \Omega((p^\prime,\theta^\prime))$.
\end{lem}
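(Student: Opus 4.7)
The plan is to prove Lemma~\ref{lem:hinvariance} by unwinding the definition of $\sim_P$ into its generators and checking each type one at a time. Since $\sim_P$ is the equivalence relation generated by $\sim_K$, reductions, and augmentations, transitivity reduces the problem to showing that $\Omega$ sends each of these elementary steps to a homotopy of nanowords.

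First I would handle the $\sim_K$ case. Suppose $(p,\theta) \sim_K (p^\prime,\theta)$. By definition this means there is a sequence of homotopy moves and isomorphisms inside $\mathcal{P}_R(\alpha)$ relating $p$ to $p^\prime$, so in particular $p \sim p^\prime$ in $\mathcal{P}(\alpha)$. Applying Lemma~\ref{lem:concatenation} (Fukunaga's lemma) gives $\chi(p) \sim \chi(p^\prime)$, which is exactly $\Omega((p,\theta)) \sim \Omega((p^\prime,\theta))$.

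Next I would handle the reduction and augmentation cases. Since augmentations are inverses of reductions, it suffices to consider reductions. Suppose $(p,\theta)$ reduces to $(p^\prime,\theta^\prime)$ by a simple reduction or a concatenating reduction removing an empty $i$th component of $p$. In both cases $p^\prime$ is obtained from $p$ by removing that empty component (and, for the concatenating version, merging the two neighbouring components into a single component by concatenation). Since $\chi$ simply concatenates all components into one word, inserting or removing an empty component, and likewise merging two adjacent components into one, leaves the underlying Gauss word and its $\alpha$-alphabet untouched. Hence $\chi(p)$ and $\chi(p^\prime)$ are literally equal, so $\Omega((p,\theta)) = \Omega((p^\prime,\theta^\prime))$ and in particular they are homotopic.

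No step here is really an obstacle: the content is entirely encapsulated in Lemma~\ref{lem:concatenation} together with the observation that $\chi$ ignores the component structure. The only thing to be careful about is to spell out both flavours of reduction separately (including the degenerate case $n=1$ where reduction sends the pair to $(\zerocomp,\theta_\emptyset)$, for which $\chi(\zerocomp)=\trivial=\chi(p)$), and to invoke transitivity at the end to pass from single-step relations to arbitrary $\sim_P$-equivalences.
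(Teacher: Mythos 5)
Your proposal is correct and follows essentially the same route as the paper: reduce to the generators of $\sim_P$, dispose of $\sim_K$ via Lemma~\ref{lem:concatenation}, and observe that $\chi$ is literally unchanged by a reduction (hence also by an augmentation). The extra care you take with the $n=1$ degenerate case and the explicit appeal to transitivity are harmless elaborations of what the paper leaves implicit.
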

\begin{proof}
If $(p,\theta) \sim_K (p^\prime,\theta^\prime)$, then
$\theta$ and $\theta^\prime$ are equal and $p \sim p^\prime$.
Then by Lemma~\ref{lem:concatenation}, $\chi(p) \sim \chi(p^\prime)$ and
 so $\Omega((p,\theta)) \sim \Omega((p^\prime,\theta^\prime))$.
So, to prove the result, it is enough to prove the case where
 $(p^\prime,\theta^\prime)$ is derived from $(p,\theta)$ by a
 single reduction.
However in this case, it is easy to see that $\chi(p)$ and
 $\chi(p^\prime)$ must be equal and so
$\Omega((p,\theta)) \sim \Omega((p^\prime,\theta^\prime))$.
\end{proof}
We define a map $\psi$ from $\mathcal{N}(\alpha)$ to $\mathcal{P}_R(\alpha)$ as
follows.
Let $w$ be a nanoword in $\mathcal{N}(\alpha)$.
If $w$ is $\trivial$ then we map $w$ to $\zerocomp$.
Otherwise, there is a unique integer $r > 0$, a unique sequence of words
$w_1, w_2, \dotsc, w_r$ and a unique map
$\theta$ from $\hat{r}$ to $\hat{k}$ such that
\begin{enumerate}
\item
each $w_i$ is not $\trivial$;
\item
$w$ is equal to $w_1w_2 \dotso w_r$;
\item
for all $i$, if $X$ appears in $w_i$, $\xType{X}$ is in
     $\alpha_{\theta(i)}$;
\item
for any $i$ from $1$ to $r-1$, $\theta(i)$ is not equal to
     $\theta(i+1)$.
\end{enumerate}
Define $p$ to be the nanophrase $w_1|w_2|\dotsc|w_r$ where the letters
in $p$ have the same projection as they do in $w$.
Then $(p,\theta)$ is in $\mathcal{P}_R(\alpha)$.
We define $\psi(w)$ to be $(p,\theta)$.
\begin{ex}
Let $\alpha$ be the set $\{a,b\}$.
Suppose that under factorization it factors into two sets, $\{a\}$, which
 we call $\alpha_1$, and $\{b\}$, which we call $\alpha_2$.
Then $k$ is $2$.
\par
Let $\mathcal{A}$ be the $\alpha$-alphabet $\{A,B,C,D,E\}$ with
 projection given by $\xType{A} = \xType{B} = \xType{C} = a$ and
 $\xType{D} = \xType{E} = b$.
Now consider the nanoword $(\mathcal{A},w)$ where $w$ is $ABCBDCAEDE$.
\par
Then $w_1$ is $ABCB$, $w_2$ is $D$, $w_3$ is $CA$ and $w_4$ is $EDE$.
So $n$ is $4$ and $\theta$ is a map from $\hat{4}$ to $\hat{2}$ which
 maps $1$ and $3$ to $1$ and maps $2$ and $4$ to $2$.
Thus $\psi$ maps $(\mathcal{A},w)$ to 
$((\mathcal{A},w_1|w_2|w_3|w_4),\theta)$.
\end{ex}
Note that the image of $\psi$ consists of all elements $(p,\theta)$ in
$\mathcal{P}_R(\alpha)$ such that $p$ has no empty components.
\begin{lem}\label{lem:psiinvariance}
Let $w$ and $w^\prime$ be elements of $\mathcal{N}(\alpha)$.
If $w \sim w^\prime$, then $\psi(w) \sim_P \psi(w^\prime)$.
\end{lem}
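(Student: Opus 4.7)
The plan is to reduce to the case where $w$ and $w'$ are related by a single isomorphism of $\alpha$-alphabets or a single homotopy move, and then in each case exhibit an explicit $\sim_P$-chain from $\psi(w)$ to $\psi(w')$ built from a move in $\mathcal{P}_R(\alpha)$ followed by at most two reductions. The key structural fact driving every case is that each prime factor $\alpha_i$ is $\tau$-invariant and that $S = S_1 \cup \dotsb \cup S_k$ with $S_i \subseteq \alpha_i \times \alpha_i \times \alpha_i$; consequently any two letters related by $\tau$, or any three letters whose types lie in $S$, must share a single $\alpha_i$, and hence sit inside a single run of the decomposition used by $\psi$.

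For an isomorphism of $\alpha$-alphabets, the projections are preserved letterwise, so the isomorphism carries maximal type-homogeneous runs of $w$ to maximal type-homogeneous runs of $w'$ with matching $\theta$-values, giving an isomorphism $\psi(w) \to \psi(w')$ and hence $\psi(w) \sim_K \psi(w')$. For an H3 move, the three involved letters $A,B,C$ all share some $\alpha_i$, so each of the pairs $AB$, $AC$, $BC$ sits in a single component $w_j$ of $\psi(w)$ with $\theta(j) = i$, and applying H3 within those components is a legal $\mathcal{P}_R(\alpha)$-move. Since H3 neither adds nor removes letters nor alters any projection, the decomposition structure on both sides matches, giving $\psi(w) \sim_K \psi(w')$ directly.

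For an H1 move removing a pair $AA$ with $|A| \in \alpha_i$, both $A$'s lie in a single component $w_j$ of $\psi(w)$ with $\theta(j) = i$, so the move is legal in $\mathcal{P}_R(\alpha)$ and produces some $(p'',\theta) \sim_K \psi(w)$. If $w_j \neq AA$, then $(p'',\theta)$ equals $\psi(w')$. If $w_j = AA$, the $j$th component of $p''$ is empty; a simple reduction applies when $j \in \{1, \nc(p)\}$ or $\theta(j-1) \neq \theta(j+1)$, and a concatenating reduction applies when $\theta(j-1) = \theta(j+1)$. In either case the result coincides with $\psi(w')$, because in the former case the neighbors remain separate maximal runs in $w'$, while in the latter they merge into one. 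For an H2 move removing $AB$ from a position in some component $w_{j_1}$ and $BA$ from a position in some component $w_{j_2}$ (possibly $j_1 = j_2$), the $\tau$-invariance of $\alpha_i$ forces $\theta(j_1) = \theta(j_2) = i$, so the move is legal in $\mathcal{P}_R(\alpha)$. Any of $w_{j_1}, w_{j_2}$ that becomes empty is then removed by the forced reduction as in the H1 analysis, and the resulting nanophrase matches $\psi(w')$ regardless of the order in which the two possible reductions are performed.

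The main obstacle is the bookkeeping in the H1 and H2 cases: one must verify that the $\theta$ produced after the reduction(s) agrees with the locally variable map that $\psi$ assigns to $w'$, and that the correct type of reduction (simple versus concatenating) is forced by the neighboring types. This comes down to a straightforward case split on whether the emptied index sits at an endpoint of $\widehat{\nc(p)}$ and on whether the $\theta$-values of its neighbors coincide, both of which are determined by inspection of how $w'$ reassembles its maximal type-homogeneous runs after the removed letters disappear.
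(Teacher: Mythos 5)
Your proof is correct and follows the same route as the paper's: reduce to the case of a single isomorphism or homotopy move, realize the corresponding move inside $\mathcal{P}_R(\alpha)$ so that it witnesses $\sim_K$, and then apply the forced reduction(s) to pass to $\psi(w')$. You have spelled out explicitly why the moves stay in $\mathcal{P}_R(\alpha)$ (the $\tau$-invariance of each $\alpha_i$ and the decomposition $S = S_1 \cup \dotsb \cup S_k$) and which reduction is forced in each subcase, details the paper leaves terse, but the argument is essentially identical.
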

\begin{proof}
It is enough to prove the case where $w$ and $w^\prime$ are related by
 an isomorphism or a single homotopy move.
\par
If $w$ and $w^\prime$ are related by an isomorphism, then the
 nanophrases in $\psi(w)$ and
 $\psi(w^\prime)$ are isomorphic and so
 $\psi(w) \sim_P \psi(w^\prime)$.
\par
We now assume $w$ and $w^\prime$ are related by a single homotopy move.
We write $\psi(w)$ as $(p, \theta)$ and $\psi(w^\prime)$ as
$(p^\prime, \theta^\prime)$.
\par
Suppose $w$ and $w^\prime$ are related by an H1 move.
If $\theta$ and $\theta^\prime$ are equal, then $p$ is related to
 $p^\prime$ by a corresponding H1 move.
If not, we assume that $w^\prime$ is derived from $w$ by removing a
 letter.
By making the corresponding H1 move on $(p, \theta)$ and then applying
 an appropriate reduction we can derive $(p^\prime, \theta^\prime)$.
Thus, in either case, $\psi(w) \sim_P \psi(w^\prime)$.
\par
Suppose $w$ and $w^\prime$ are related by an H2 move.
If $\theta$ and $\theta^\prime$ are equal, then $p$ is related to
 $p^\prime$ by a corresponding H2 move.
If not, we assume that $w^\prime$ is derived from $w$ by removing two
 letters.
By making the corresponding H2 move on $(p, \theta)$ and then applying
 appropriate reductions we can derive $(p^\prime, \theta^\prime)$.
Thus, in either case, $\psi(w) \sim_P \psi(w^\prime)$.
\par
If $w$ and $w^\prime$ are related by an H3 move, then $\theta$ and
 $\theta^\prime$ are equal and $p$ is related to $p^\prime$ by a
 corresponding H3 move.
Thus $\psi(w) \sim_P \psi(w^\prime)$.
\end{proof}
\begin{thm}\label{thm:bijection}
There is a bijection between the homotopy classes of
 $\mathcal{N}(\alpha)$ and the equivalence classes of $\mathcal{P}_R(\alpha)$
 under $\sim_P$.
In other words
\begin{equation*}
\mathcal{N}(\alpha) / \sim \quad \cong \quad \mathcal{P}_R(\alpha) / \sim_P.
\end{equation*} 
\end{thm}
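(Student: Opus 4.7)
The plan is to exhibit the bijection explicitly via the two maps $\Omega$ and $\psi$ defined just above the theorem, and to show they descend to mutually inverse maps on equivalence classes.

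First I would verify that $\Omega$ and $\psi$ descend to well-defined maps
\begin{equation*}
\bar{\Omega}\colon \mathcal{P}_R(\alpha)/\sim_P \longrightarrow \mathcal{N}(\alpha)/\sim,
\qquad
\bar{\psi}\colon \mathcal{N}(\alpha)/\sim \longrightarrow \mathcal{P}_R(\alpha)/\sim_P.
\end{equation*}
For $\bar{\Omega}$ this is immediate from Lemma~\ref{lem:hinvariance}, and for $\bar{\psi}$ from Lemma~\ref{lem:psiinvariance}. It then suffices to show that $\bar{\Omega}$ and $\bar{\psi}$ are inverse to each other.

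The easy direction is $\Omega \circ \psi = \mathrm{id}_{\mathcal{N}(\alpha)}$: by the very definition of $\psi$, if $\psi(w) = (w_1|\dotsc|w_r, \theta)$ then $\chi(w_1|\dotsc|w_r) = w_1w_2\dotso w_r = w$, so this holds on the nose (not merely up to homotopy).

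The main content lies in the other direction: for any $(p,\theta) \in \mathcal{P}_R(\alpha)$, one needs $\psi(\Omega((p,\theta))) \sim_P (p,\theta)$. Write $p = w_1|\dotsc|w_n$. The plan is to apply reductions to $(p,\theta)$ to successively remove all empty components. Whenever some $w_i = \trivial$, the definition of $\mathcal{P}_R(\alpha)$ (local variability of $\theta$) forces exactly one of the reduction moves (simple or concatenating) to be applicable at position $i$, depending on whether $\theta(i-1) = \theta(i+1)$ or not (or whether $i$ is at an end). After finitely many such reductions we arrive at a pair $(q,\phi)$ which still has $(q,\phi) \sim_P (p,\theta)$ and in which $q$ has no empty components. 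Since $\chi$ is unchanged by reductions, $\chi(q) = \chi(p) = \Omega((p,\theta))$. But a pair with no empty components that lies in $\mathcal{P}_R(\alpha)$ is uniquely determined by its concatenation together with the locally variable labelling, and this uniqueness is precisely the characterization used to define $\psi$. Hence $(q,\phi) = \psi(\chi(q)) = \psi(\Omega((p,\theta)))$, as required.

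The main obstacle, though not deep, is carefully confirming that the canonical decomposition guaranteed by $\psi$ coincides with the result of reducing $(p,\theta)$; one must check that at each reduction step the locally variable constraint on the updated $\theta'$ is preserved (this is built into the case distinction defining the two reduction types) and that the process cannot get stuck before all empty components are removed. Once this verification is in hand, the identities $\bar{\Omega} \circ \bar{\psi} = \mathrm{id}$ and $\bar{\psi} \circ \bar{\Omega} = \mathrm{id}$ follow, establishing the bijection.
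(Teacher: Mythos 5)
Your proof is correct and takes essentially the same approach as the paper: both exhibit the bijection via $\psi$ and $\Omega$, using Lemma~\ref{lem:psiinvariance} and Lemma~\ref{lem:hinvariance} to descend to maps on equivalence classes. The paper's own proof is more terse, leaning implicitly on the observation that the image of $\psi$ consists exactly of the pairs with no empty components and that any $(p,\theta)$ can be reduced to such a pair; you spell this step out explicitly and correctly.
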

\begin{proof}
The map $\psi$ induces a map $\psi_P$ from $\mathcal{N}(\alpha) / \sim$
 to $\mathcal{P}_R(\alpha) / \sim_P$, which by Lemma~\ref{lem:psiinvariance} is
 well-defined.
Note that for any nanoword $w$ in $\mathcal{N}(\alpha)$, $\Omega(\psi(w))$ is
 $w$.
Then by Lemma~\ref{lem:hinvariance}, $\psi_P$ is bijective.
\end{proof}
Let $\mathcal{K}(\alpha)$ be the set of equivalence classes of $\mathcal{P}_R(\alpha)$
under $\sim_K$.
Let $c$ be an element of $\mathcal{K}(\alpha)$ and $(p,\theta)$ be an element in
$c$. 
We define $\nc(c)$ to be $\nc(p)$.
Note that if $(q,\theta)$ is another element in $c$ then $\nc(p)$ is equal
to $\nc(q)$, and so $\nc(c)$ is well-defined.
\par
We say that an element $c$ of $\mathcal{K}(\alpha)$ is \emph{$i$-reducible} if
there exists a pair $(p,\theta)$ in $c$ such that the $i$th component of
$p$ is $\trivial$.
\begin{lem}\label{lem:kembedding}
Let $c$ be an element of $\mathcal{K}(\alpha)$.
Suppose $(p,\theta)$ and $(p^\prime,\theta)$ are elements in $c$
 such that $p$ and $p^\prime$ are both $i$-reducible.
Then there exists a sequence of elements of $c$ going from $(p,\theta)$
 to $(p^\prime,\theta)$ such that the nanophrases in each consecutive
 pair are related by a single homotopy move or isomorphism and each 
 nanophrase in the sequence is $i$-reducible.
\end{lem}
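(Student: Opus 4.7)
The plan is to combine Proposition~\ref{prop:factorequivalence} with Lemma~\ref{lem:embedding} to produce a chain of moves from $(p,\theta)$ to $(p^\prime,\theta)$ inside $c$ in which the $i$th component is empty at every step. Set $j_0=\theta(i)$, the type of the $i$th component. By Proposition~\ref{prop:factorequivalence}, $(p,\theta)\sim_K(p^\prime,\theta)$ yields $s_j((p,\theta))\sim_j s_j((p^\prime,\theta))$ for every $j\in\hat{k}$. I would choose each of these chains carefully and then lift them back to $\mathcal{P}_R(\alpha)$ exactly as in the first half of the proof of Proposition~\ref{prop:factorequivalence}, where a move in factor $j$ is realized as a move acting on the components of the ambient nanophrase of type $j$.

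The $i$th component of $p$ appears at some fixed position, say $i_0$, as an empty component inside $s_{j_0}((p,\theta))$, and at the same position inside $s_{j_0}((p^\prime,\theta))$. I would apply Lemma~\ref{lem:embedding} to the homotopy data triple $(\alpha_{j_0},\tau_{j_0},S_{j_0})$ with $O=\{i_0\}$ in order to upgrade the $\sim_{j_0}$-chain between these two subnanophrases to a chain whose every intermediate element also has an empty $i_0$th component. For each factor $j\neq j_0$, the $i$th component of $p$ does not appear in $s_j((p,\theta))$ at all, so any witnessing $\sim_j$-chain suffices without further modification. Concatenating these chains (treating the factor $j_0$ first and then the others in turn, as in Proposition~\ref{prop:factorequivalence}) and lifting the result to $\mathcal{P}_R(\alpha)$ produces the desired sequence: during the $j_0$-stage the $i$th component remains empty because its subnanophrase image does, and during each other stage it is untouched because it is not of type $j$. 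Every intermediate element is $\sim_K$-equivalent to $(p,\theta)$, hence lies in $c$.

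The only point that requires care is the lift itself, which must turn each subnanophrase move into a genuine move in $\mathcal{P}_R(\alpha)$. This is precisely the mechanism already used in the proof of Proposition~\ref{prop:factorequivalence}, and since the letters participating in any move belong to a single component of a single type $j$, the lifted move remains valid and preserves the property of lying in $\mathcal{P}_R(\alpha)$. I therefore do not anticipate any substantive obstacle beyond organizing these bookkeeping details.
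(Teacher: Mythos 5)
Your proposal is correct, but it takes a genuinely different and noticeably longer route than the paper. The paper's proof is a direct mimicry of Lemma~\ref{lem:embedding}: take any chain inside $\mathcal{P}_R(\alpha)$ witnessing $(p,\theta)\sim_K(p',\theta)$, and apply the deletion map $f_{\{i\}}$ (which removes every letter occurring in the $i$th component) to each term. Since $f_{\{i\}}$ only deletes letters it preserves $\mathcal{P}_R(\alpha)$; since $p$ and $p'$ already have an empty $i$th component $f_{\{i\}}$ fixes the endpoints; and by the argument of Lemma~\ref{lem:fpreserveshomotopy} consecutive terms remain related by a single move or isomorphism (or become equal). This produces the desired chain in one stroke, with no appeal to the factor structure. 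Your route instead detours through Proposition~\ref{prop:factorequivalence} to split $\sim_K$ across the prime factors, applies Lemma~\ref{lem:embedding} only to the factor $j_0=\theta(i)$ containing the empty component, and re-lifts the per-factor chains. That is sound: the lift is the one implicit in the first half of Proposition~\ref{prop:factorequivalence}, moves on factors other than $j_0$ never touch the $i$th component, and the $j_0$-chain is chosen so the $i_0$th component of $s_{j_0}$ stays empty throughout. But it buys nothing over the direct $f_{\{i\}}$ argument and makes the lemma depend on Proposition~\ref{prop:factorequivalence} when it need not. The one small advantage of your version is that it makes explicit, via the factor decomposition, where the moves live; the paper's projection argument is shorter and more self-contained.
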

\begin{proof}
This is proved in a similar way to Lemma~\ref{lem:embedding}.
\end{proof}
We note that the equivalence relation $\sim_P$ on the set
$\mathcal{P}_R(\alpha)$ induces an equivalence relation on $\mathcal{K}(\alpha)$ which
we also write $\sim_P$.
\par
Let $(p^\prime,\theta^\prime)$ be the result the reduction of
the $i$th component of $(p,\theta)$ and let $c^\prime$ be the element of
$\mathcal{K}(\alpha)$ which contains $(p^\prime,\theta^\prime)$.
We say that $c^\prime$ is the \emph{$i$-reduction} of $c$.
The following lemma shows that this concept is well-defined.
\begin{lem}\label{lem:reduction-wd}
Let $c$ be an element of $\mathcal{K}(\alpha)$.
If $c^\prime$ and $c^{\prime\prime}$ are $i$-reductions of $c$,
$c^\prime$ equals $c^{\prime\prime}$. 
\end{lem}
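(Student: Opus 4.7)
The plan is to reduce the well-definedness of the $i$-reduction to the cell-by-cell fact that a reduction commutes, in a suitable sense, with single homotopy moves and isomorphisms. Let $(p,\theta)$ and $(q,\theta)$ be two $i$-reducible representatives of $c$, with $i$-reductions $(p',\theta')$ and $(q',\theta'')$. Since both $(p,\theta)$ and $(q,\theta)$ lie in the same $\sim_K$-class, in particular they share the same map $\theta$, and hence $\theta' = \theta''$; so the task is exactly to prove $(p',\theta') \sim_K (q',\theta')$.

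First I would invoke Lemma~\ref{lem:kembedding}, which gives a sequence $(p_0,\theta), (p_1,\theta), \dotsc, (p_n,\theta)$ of elements of $c$, with $p_0 = p$ and $p_n = q$, such that consecutive terms are related by a single homotopy move or isomorphism, and every $p_j$ has empty $i$th component. I would then apply the reduction of the $i$th component to each $(p_j, \theta)$, obtaining a sequence $(p_j', \theta')$ in $\mathcal{P}_R(\alpha)$. Because $\theta$ is the same at every step, the updated map $\theta'$ produced by the reduction (which depends only on $\theta$ and on the conditional distinguishing a simple from a concatenating reduction) is the same at every step as well.

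The key step is then to verify that consecutive $(p_j', \theta')$ and $(p_{j+1}', \theta')$ are again related by a single homotopy move or isomorphism. The point is that any homotopy move or isomorphism that relates two $i$-reducible nanophrases cannot touch any letters of the $i$th component, because that component is empty in both. The $i$th component sits simply as an extra $|$ (or pair of $|$'s around an empty string) in the Gauss phrase presentation, and removing this $|$ (resp.\ concatenating the two surrounding components) is compatible with rewriting a subword $xAAy$, $xAByBAz$, or $xAByACzBCt$ elsewhere in the phrase: the $x,y,z,t$ placeholders explicitly allow $|$ symbols, and deleting a $|$ that lies inside one of them preserves the form of the move. In each of the three move types, and for both flavours of reduction, the rewrite pattern at the non-$i$ positions is unchanged, so an H$k$-move on $(p_j,\theta)$ transports to an H$k$-move on $(p_j',\theta')$, and an isomorphism transports to an isomorphism.

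Granting this, the reduced sequence witnesses $(p_0',\theta') \sim_K (p_n',\theta')$, which is the desired equality $c' = c''$. The main obstacle is the bookkeeping in the last paragraph, especially the concatenating reduction case when a single move involves both the component $w_{i-1}$ and the component $w_{i+1}$; but because the joining boundary falls inside one of the wildcards $x,y,z,t$, no homotopy move pattern is disrupted. Everything else in the argument is formal and follows Lemma~\ref{lem:kembedding} together with the definitions of the reduction moves.
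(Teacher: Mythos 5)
Your proof is correct and takes essentially the same route as the paper's: both invoke Lemma~\ref{lem:kembedding} to obtain a sequence of $i$-reducible nanophrases in $c$ connecting the two representatives, then reduce the $i$th component at every step and check that each single move or isomorphism survives. The only difference is that you spell out the final verification (that a move on an $i$-reducible nanophrase transports to a move after the reduction) where the paper leaves it as ``easy to check.''
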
 
\begin{proof}
As $c^\prime$ is an $i$-reduction of $c$, there exists $(p,\theta)$ in
 $c$ such that the $i$th component of $p$ is $\trivial$ and the
 reduction of the $i$th component of $(p,\theta)$ gives
 $(p^\prime,\theta^\prime)$ in $c^\prime$.
Similarly, as $c^{\prime\prime}$ is an $i$-reduction of $c$, there
 exists $(q,\theta)$ in $c$ such that the $i$th component of $q$ is
 $\trivial$ and the 
 reduction of the $i$th component of $(q,\theta)$ gives
 $(q^\prime,\theta^{\prime\prime})$ in $c^{\prime\prime}$.
By Lemma~\ref{lem:kembedding} there exists a sequence of
 nanophrases $p_j$ going from $p$ to $q$ such that each $(p_j,\theta)$
 is in $c$ and the $i$th component of $p_j$ is $\trivial$.
Then for each $j$ it is easy to check that the reduction of the $i$th
 component of $(p_j,\theta)$ is equivalent, under $\sim_K$ to the
 reduction of the $i$th component of $(p_{j+1},\theta)$.
Thus $(p^\prime,\theta^\prime) \sim_K (q^\prime,\theta^{\prime\prime})$
 and so $c^\prime$ equals $c^{\prime\prime}$.
\end{proof}
We say that an element $c$ of $\mathcal{K}(\alpha)$ is \emph{reducible} if it is
$i$-reducible for some $i$.
An element $c$ of $\mathcal{K}(\alpha)$ is \emph{irreducible} if it is not
reducible.
\begin{lem}\label{lem:ijreducible}
Let $c$ be an element of $\mathcal{K}(\alpha)$ and
let $i$ and $j$ be different integers in $\widehat{\nc(c)}$.
If $c$ is both $i$-reducible and $j$-reducible, then there exists an
 element $(p,\theta)$ in $c$ such that both the $i$th and $j$th
 components of $p$ are $\trivial$.
\end{lem}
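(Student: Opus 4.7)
The plan is to start from the two elements $(p_1,\theta)$ and $(p_2,\theta)$ in $c$ witnessing $i$-reducibility and $j$-reducibility respectively (so $c_i(p_1)=\trivial$ and $c_j(p_2)=\trivial$), and to produce a new representative by deleting from $p_2$ every letter that occurs in its $i$th component. Concretely, set $p := f_{\{i\}}(p_2)$ and take $(p,\theta)$ as the candidate; everything then reduces to checking three things.

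First I would verify that $(p,\theta)$ has the desired empty components. The $i$th component of $p$ is $\trivial$ by the very definition of $f_{\{i\}}$, while the $j$th component was already $\trivial$ in $p_2$, and deleting letters from other components cannot repopulate it. Second, I would check that $(p,\theta)\in\mathcal{P}_R(\alpha)$. The map $\theta$ is unchanged, hence still locally variable, and the projection condition defining $\mathcal{P}_R(\alpha)$ is inherited from $(p_2,\theta)$ because $f_{\{i\}}$ only deletes letters; it never introduces new ones or changes components' indices.

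The nontrivial step is showing $(p,\theta)\sim_K(p_1,\theta)$, so that $(p,\theta)\in c$. Since $(p_1,\theta)$ and $(p_2,\theta)$ lie in $c$, we have $p_1\sim p_2$. Now apply Lemma~\ref{lem:simplify} with the roles of $p$ and $p^\prime$ played by $p_2$ and $p_1$ respectively, and with $O=\{i\}\subseteq O(p_1)$; this yields $p_2\sim f_{\{i\}}(p_2)=p$. Combining with $p_1\sim p_2$ gives $p_1\sim p$. Since both $(p_1,\theta)$ and $(p,\theta)$ are already in $\mathcal{P}_R(\alpha)$, Lemma~\ref{lem:restrictionembedding} upgrades the nanophrase-level homotopy to $(p_1,\theta)\sim_K(p,\theta)$, placing $(p,\theta)$ in $c$.

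I do not expect any serious obstacle; the one point to be careful about is that Lemma~\ref{lem:simplify} only produces a homotopy in $\mathcal{P}(\alpha)$, not directly in $\mathcal{P}_R(\alpha)$, so the appeal to Lemma~\ref{lem:restrictionembedding} at the end is essential to bridge $\sim$ and $\sim_K$. The argument also generalizes in the obvious way to show that if $c$ is $i_1$-, $\dots$, $i_r$-reducible then some representative has all the components $i_1,\dots,i_r$ empty, by iterating the construction $r$ times, though this extension is not needed here.
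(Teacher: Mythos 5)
Your proof is correct and takes essentially the same approach as the paper: the paper starts from the $i$-empty representative $(q,\theta)$ and deletes the letters of its $j$th component via $f_{\{j\}}$, whereas you start from the $j$-empty representative $(p_2,\theta)$ and apply $f_{\{i\}}$; the two are symmetric. The only small difference is that the paper justifies $(q,\theta)\sim_K(f_{\{j\}}(q),\theta)$ by appealing to ``a similar argument to the proof of Lemma~\ref{lem:simplify},'' while you make this explicit by invoking Lemma~\ref{lem:simplify} to get $\sim$ and then Lemma~\ref{lem:restrictionembedding} to upgrade it to $\sim_K$, which is a cleaner and more rigorous way to phrase the same step.
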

\begin{proof}
As $c$ is $i$-reducible, there exists an element $(q,\theta)$ in $c$
 such that the $i$th component of $q$ is $\trivial$.
As $c$ is $j$-reducible, there exists an element $(q^\prime,\theta)$ in
 $c$ such that the $j$th component of $q^\prime$ is $\trivial$.
Recall that for a nanophrase $p$ and a subset $O$ of
 $\widehat{\nc(p)}$, $f_O$ maps $p$ to the nanophrase derived from $p$ by
 deleting all letters that appear at least once in any component with
 index appearing in $O$. 
Let $O$ be the set $\{j\}$.
As $(q,\theta) \sim_K (q^\prime,\theta)$, by a similar argument to the
 proof of Lemma~\ref{lem:simplify}, we can show that
$(q,\theta) \sim_K (f_O(q),\theta)$.
Thus $(f_O(q),\theta)$ is in $c$ and both the $i$th and $j$th components
 of $f_O(q)$ are $\trivial$.
\end{proof}
\begin{lem}\label{lem:confluence}
Let $c$ be an element of $\mathcal{K}(\alpha)$ and
let $i$ and $j$ be integers in $\widehat{\nc(c)}$ with $i$ less
 than $j$.
Suppose $c$ is both $i$-reducible and $j$-reducible.
Let $c_i$ be the result of the $i$-reduction of $c$ and let $c_j$ be the
 result of the $j$-reduction of $c$.
Then one of the following cases holds.
\begin{enumerate}
\item\label{case:equality}
$c_i$ is equal to $c_j$;
\item\label{case:lefttriangle}
$c_i$ is a reduction of $c_j$;
\item\label{case:righttriangle}
$c_j$ is a reduction of $c_i$;
\item\label{case:diamond}
there exists an element $c^\prime$ of $\mathcal{K}(\alpha)$ which is a
     reduction both of $c_i$ and $c_j$.
\end{enumerate} 
\end{lem}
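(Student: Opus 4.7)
The plan is to choose a single representative of $c$ in which both of the required components are simultaneously empty, and then analyse how the two reductions interact. By Lemma~\ref{lem:ijreducible} there exists $(p,\theta)\in c$ whose $i$th and $j$th components are both $\trivial$, and by Lemma~\ref{lem:reduction-wd} both $c_i$ and $c_j$ may be computed from this common representative. The goal is to produce, whenever possible, a common further reduction $c^\prime$ of $c_i$ and $c_j$ by performing the two reductions in both orders, placing us in case~\ref{case:diamond}; the remaining cases should only arise when a single concatenating reduction already removes both empty components at once.

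I would first treat the case $j \ge i + 2$, in which the two empty components are separated. Each reduction affects only the three positions adjacent to its own empty component, so the reduction at $i$ leaves the (re-indexed) $j$th component empty, and does not alter the $\theta$-condition determining whether the subsequent reduction there is simple or concatenating; symmetrically for the opposite order. Writing out the underlying nanophrases and locally variable maps produced by the two orders shows that they agree, and since $j - i \ge 2$ neither $c_i$ nor $c_j$ can already coincide with this double reduction. This places us in case~\ref{case:diamond}.

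The remaining case is $j = i + 1$, where the empty components are adjacent; here a concatenating reduction at $i$ merges $w_{i-1}$ with $w_{i+1} = \trivial$, so it removes both empty components at once, and symmetrically for a concatenating reduction at $j$. I would split into four subcases according to the types of the two reductions. If both are simple, the commutation argument of the previous paragraph applies unchanged, giving case~\ref{case:diamond}. If the $i$-reduction is concatenating and the $j$-reduction is simple, then $c_i$ has already absorbed both empty components while $c_j$ has removed only the $j$th; the conditions $\theta(i-1)=\theta(j)$ and $\theta(i)\ne\theta(j+1)$ together with local variability force a simple reduction at the surviving empty component of $c_j$, and one verifies that its output agrees with $c_i$, giving case~\ref{case:lefttriangle}. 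The symmetric subcase yields case~\ref{case:righttriangle}. Finally, if both reductions are concatenating, the conditions $\theta(i-1)=\theta(j)$ and $\theta(i)=\theta(j+1)$ force the underlying nanophrases and locally variable maps of $c_i$ and $c_j$ to coincide, giving case~\ref{case:equality}.

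The conceptual content is small: a single reduction affects only a narrow window of the phrase, and reductions in disjoint windows commute. The main obstacle is the combinatorial bookkeeping, since a simple reduction shifts subsequent indices by one while a concatenating reduction shifts them by two; one must check in each subcase that the $\theta$-condition required for the second reduction in each order is the same, and that the resulting underlying nanophrases and maps agree. This verification is routine but constitutes the bulk of the proof.
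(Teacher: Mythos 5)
Your proof is correct and follows essentially the same approach as the paper's: pick a common representative $(p,\theta)$ with both the $i$th and $j$th components empty via Lemma~\ref{lem:ijreducible}, then split on $j-i$ and on the types (simple/concatenating) of the two reductions, exhibiting the diamond when they commute and noting that the degenerate cases occur only when a concatenating reduction absorbs both empty components.

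One small point of comparison worth flagging. In the $j=i+1$ case your assignment of Case~\ref{case:lefttriangle} to (concatenating $i$-reduction, simple $j$-reduction) and Case~\ref{case:righttriangle} to its mirror is the correct one: a concatenating reduction drops two components, so $c_i$ ends up with fewer components than $c_j$ and can only be a \emph{reduction of} $c_j$, not vice versa. The paper's prose attaches these two labels the other way around (its Figures~\ref{fig:gap1sc} and~\ref{fig:gap1cs} show the correct triangles, but the sentences citing Case~\ref{case:lefttriangle} and Case~\ref{case:righttriangle} have them swapped). So your bookkeeping is actually more careful than the published text here. The one place you are terser than the paper is the both-simple, $j=i+1$ subcase: you say the commutation argument ``applies unchanged,'' but in fact the follow-up reduction of $c_i$ and $c_j$ changes type depending on whether $\theta(i-1)=\theta(i+2)$ (becoming concatenating if so); the paper spells out the two subcases. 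Either way the common $c^\prime$ exists and Case~\ref{case:diamond} holds, so the conclusion is unaffected; it would just be cleaner to acknowledge the type change explicitly.
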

\begin{proof}
By Lemma~\ref{lem:ijreducible} we know that $c$ contains an element
 $(p,\theta)$ such that the $i$th and $j$th components of $p$ are
 $\trivial$.
We write $(p_i,\theta_i)$ for the $i$-reduction of $p$ and 
$(p_j,\theta_j)$ for the $j$-reduction of $p$.
Then $(p_i,\theta_i)$ is in $c_i$ and $(p_j,\theta_j)$ is in $c_j$.
\par
Assume that $j$ is greater than $i+2$.
Then the set of components involved in the $i$-reduction and the set of
 components involved in the $j$-reduction have no components in common.
So $(p_j,\theta_j)$ will be $i$-reducible and the $i$-reduction will be
 of the same type as the $i$-reduction of $(p,\theta)$. 
Similarly, there will be a reduction available in $(p_i,\theta_i)$
 corresponding to the $j$-reduction of $(p,\theta)$.
If the $i$-reduction of $(p,\theta)$ is simple, $(p_i,\theta_i)$ will be
 $(j-1)$-reducible.
If the $i$-reduction of $(p,\theta)$ is concatenating, $(p_i,\theta_i)$
 will be $(j-2)$-reducible.
It is easy to check that in all cases, applying the two reductions in
 either order gives the same result.
\par
Let $(p^\prime,\theta^\prime)$ be the result of applying an
 $i$-reduction to $(p_j,\theta_j)$ and let $c^\prime$ be the equivalence
 class of $(p^\prime,\theta^\prime)$ under $\sim_K$.
Then as $c^\prime$ is a reduction of both $c_i$ and $c_j$,
 Case~\ref{case:diamond} in the statement of the lemma holds.
\par
For example, the case where both reductions are simple is shown in
 Figure~\ref{fig:independent-simple}.
In the figure $w_t$ is the $t$th component of $p$ for each $t$ and
 labels on the arrows indicate which component is being reduced.
\begin{figure}[hbt]
\begin{center}
\begin{picture}(300,140)
\put(70,120){\makebox(160,20){$(w_1|\dotsc|w_{i-1}|\trivial|w_{i+1}|\dotsc|w_{j-1}|\trivial|w_{j+1}|\dotsc|w_n,\theta)$}}
\put(10,80){\makebox(140,20){$(w_1|\dotsc|w_{i-1}|w_{i+1}|\dotsc|w_{j-1}|\trivial|w_{j+1}|\dotsc|w_n,\theta_i)$}}
\put(150,40){\makebox(140,20){$(w_1|\dotsc|w_{i-1}|\trivial|w_{i+1}|\dotsc|w_{j-1}|w_{j+1}|\dotsc|w_n,\theta_j)$}}
\put(90,0){\makebox(120,20){$(w_1|\dotsc|w_{i-1}|w_{i+1}|\dotsc|w_{j-1}|w_{j+1}|\dotsc|w_n,\theta^\prime)$}}
\put(120,120){\vector(-1,-1){20}}
\put(180,120){\vector(1,-1){60}}
\put(200,40){\vector(-1,-1){20}}
\put(60,80){\vector(1,-1){60}}
\put(115,105){$i$}
\put(220,85){$j$}
\put(60,45){$j-1$}
\put(200,25){$i$}
\end{picture}
\caption{Simple reductions}
\label{fig:independent-simple}
\end{center}
\end{figure}
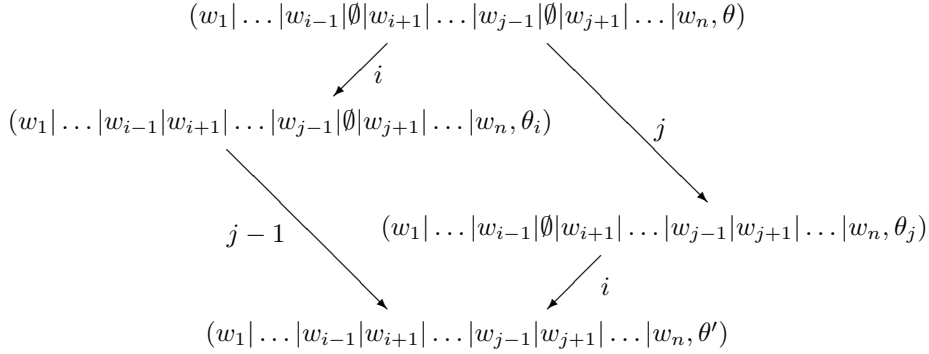
\par
Now assume that $j$ is $i+2$.
If the $i$-reduction or $j$-reduction of $(p,\theta)$ is simple then the
 situation is similar to the case where $j$ is greater than $i+2$ and so 
 Case~\ref{case:diamond} in the statement of the lemma holds.
The case where both the $i$-reduction and the $j$-reduction of
 $(p,\theta)$ are concatenating is shown in Figure~\ref{fig:merge}.
Note that because both reductions are concatenating, 
$\theta(i-1)$, $\theta(i+1)$ and $\theta(i+3)$ are all equal.
Thus, again, Case~\ref{case:diamond} in the statement of the lemma
 holds.
\begin{figure}[hbt]
\begin{center}
\begin{picture}(300,100)
\put(80,80){\makebox(140,20){$(w_1|\dotsc|w_{i-1}|\trivial|w_{i+1}|\trivial|w_{i+3}|\dotsc|w_n,\theta)$}}
\put(0,40){\makebox(120,20){$(w_1|\dotsc|w_{i-1}w_{i+1}|\trivial|w_{i+3}|\dotsc|w_n,\theta_i)$}}
\put(180,40){\makebox(120,20){$(w_1|\dotsc|w_{i-1}|\trivial|w_{i+1}w_{i+3}|\dotsc|w_n,\theta_j)$}}
\put(100,0){\makebox(100,20){$(w_1|\dotsc|w_{i-1}w_{i+1}w_{i+3}|\dotsc|w_n,\theta^\prime)$}}
\put(120,80){\vector(-1,-1){20}}
\put(180,80){\vector(1,-1){20}}
\put(200,40){\vector(-1,-1){20}}
\put(100,40){\vector(1,-1){20}}
\put(115,65){$i$}
\put(160,65){$i+2$}
\put(95,25){$i$}
\put(200,25){$i$}
\end{picture}
\caption{Concatenating reductions when $j=i+2$}
\label{fig:merge}
\end{center}
\end{figure}
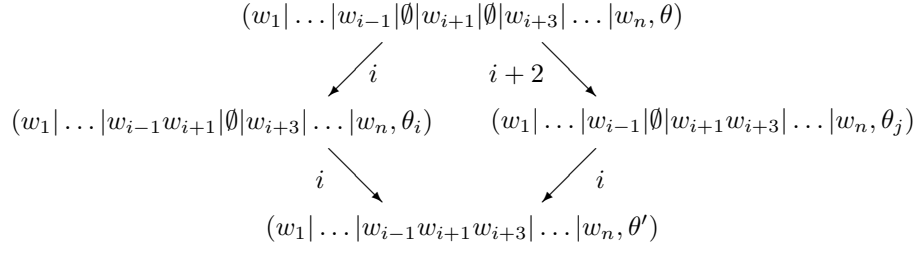
\par
Now assume that $j$ is $i+1$.
We consider cases based on the types of the reductions of $(p,\theta)$.
\par
If both the $i$-reduction and the $j$-reduction are concatenating then
 the results of the reductions are equivalent, as shown in
 Figure~\ref{fig:gap1cc}.
Thus Case~\ref{case:equality} in the statement of the lemma
 holds.
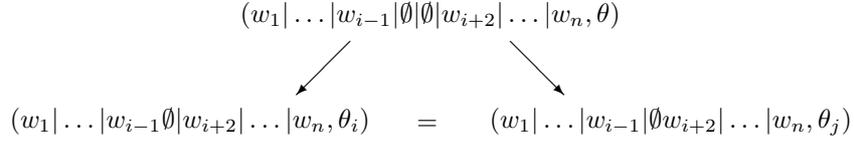
\begin{figure}[hbt]
\begin{center}
\begin{picture}(300,60)
\put(80,40){\makebox(140,20){$(w_1|\dotsc|w_{i-1}|\trivial|\trivial|w_{i+2}|\dotsc|w_n,\theta)$}}
\put(0,0){\makebox(120,20){$(w_1|\dotsc|w_{i-1}\trivial|w_{i+2}|\dotsc|w_n,\theta_i)$}}
\put(180,0){\makebox(120,20){$(w_1|\dotsc|w_{i-1}|\trivial w_{i+2}|\dotsc|w_n,\theta_j)$}}
\put(145,7){$=$}
\put(120,40){\vector(-1,-1){20}}
\put(180,40){\vector(1,-1){20}}
\end{picture}
\caption{Concatenating reductions when $j=i+1$}
\label{fig:gap1cc}
\end{center}
\end{figure}
\par
The case where the $i$-reduction is simple and the $j$-reduction is
 concatenating is shown in Figure~\ref{fig:gap1sc}.
Thus Case~\ref{case:lefttriangle} in the statement of the lemma
 holds.
\begin{figure}[hbt]
\begin{center}
\begin{picture}(300,100)
\put(80,80){\makebox(140,20){$(w_1|\dotsc|w_{i-1}|\trivial|\trivial|w_{i+2}|\dotsc|w_n,\theta)$}}
\put(0,40){\makebox(120,20){$(w_1|\dotsc|w_{i-1}|\trivial|w_{i+2}|\dotsc|w_n,\theta_i)$}}
\put(90,0){\makebox(120,20){$(w_1|\dotsc|w_{i-1}|\trivial w_{i+2}|\dotsc|w_n,\theta_j)$}}
\put(120,80){\vector(-1,-1){20}}
\put(100,40){\vector(1,-1){20}}
\put(180,80){\vector(0,-1){60}}
\put(115,65){$i$}
\put(190,45){$i+1$}
\put(95,25){$i$}
\end{picture}
\caption{Simple $i$-reduction, concatenating $j$-reduction when $j=i+1$}
\label{fig:gap1sc}
\end{center}
\end{figure}
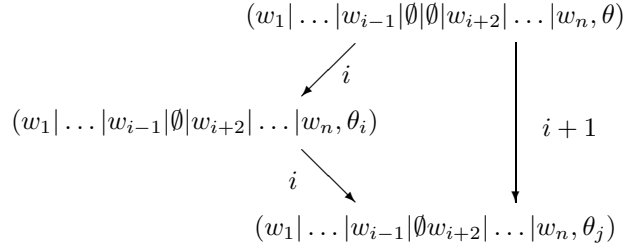
\par
The case where the $i$-reduction is concatenating and the $j$-reduction
 is simple is shown in Figure~\ref{fig:gap1cs}.
Thus Case~\ref{case:righttriangle} in the statement of the lemma
 holds.
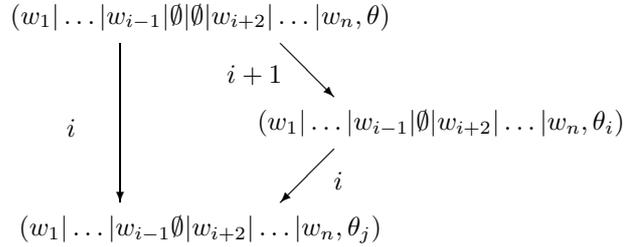
\begin{figure}[hbt]
\begin{center}
\begin{picture}(300,100)
\put(80,80){\makebox(140,20){$(w_1|\dotsc|w_{i-1}|\trivial|\trivial|w_{i+2}|\dotsc|w_n,\theta)$}}
\put(180,40){\makebox(120,20){$(w_1|\dotsc|w_{i-1}|\trivial|w_{i+2}|\dotsc|w_n,\theta_i)$}}
\put(90,0){\makebox(120,20){$(w_1|\dotsc|w_{i-1}\trivial|w_{i+2}|\dotsc|w_n,\theta_j)$}}
\put(180,80){\vector(1,-1){20}}
\put(200,40){\vector(-1,-1){20}}
\put(120,80){\vector(0,-1){60}}
\put(160,65){$i+1$}
\put(100,45){$i$}
\put(200,25){$i$}
\end{picture}
\caption{Concatenating $i$-reduction, simple $j$-reduction when $j=i+1$}
\label{fig:gap1cs}
\end{center}
\end{figure}
\par
The case where both the $i$-reduction and the $j$-reduction are simple
 splits into two subcases.
The first subcase is when $i$ is $1$ or $j$ is $n$ or $\theta(i-1)$ is
 not equal to $\theta(i+2)$.
Then $(p_i,\theta_i)$ is $i$-reducible by a simple reduction and
 $(p_j,\theta_j)$ is also $i$-reducible by a simple reduction.
Either reduction gives the same result and so Case~\ref{case:diamond} in
 the statement of the lemma holds.
\par
The second subcase is when $i$ is not $1$, $j$ is not $n$ and $\theta(i-1)$ is
 equal to $\theta(i+2)$.
Then both $(p_i,\theta_i)$ and $(p_j,\theta_j)$ are $i$-reducible by
 concatenating reductions as shown in Figure~\ref{fig:gap1ss}.
Note that in this case $p_i$ is equal to $p_j$, but as $\theta(i)$ is
 not equal to $\theta(i+1)$, $\theta_i$ and $\theta_j$ are different.
From the figure we see that Case~\ref{case:diamond} in the statement of
 the lemma holds.
\begin{figure}[hbt]
\begin{center}
\begin{picture}(300,100)
\put(80,80){\makebox(140,20){$(w_1|\dotsc|w_{i-1}|\trivial|\trivial|w_{i+2}|\dotsc|w_n,\theta)$}}
\put(0,40){\makebox(120,20){$(w_1|\dotsc|w_{i-1}|\trivial|w_{i+2}|\dotsc|w_n,\theta_i)$}}
\put(180,40){\makebox(120,20){$(w_1|\dotsc|w_{i-1}|\trivial|w_{i+2}|\dotsc|w_n,\theta_j)$}}
\put(100,0){\makebox(100,20){$(w_1|\dotsc|w_{i-1}w_{i+2}|\dotsc|w_n,\theta^\prime)$}}
\put(120,80){\vector(-1,-1){20}}
\put(180,80){\vector(1,-1){20}}
\put(200,40){\vector(-1,-1){20}}
\put(100,40){\vector(1,-1){20}}
\put(115,65){$i$}
\put(160,65){$i+1$}
\put(95,25){$i$}
\put(200,25){$i$}
\end{picture}
\caption{Simple reductions when $j=i+1$ and $\theta(i-1)=\theta(i+2)$}
\label{fig:gap1ss}
\end{center}
\end{figure}
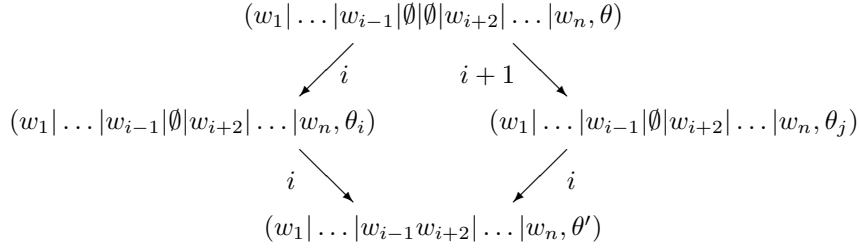
\end{proof}
Let $F$ be a set with an equivalence relation defined on it in terms of
 two types of moves, `reducing' moves and the inverses of these moves.
An element of $F$ is said to be \emph{reduced} if it does not admit a
reducing move.
A \emph{reducing chain} of elements of $F$ is a sequence 
$g_1, g_2, \dotsc, g_n$ such that $g_{i+1}$ is the result of a reducing
move applied to $g_i$ for all $i$.
\par
The following lemma, known as the Diamond Lemma was proved by Newman in
\cite{Newman:diamond}.
The version we give here is based on the version of the lemma appearing
in \cite[page~26]{Cohn:Algebra3}.
\begin{lem}[Diamond Lemma, Newman]\label{lem:diamond}
Let $F$ be a set as above, such that the following conditions are
 satisfied:
\begin{enumerate}
\item
Finiteness condition. For each element $g$ in $F$ there exists an
     integer $r$ (depending on $g$) such that no reducing chain starting
     from $g$ has more than $r$ terms.
\item
Confluence condition. If an element $g$ in $F$ can be transformed to
     $g_1$ by one reducing move and $g_2$ by another, then there exists
     an element $g^\prime$ such that, for each $i$, either $g^\prime$ is
     equal to $g_i$ or $g_i$ can be transformed into $g^\prime$ by
     applying one or more reducing moves.
\end{enumerate}
Then each equivalence class of $F$ contains exactly one reduced
 element.
\end{lem}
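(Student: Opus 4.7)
The plan is to prove existence and uniqueness of a reduced element in each equivalence class separately.

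For existence, given any element $g$ in $F$, I would iteratively apply reducing moves to produce a reducing chain starting at $g$. By the finiteness condition this chain has length at most $r$, so the process terminates in an element $g_n$ to which no reducing move applies---that is, a reduced element. Since every step of the chain relates equivalent elements, $g_n$ lies in the equivalence class of $g$.

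For uniqueness, the key step is to upgrade the confluence condition, which is local in nature, to a global statement: if $g$ can be reduced by some (possibly long) sequence of reducing moves to both $g_1$ and $g_2$, then there exists a common element $g'$ to which both $g_1$ and $g_2$ can be reduced. I would prove this by Noetherian induction. Define a strict partial order on $F$ by declaring $g > h$ whenever $h$ is obtained from $g$ by at least one reducing move; the finiteness condition makes this order well-founded, so induction is valid. For the inductive step, take the first one-step reductions $g \to g_1'$ and $g \to g_2'$ along the chains to $g_1$ and $g_2$ respectively. The confluence condition furnishes a common reduct $h$ of $g_1'$ and $g_2'$. Applying the induction hypothesis at $g_1'$ (which is smaller than $g$) to the pair $(g_1, h)$ yields a common reduct $h_1$ of these two, and then applying the induction hypothesis at $g_2'$ to the pair $(h_1, g_2)$ produces the desired common reduct $g'$. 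This splicing is the heart of the argument and is the main obstacle: one must carefully identify which pairs lie below which element so that the inductive conclusions can be chained together.

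With global confluence established, I would deduce uniqueness by induction on the length of a zig-zag chain $g = f_0 \leftrightarrow f_1 \leftrightarrow \cdots \leftrightarrow f_n = h$ connecting two equivalent elements. Whenever the chain contains a peak $f_{i-1} \leftarrow f_i \to f_{i+1}$, global confluence resolves it into a common reduct below both $f_{i-1}$ and $f_{i+1}$; substituting this reduct and absorbing the surrounding arrows shortens the chain (or reduces the number of peaks). Iterating shows that any two equivalent elements share a common reduct. If both $g$ and $h$ are reduced, this common reduct must equal each of them, so $g = h$, completing the proof.
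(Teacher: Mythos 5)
The paper does not prove this lemma; it simply cites Newman's original paper and Cohn's textbook and invokes the result. So there is no in-paper argument to compare against. Your proof is the standard one for Newman's lemma, and its overall structure (existence by termination, global confluence by Noetherian induction on the well-founded order induced by reduction, then Church--Rosser and uniqueness) is right. The Noetherian-induction step is carried out correctly: you split off the first reductions on each side, apply local confluence to get $h$, then apply the inductive hypothesis twice, first at $g_1'$ and then at $g_2'$, checking that the necessary reachability relations hold.

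The one imprecise spot is the final deduction of Church--Rosser from global confluence. Resolving a peak $f_{i-1} \leftarrow f_i \to f_{i+1}$ replaces a single peak with a (possibly long) valley $f_{i-1} \to^* g' \leftarrow^* f_{i+1}$, which in general neither shortens the chain (measured by the number of links) nor reduces the number of peaks --- e.g.\ if $f_{i+1} \to f_{i+2}$ in the original chain, then $f_{i+1}$ becomes a new peak. A clean fix is to induct on the number of links in the zig-zag: given $g \leftrightarrow f_1 \sim h$, apply the induction hypothesis to $f_1 \sim h$ to get a common reduct $k$; if $g \to f_1$ then $k$ already works, and if $f_1 \to g$ then global confluence applied to $f_1 \to g$ and $f_1 \to^* k$ supplies the common reduct of $g$ and $h$. (Alternatively, one can make the peak-resolution argument rigorous by ordering chains by the multiset of their peak elements under the well-founded reduction order, since every new element introduced by a resolution lies strictly below the removed peak.) With either repair the argument is complete.
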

\begin{prop}\label{prop:reduction-wd}
Each equivalence class of $\mathcal{K}(\alpha)$ under $\sim_P$ contains exactly
 one reduced element.
\end{prop}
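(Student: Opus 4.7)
The plan is to apply the Diamond Lemma (Lemma~\ref{lem:diamond}) to $F = \mathcal{K}(\alpha)$, where the reducing moves are taken to be the $i$-reductions (for any $i$) defined on equivalence classes under $\sim_K$. First I would note that Lemma~\ref{lem:reduction-wd} guarantees that each $i$-reduction is a well-defined operation on elements of $\mathcal{K}(\alpha)$, so it makes sense to speak of reducing chains in $F$; moreover, the equivalence relation generated by reducing moves and their inverses is exactly $\sim_P$ restricted to $\mathcal{K}(\alpha)$, by the definition of $\sim_P$. An element $c$ of $\mathcal{K}(\alpha)$ is reduced in the sense of the Diamond Lemma if and only if it is not $i$-reducible for any $i$, i.e., if and only if it is irreducible in the sense defined just after Lemma~\ref{lem:reduction-wd}.

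Next I would verify the finiteness condition. Every reducing move, whether simple or concatenating, strictly decreases the number of components of any representative nanophrase (by $1$ or $2$, respectively). Since $\nc(c)$ is well-defined on $\mathcal{K}(\alpha)$, any reducing chain starting from $c$ has length at most $\nc(c)$, establishing the finiteness condition with $r = \nc(c)$.

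For the confluence condition, I would invoke Lemma~\ref{lem:confluence} directly. Suppose $c$ can be transformed to $c_1$ by an $i$-reduction and to $c_2$ by a $j$-reduction. Without loss of generality $i \leq j$, and if $i = j$ then Lemma~\ref{lem:reduction-wd} already gives $c_1 = c_2$. Otherwise $i < j$, and Lemma~\ref{lem:confluence} tells us that one of four mutually compatible alternatives holds: either $c_1 = c_2$, or one of $c_1, c_2$ is obtained from the other by a single reducing move, or both admit reducing moves to a common class $c'$. In every alternative, there exists a class $c'$ such that each $c_\ell$ either equals $c'$ or can be transformed to $c'$ by one or more reducing moves, which is precisely the confluence condition.

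Having checked both hypotheses of the Diamond Lemma, the conclusion is that each equivalence class under $\sim_P$ contains exactly one reduced, i.e. irreducible, element, as claimed. The only subtlety in executing this plan is that the confluence data is provided in the slightly richer four-case form of Lemma~\ref{lem:confluence} rather than in the pure ``diamond'' form; I expect the main (but still routine) step to be spelling out that the triangle cases in Lemma~\ref{lem:confluence} are legitimate instances of the Diamond Lemma's confluence condition, since that condition explicitly allows $g' = g_i$ for one of the branches.
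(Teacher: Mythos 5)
Your proposal is correct and follows essentially the same strategy as the paper: apply Newman's Diamond Lemma to $\mathcal{K}(\alpha)$, obtain the finiteness condition from the fact that every reduction strictly decreases the number of components, and obtain the confluence condition from Lemma~\ref{lem:confluence}. Your additional remarks (well-definedness of reductions via Lemma~\ref{lem:reduction-wd}, the $i=j$ case, and the explicit check that each of the four alternatives in Lemma~\ref{lem:confluence} is an instance of the Diamond Lemma's confluence hypothesis) are all accurate and simply make explicit what the paper leaves implicit.
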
 
\begin{proof}
We prove this using Lemma~\ref{lem:diamond}.
We just need to show that the Confluence condition and the Finiteness
 condition hold for $\mathcal{K}(\alpha)$.
\par
The fact that the Confluence condition holds is shown by
 Lemma~\ref{lem:confluence}. 
On the other hand, a reducing chain starting from $c$, an element of
 $\mathcal{K}(\alpha)$, can have no more than $\nc(c)+1$ elements, because a
 reduction move reduces the number of components by at least $1$.
Thus the Finiteness condition of Lemma~\ref{lem:diamond} also holds.
\end{proof}
We define a map $R$ from $\mathcal{K}(\alpha)$ to itself as follows.
For an element $c$ in $\mathcal{K}(\alpha)$, we define $R(c)$ to
be the reduced element in the equivalence class of $c$ under $\sim_P$.
By Proposition~\ref{prop:reduction-wd}, the map $R$ is well-defined.
\par
Assuming that we can tell whether any element of $\mathcal{K}(\alpha)$ is
reducible or not (which in general is a difficult problem), it is easy
to calculate $R(c)$ for any $c$ using an inductive process.
If $c$ is irreducible, $R(c)$ is $c$.
If not, $c$ can be reduced to some other element of $\mathcal{K}(\alpha)$, say
$c^\prime$ and $R(c)$ is equal to $R(c^\prime)$.
As was shown in the proof of Proposition~\ref{prop:reduction-wd}, the
Finiteness condition holds, so this process will terminate in a finite
number of steps.
\par
Let $w$ be a nanoword in $\mathcal{N}(\alpha)$.
Then $\psi(w)$ is an element in $\mathcal{P}_R(\alpha)$.
Let $c$ be the element of $\mathcal{K}(\alpha)$ which contains
$\psi(w)$.
Then we define $R_K(w)$ to be $R(c)$.
\begin{thm}\label{thm:invariance}
For a nanoword $w$ in $\mathcal{N}(\alpha)$, $R_K(w)$ is a homotopy
 invariant of $w$.
\end{thm}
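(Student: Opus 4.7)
The plan is to chain together the invariance properties already established for $\psi$ and the well-definedness of $R$, so that the argument reduces to a short diagram chase. First, given nanowords $w$ and $w'$ with $w \sim w'$, I would apply Lemma~\ref{lem:psiinvariance} to conclude that $\psi(w) \sim_P \psi(w')$ as elements of $\mathcal{P}_R(\alpha)$.

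Next, let $c$ and $c'$ denote the $\sim_K$-equivalence classes of $\psi(w)$ and $\psi(w')$, respectively; these are elements of $\mathcal{K}(\alpha)$. Since $\sim_P$ on $\mathcal{P}_R(\alpha)$ is generated by $\sim_K$ together with reductions and augmentations, and $\sim_P$ on $\mathcal{K}(\alpha)$ is defined to be the equivalence relation induced by $\sim_P$ on $\mathcal{P}_R(\alpha)$, the relation $\psi(w) \sim_P \psi(w')$ immediately descends to $c \sim_P c'$ in $\mathcal{K}(\alpha)$.

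Finally, by Proposition~\ref{prop:reduction-wd}, each $\sim_P$-equivalence class in $\mathcal{K}(\alpha)$ contains exactly one reduced element, so the map $R$ is constant on $\sim_P$-equivalence classes. Applied to $c$ and $c'$, this gives $R(c) = R(c')$, which by definition of $R_K$ is precisely $R_K(w) = R_K(w')$.

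There is no substantive obstacle here: the genuine work has already been carried out in establishing Lemma~\ref{lem:psiinvariance} (so that homotopy on $\mathcal{N}(\alpha)$ lifts to $\sim_P$ on $\mathcal{P}_R(\alpha)$) and Proposition~\ref{prop:reduction-wd} (which supplies the uniqueness of the reduced representative). The only point to be careful about is bookkeeping the two parallel uses of the symbol $\sim_P$ on $\mathcal{P}_R(\alpha)$ and on $\mathcal{K}(\alpha)$, and confirming that the passage from one to the other is automatic from the definition of the induced relation.
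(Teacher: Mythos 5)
Your proof is correct and follows essentially the same approach as the paper. The paper's proof simply cites Theorem~\ref{thm:bijection} together with Proposition~\ref{prop:reduction-wd}; you have replaced the citation of Theorem~\ref{thm:bijection} with the more granular Lemma~\ref{lem:psiinvariance}, which is in fact the ingredient of that theorem actually needed here, so the underlying argument is the same.
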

\begin{proof}
This follows from Theorem~\ref{thm:bijection} and
 Proposition~\ref{prop:reduction-wd}.
\end{proof}
In particular, this means that $\nc(R_K(w))$ and the $\theta$
associated with $R_K(w)$ are invariants of $w$.
We write $c_R(w)$ for $\nc(R_K(w))$ and write $\theta_R(w)$ for the
$\theta$ associated with $R_K(w)$.
We also define $P_{R,i}(w)$ to be $s_i((p,\theta))$ where $(p,\theta)$
is an element in $R_K(w)$.
The nanophrase $s_i((p,\theta))$ is an nanophrase in
$\mathcal{P}(\alpha_i)$ and is, modulo $\sim_i$, an invariant of $w$.
\par
We have the following corollary of Theorem~\ref{thm:invariance} which
show that $\theta_R(w)$ and the sequence of nanophrases
$P_{R,i}(w)$ together give a complete invariant for $w$.
\begin{cor}\label{cor:completeinvariant}
Let $w$ and $w^\prime$ be nanowords over $\alpha$.
Then $w \sim w^\prime$ if and only if $\theta_R(w)$ and
 $\theta_R(w^\prime)$ are equal and, for all $i$,
$P_{R,i}(w) \sim_i P_{R,i}(w^\prime)$
\end{cor}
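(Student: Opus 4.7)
The plan is to chain together Theorem~\ref{thm:bijection}, Proposition~\ref{prop:reduction-wd} and Proposition~\ref{prop:factorequivalence} to reduce $w \sim w^\prime$ to an equality of two $\sim_K$-classes, and then to translate that equality into the $\theta_R$-plus-factorwise-$s_i$ data.

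First, for the forward direction, I would argue: if $w \sim w^\prime$ then by Lemma~\ref{lem:psiinvariance} we have $\psi(w) \sim_P \psi(w^\prime)$, so both elements lie in the same $\sim_P$-class of $\mathcal{K}(\alpha)$. By Proposition~\ref{prop:reduction-wd} there is a unique reduced element in that class, which is $R_K(w) = R_K(w^\prime)$. Now $\sim_K$-equivalence does not alter the number of components or the labeling $\theta$, so picking representatives $(p,\theta_R(w))$ and $(p^\prime,\theta_R(w^\prime))$ in $R_K(w)$ forces $\theta_R(w) = \theta_R(w^\prime)$. Applying Proposition~\ref{prop:factorequivalence} to these two representatives yields $s_i((p,\theta_R(w))) \sim_i s_i((p^\prime,\theta_R(w^\prime)))$ for all $i$, which is exactly $P_{R,i}(w) \sim_i P_{R,i}(w^\prime)$.

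For the reverse direction, suppose $\theta_R(w) = \theta_R(w^\prime)$ (call this common map $\theta$) and $P_{R,i}(w) \sim_i P_{R,i}(w^\prime)$ for all $i$. Choose $(p,\theta) \in R_K(w)$ and $(p^\prime,\theta) \in R_K(w^\prime)$; by construction $s_i((p,\theta)) = P_{R,i}(w)$ and $s_i((p^\prime,\theta)) = P_{R,i}(w^\prime)$. The hypothesis together with Proposition~\ref{prop:factorequivalence} yields $(p,\theta) \sim_K (p^\prime,\theta)$, hence $R_K(w) = R_K(w^\prime)$. Reversing the forward chain, this means $\psi(w) \sim_P \psi(w^\prime)$, and so Theorem~\ref{thm:bijection} (applied via $\Omega$, using $\Omega(\psi(w)) = w$) gives $w \sim w^\prime$.

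There is no real obstacle; everything is assembled from already-proved results. The only point requiring a little care is the observation that $\sim_K$ preserves $\theta$, so that equality of $\sim_K$-classes of reduced elements implies equality of the associated $\theta_R$. This is immediate from the definition of $\sim_K$ (homotopy moves and isomorphisms of elements of $\mathcal{P}_R(\alpha)$ do not change the second coordinate), and it is what justifies reading off $\theta_R(w)$ unambiguously from the class $R_K(w)$.
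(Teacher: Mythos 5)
Your proof is correct and essentially coincides with the paper's. The paper's own proof is a one-line citation of Theorem~\ref{thm:invariance} and Proposition~\ref{prop:factorequivalence}; you have simply unpacked Theorem~\ref{thm:invariance} into its constituent ingredients (Lemma~\ref{lem:psiinvariance}, Proposition~\ref{prop:reduction-wd}, Theorem~\ref{thm:bijection}) and then applied Proposition~\ref{prop:factorequivalence} exactly as intended, including the small but necessary observation that $\sim_K$ preserves $\theta$.
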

\begin{proof}
This follows from Theorem~\ref{thm:invariance} and
 Proposition~\ref{prop:factorequivalence}.
\end{proof} 
\par
We say that $w$ is \emph{c-minimal} if $\nc(\psi(w))$ is equal to
$c_R(w)$.
The following proposition shows how the property of being minimal and
the property of being c-minimal are related.
\begin{prop}\label{prop:reducedisminimal}
Let $w$ be a nanoword.
If $w$ is minimal, then it is c-minimal.
\end{prop}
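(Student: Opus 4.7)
The plan is to show that when $w$ is minimal, the $\sim_K$-equivalence class $c$ of $\psi(w)$ is already irreducible, from which $R_K(w)=c$ and hence $c_R(w)=\nc(\psi(w))$ follow immediately.

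Write $\psi(w)=(p,\theta)$. By construction of $\psi$, every component of $p$ is non-empty, and passing from $w$ to $p$ does not add or remove any letter, so $\rank(p)=\rank(w)$. The first substantive step is to upgrade minimality of $w$ to minimality of $p$, that is, to verify $\hr(p)=\rank(p)$. I would choose any nanophrase $q\sim p$ realizing $\rank(q)=\hr(p)$, apply Lemma~\ref{lem:concatenation} to get $\chi(q)\sim\chi(p)=w$, and deduce $\hr(w)\leq\rank(\chi(q))=\rank(q)=\hr(p)$. Combined with the hypothesis $\rank(w)=\hr(w)$ and the trivial bound $\hr(p)\leq\rank(p)$, this forces $\hr(p)=\rank(p)$.

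Once $p$ is known to be minimal and to have no empty component, the conclusion comes by contradiction. Suppose $c$ is $i$-reducible for some $i$; then $c$ contains a pair $(p'',\theta)$ whose $i$-th component is $\trivial$. Since $\sim_K$ refines $\sim$, we have $p\sim p''$, and Lemma~\ref{lem:hr-emptycomponent} applied to the minimal $p$ and to $p''$ forces the $i$-th component of $p$ itself to be $\trivial$, contradicting the non-emptiness of every component of $p$. Hence $c$ is irreducible, so by Proposition~\ref{prop:reduction-wd} we have $R(c)=c$, and therefore $c_R(w)=\nc(c)=\nc(\psi(w))$, which is precisely the statement that $w$ is c-minimal.

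The heart of the argument, and the only step that requires any real care, is the transfer of minimality from $w$ to its decomposition $p$ via Lemma~\ref{lem:concatenation}; beyond that, everything is a direct application of Lemma~\ref{lem:hr-emptycomponent} and the construction of $\psi$. I do not anticipate any serious obstacle.
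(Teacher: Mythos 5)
Your proof is correct, but it takes a somewhat different route from the one in the paper. The paper argues by direct contradiction: assuming $\psi(w)=(p,\theta)$ is $i$-reducible, it uses Lemma~\ref{lem:simplify} to get $p\sim f_O(p)$ with $O=\{i\}$, applies an $i$-reduction to $(f_O(p),\theta)$ to produce $(q,\theta')$ of strictly smaller rank, and then pushes this back to $\mathcal{N}(\alpha)$ via $\Omega$ and Lemma~\ref{lem:hinvariance} to exhibit a nanoword homotopic to $w$ of rank less than $\rank(w)$, contradicting minimality. You instead factor the argument into two conceptual steps. First you transfer minimality from $w$ to $p$ using Lemma~\ref{lem:concatenation}: any $q\sim p$ realizing $\hr(p)$ concatenates to a nanoword $\chi(q)\sim w$ with $\rank(\chi(q))=\rank(q)$, which forces $\hr(p)\geq\hr(w)=\rank(w)=\rank(p)$ and hence equality. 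Second, you apply the already-established Lemma~\ref{lem:hr-emptycomponent} (which the paper proves but does not invoke here) to conclude that $p$, being minimal with no empty component, cannot be $\sim$-equivalent, and a fortiori not $\sim_K$-equivalent, to any nanophrase with an empty $i$th component; thus the class $c$ of $\psi(w)$ is already irreducible and $R_K(w)=c$. Your version is more modular: it avoids the detour through $f_O$, explicit reductions, and $\Omega$, at the price of needing the extra step that $p$ inherits minimality from $w$. Both ultimately route through Lemma~\ref{lem:simplify}, yours transitively via Lemma~\ref{lem:hr-emptycomponent}.
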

\begin{proof}
We write $(p,\theta)$ for $\psi(w)$
and assume that $w$ is minimal but not c-minimal for a contradiction.
The assumption that $w$ is not c-minimal is equivalent to assuming that
$\psi(w)$ is not in $R_K(w)$.
This means that $\psi(w)$ is $i$-reducible for some $i$.
Thus $(p,\theta) \sim_K (p^\prime,\theta)$ for some nanophrase
 $p^\prime$ for which the $i$th component is empty.
\par 
Let $O$ equal $\{i\}$.
Then by Lemma~\ref{lem:simplify}, $p \sim f_O(p)$
 and so $(p,\theta) \sim_K (f_O(p),\theta)$.
Note that because $(p,\theta)$ is in the image of $\psi$, the $i$th
 component of $p$ is not empty.
Thus $\rank(p)$ is greater than $\rank(f_O(p))$.
\par
Let $(q,\theta^\prime)$ be the result of applying an $i$-reduction to
 $(f_O(p),\theta)$.
Then $\rank(q)$ is equal to $\rank(f_O(p))$ and so less than
 $\rank(p)$. 
As $(q,\theta^\prime) \sim_P (p,\theta)$, by
 Lemma~\ref{lem:hinvariance}, 
$\Omega((q,\theta^\prime)) \sim \Omega((p,\theta))$.
Now $\Omega((p,\theta))$ is $w$, so $\Omega((q,\theta^\prime)) \sim w$.
However, $\rank(\Omega((q,\theta^\prime)))$ is equal to $\rank(q)$
 which is less than $\rank(p)$, so $\hr(w)$ is less than $\rank(w)$
 contradicting the fact that $w$ is minimal.
\end{proof}
\begin{rem}
Note that if a nanoword $w$ is c-minimal it does not necessarily mean
 that it is minimal.
For example, suppose $w$ is a non-trivial minimal nanoword.
Then $w$ has the form $XyXz$ for some letter $X$ and some words $y$ and
 $z$.
Then the nanoword $AAXyXz$ where $\xType{A}$ is equal to $\xType{X}$ is
 clearly c-minimal but not minimal. 
\end{rem}
\begin{thm}\label{thm:hr}
Let $(\alpha,\tau,S)$ be a composite homotopy data
 triple and let its prime factors be denoted by
 $(\alpha_i,\tau_i,S_i)$.
Let $w$ be a nanoword over $\alpha$.
Then
\begin{equation}\label{eqn:compositehr}
\hr(w) = \sum_i \hr(P_{R,i}(w)).
\end{equation}
\end{thm}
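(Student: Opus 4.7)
The plan is to establish the two inequalities $\hr(w) \leq \sum_i \hr(P_{R,i}(w))$ and $\hr(w) \geq \sum_i \hr(P_{R,i}(w))$ separately. The key observation throughout is that for any element $(p,\theta)$ of $\mathcal{P}_R(\alpha)$, each letter of $p$ belongs to a unique component with a unique label under $\theta$, so the letters partition according to $\alpha_i$-type and hence
\begin{equation*}
\rank(p) = \sum_i \rank(s_i((p,\theta))).
\end{equation*}

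For the upper bound, I would fix a representative $(p,\theta)$ of the invariant $R_K(w)$. For each $i$, choose a minimal nanophrase $q_i$ in $\mathcal{P}(\alpha_i)$ with $q_i \sim_i s_i((p,\theta)) = P_{R,i}(w)$, so that $\rank(q_i) = \hr(P_{R,i}(w))$; since homotopy preserves component count, $\nc(q_i) = \nc(s_i((p,\theta)))$. The components of $q_i$ then correspond bijectively to the components of $p$ carrying label $i$ under $\theta$, so I can assemble a new nanophrase $p'$ over $\alpha$ by substituting, for each $i$, the $i$-labelled components of $p$ by the corresponding components of $q_i$. The pair $(p',\theta)$ lies in $\mathcal{P}_R(\alpha)$, and $s_i((p',\theta)) = q_i$ for each $i$. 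By Proposition~\ref{prop:factorequivalence}, $(p',\theta) \sim_K (p,\theta)$, hence $p' \sim p$, so by Lemma~\ref{lem:concatenation} we get $\chi(p') \sim \chi(p)$. Since $\chi(p) = \Omega((p,\theta)) \sim \Omega(\psi(w)) = w$ by Lemma~\ref{lem:hinvariance}, the nanoword $\chi(p')$ is homotopic to $w$, and its rank equals $\rank(p') = \sum_i \rank(q_i) = \sum_i \hr(P_{R,i}(w))$.

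For the lower bound, pick a minimal nanoword $w'$ with $w' \sim w$, so $\rank(w') = \hr(w)$. By Proposition~\ref{prop:reducedisminimal}, $w'$ is c-minimal, meaning $\nc(\psi(w')) = c_R(w') = \nc(R_K(w'))$; by Theorem~\ref{thm:invariance}, $R_K(w') = R_K(w)$. Since every reduction strictly decreases $\nc$, the fact that the $\sim_K$-class of $\psi(w')$ and $R_K(w)$ share the same $\sim_P$-class together with equal $\nc$ forces $\psi(w')$ to already be irreducible, so its $\sim_K$-class is $R_K(w)$ itself. Writing $\psi(w') = (q,\theta_q)$, each $s_i((q,\theta_q))$ represents $P_{R,i}(w)$, so $\rank(s_i((q,\theta_q))) \geq \hr(P_{R,i}(w))$. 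Summing and using the partitioning identity gives $\hr(w) = \rank(w') = \rank(q) = \sum_i \rank(s_i((q,\theta_q))) \geq \sum_i \hr(P_{R,i}(w))$.

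The main obstacle is the subtle step in the lower bound deducing $\psi(w') \in R_K(w)$ from c-minimality: this requires combining Proposition~\ref{prop:reduction-wd} (uniqueness of the reduced element in each $\sim_P$-class) with strict monotonicity of $\nc$ under reductions. The upper bound is more mechanical but relies crucially on Proposition~\ref{prop:factorequivalence} to glue per-factor minimizations into a single nanophrase homotopic to $w$, which is precisely the content that the theory of $\mathcal{P}_R(\alpha)$ was built to deliver.
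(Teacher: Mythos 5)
Your proof is correct and follows essentially the same two-inequality strategy as the paper: minimize each $P_{R,i}(w)$ and reassemble via Proposition~\ref{prop:factorequivalence} for the upper bound, and pass a minimal representative $w'$ through Proposition~\ref{prop:reducedisminimal} (making explicit the monotonicity-of-$\nc$ argument the paper leaves implicit) plus the partitioning identity $\rank(q) = \sum_i \rank(s_i((q,\theta_q)))$ for the lower bound.
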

\begin{proof}
Let $u$ be a minimal nanoword homotopic to $w$.
Then $\rank(u)$ is equal to $\hr(w)$. 
By Proposition~\ref{prop:reducedisminimal} $\psi(u)$ is in $R_K(w)$.
Then $\hr(P_{R,i}(u))$ is equal to $\hr(P_{R,i}(w))$ and clearly
\begin{equation}\label{eqn:compositehrbelow}
\hr(u) \geq \sum_i \hr(P_{R,i}(w)).
\end{equation}
\par
On the other hand, for each $i$, let $q_i$ be a minimal nanophrase such
 that $q_i \sim_i P_{R,i}(w)$.
Then let $(q, \theta_R(w))$ be the element of $\mathcal{P}_R(\alpha)$
 such that $s_i((q, \theta_R(w)))$ is equal to $q_i$ for all $i$.
Then $(q, \theta_R(w))$ is in $R_K(w)$.
Then by Lemma~\ref{lem:hinvariance}, $\Omega((q, \theta_R(w))) \sim u$ and so
\begin{equation}\label{eqn:compositehrabove}
\hr(u) \leq \sum_i \hr(P_{R,i}(w)).
\end{equation}
Combining the inequalities \eqref{eqn:compositehrbelow} and
 \eqref{eqn:compositehrabove} gives \eqref{eqn:compositehr}.
\end{proof}
\begin{rem}
If $(\alpha,\tau,S)$ is a prime homotopy data triple then the results of
 this section hold, but are trivial.
The set $\mathcal{P}_R(\alpha)$ consists of the element
$(\zerocomp,\theta_\emptyset)$, where $\theta_\emptyset$ is the empty
 map, and elements $(p,\theta: 1 \mapsto 1)$ where $p$ is a
 $1$-component nanophrase.
Thus $\theta_R(w)$ can take one of two values depending on whether $w$
 is homotopic to $\trivial$ or not.
However, to calculate the value of $\theta_R(w)$ for a given nanoword
 $w$ we must show whether $w$ is homotopic to $\trivial$ or not and so
 the invariant is trivial.
Similarly, if $w$ is not homotopic to $\trivial$, $P_{R,1}(\psi(w))$ is
 given by $w$, and again the invariant is trivial.
In this case, Theorem~\ref{thm:hr} is reduced to the tautological
 statement that $\hr(w)$ is equal to itself.
\end{rem}
We finish this section by briefly considering some symmetries of
nanowords defined by Turaev in \cite{Turaev:Words}.
We start by recalling some definitions.
Given a nanoword $w$, the \emph{opposite nanoword} of $w$ is the
nanoword given by taking the letters of $w$ in the reverse order and
preserving the projection of the letters.
We write $w^-$ for the opposite nanoword of $w$.
The \emph{inverse nanoword} of $w$, written $\overline{w}$, has the same
Gauss word as $w$ but the projections of the letters are composed with
$\tau$.
In other words, for all $A$ appearing in $w$,
$\xType{A}_{\overline{w}}$, the projection of $A$ in 
$\overline{w}$, is equal to $\tau(\xType{A}_w)$, where $\xType{A}_w$ is
the projection of $A$ in $w$.
\par
We extend these definitions to nanophrases.
Given a nanophrase $p$, the \emph{opposite nanophrase} of $p$, written
$p^-$ is the nanophrase given by taking the components in reverse order
and, for each component, reversing the order of the letters in the
component.
The \emph{inverse nanophrase} of $p$, written $\overline{p}$, has the
same Gauss phrase as $p$ but the projections of the letters are composed
with $\tau$.
These operations are involutions, so the opposite nanophrase of $p^-$ is
$p$ and the inverse nanophrase of $\overline{p}$ is $p$.
The two operations commute.
Note that for a $1$-component nanophrase, which we can consider to be a
nanoword, the definitions given for nanowords and nanophrases coincide.
\begin{ex}
Let $\alpha$ be the set $\{a,b\}$ and let $\tau$ be the map swapping $a$
 with $b$.
Let $p$ be the nanophrase $ABC|AC|B$ where $\xType{A}$ and $\xType{C}$
 are $a$ and $\xType{B}$ is $b$.
Then $p^-$ is $B|CA|CBA$ where the letters have the same projections as
 $p$.
The nanophrase $\overline{p}$ is $ABC|AC|B$ where $\xType{A}$ and
 $\xType{C}$ are $b$ and $\xType{B}$ is $a$.
\end{ex}
In \cite{Turaev:Words}, Turaev defined the following terms.
Given a homotopy, a nanoword $w$ is \emph{homotopically symmetric} (with
respect to that homotopy) if $w$ is homotopic to $w^-$.
Given a homotopy, a nanoword $w$ is \emph{homotopically skew-symmetric}
(with respect to that homotopy) if $w$ is homotopic to
$(\overline{w})^-$.
\par
We fix a homotopy data triple $(\alpha,\tau,S)$.
Let $\kappa_\tau$ be the involution $\tau \times \tau \times \tau$ on
$\alpha \times \alpha \times \alpha$,
and let $\kappa_i$ be the involution which maps
$(a,b,c)$ to $(c,b,a)$ on $\alpha \times \alpha \times \alpha$.
\par
Suppose $S$ is invariant under $\kappa_\tau$.
Then Turaev observed in \cite{Turaev:Words} that if $u$ and $v$ are 
homotopic under the homotopy given by $(\alpha,\tau,S)$, then
$\overline{u}$ and $\overline{v}$ are also homotopic under the same
homotopy.
Then it is easy to see that, for any nanoword $w$, $c_R(\overline{w})$
is equal to $c_R(w)$, $\theta_R(\overline{w})$ is equal to $\theta_R(w)$
and, for all $i$, 
\begin{equation*}
P_{R,i}(\overline{w}) = \overline{P_{R,i}(w)}. 
\end{equation*}
\par
Now suppose that $S$ is invariant under $\kappa_i$.
Then Turaev observed in \cite{Turaev:Words} that if $u$ and $v$ are 
homotopic under the homotopy given by $(\alpha,\tau,S)$, then
$u^-$ and $v^-$ are also homotopic under the same
homotopy.
Again, it is easy to see that, for any nanoword $w$, $c_R(w^-)$
is equal to $c_R(w)$, $\theta_R(w^-)(i)$ is equal to
$\theta_R(w)(c_R(w) + 1 - i)$ for all $i$ 
and 
\begin{equation*}
P_{R,i}(w^-) = (P_{R,i}(w))^-. 
\end{equation*}
for all $i$.
\par
So if $S$ is invariant under $\kappa_i$, $w$ can only be homotopically
symmetric if
\begin{equation}\label{eqn:symm}
(\theta_R(w))(i) = \theta_R(w)(c_R(w) + 1 - i)
\end{equation}
for all $i$.
As $\theta_R(w)$ is locally variable, this implies that $c_R(w)$ must be
odd.
Thus if $c_R(w)$ is even or \eqref{eqn:symm} does not hold for all $i$,
$w$ cannot be homotopically symmetric.
Similarly, if $S$ is invariant under $\kappa_i$ and $\kappa_\tau$, $w$
cannot be homotopically skew-symmetric if $c_R(w)$ is even or
\eqref{eqn:symm} does not hold for all $i$.
%%%%%%%%%%%%%%%%%%%%%%%%%%%%%%%%%%%%%%%%%%%%%%%%
\section{Decidability}\label{sec:decidable-homotopies}
We say that a homotopy is \emph{reduction decidable} if we have a finite
time algorithm which, for any nanophrase $p$ and any integer $i$,
determines whether $p$ is $i$-reducible or not.
We say that a homotopy is \emph{equality decidable} if we have a finite
time algorithm which, for any two nanophrases $p$ and $q$, determines
whether $p$ is equivalent to $q$ or not.
\begin{prop}\label{prop:nanowordreduction}
Let $(\alpha,\tau,S)$ be a composite homotopy data
 triple and let its prime factors be denoted by
 $(\alpha_i,\tau_i,S_i)$.
Suppose, for each $i$, the homotopy given by
 $(\alpha_i,\tau_i,S_i)$ is reduction decidable.
Then there is a finite time algorithm which, for any nanoword over
 $\alpha$, $w$, produces a nanoword $R(w)$ such that $\psi(R(w))$ is in
 $R_K(w)$.
\end{prop}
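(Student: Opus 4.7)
The plan is to construct $R(w)$ by iteratively reducing $\psi(w)$ until it reaches an irreducible element of $\mathcal{P}_R(\alpha)$, and then returning the concatenation of its components. Write $(p_0,\theta_0) = \psi(w)$. At each stage we maintain an element $(p_t,\theta_t) \in \mathcal{P}_R(\alpha)$ with $(p_t,\theta_t) \sim_P \psi(w)$; the algorithm tests whether it is reducible, and if so produces $(p_{t+1},\theta_{t+1})$ by a single reduction. Once an irreducible $(p_T,\theta_T)$ is reached, it represents $R_K(w)$ by Proposition~\ref{prop:reduction-wd}, and the output is $R(w) = \chi(p_T)$.

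The core subroutine is the test for $i$-reducibility at each stage. Given $(p,\theta)$ and an index $i \in \widehat{\nc(p)}$, let $j = \theta(i)$ and let $i'$ be the index in $s_j((p,\theta))$ that corresponds to $c_i(p)$. By Proposition~\ref{prop:factorequivalence}, $(p,\theta)$ is $i$-reducible if and only if $s_j((p,\theta))$ is $i'$-reducible under $\sim_j$. Since $\sim_j$ is reduction decidable by hypothesis and $s_j((p,\theta))$ is a nanophrase over $\alpha_j$, this test takes finite time. Running it for each $i$ decides in finite time whether $(p,\theta)$ is reducible, and produces a reducible index when one exists.

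Once such an $i$ is found, the subtlety is that the oracle guaranteed by reduction decidability does not return an explicit nanophrase witnessing $i$-reducibility. The trick is to apply Lemma~\ref{lem:simplify} to the pair formed by $p$ and any hypothetical witness $p^\prime$ (whose existence the oracle guarantees) to conclude $p \sim f_{\{i\}}(p)$; combined with Lemma~\ref{lem:restrictionembedding} this gives $(p,\theta) \sim_K (f_{\{i\}}(p),\theta)$, and $f_{\{i\}}(p)$ is an explicitly computable nanophrase whose $i$th component is empty. Applying the unique reduction of the $i$th component of $(f_{\{i\}}(p),\theta)$ produces $(p_{t+1},\theta_{t+1})$, which satisfies $(p_{t+1},\theta_{t+1}) \sim_P (p_t,\theta_t)$ and has strictly fewer components than $(p_t,\theta_t)$.

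Termination is immediate: each iteration decreases the component count, so the procedure halts after at most $\nc(\psi(w))$ steps. For correctness, $(p_T,\theta_T)$ is irreducible, so by Proposition~\ref{prop:reduction-wd} it represents the $\sim_P$-reduced class of $\psi(w)$, which is $R_K(w)$. Since any irreducible element of $\mathcal{P}_R(\alpha)$ has no empty components (an empty $j$th component would witness $j$-reducibility via the identity), $(p_T,\theta_T)$ lies in the image of $\psi$, so $\psi(\chi(p_T)) = (p_T,\theta_T)$ and hence $\psi(R(w)) \in R_K(w)$ as required. The main conceptual obstacle is the gap between \emph{decidability of existence} of a reducing equivalent and \emph{constructibility} of such an equivalent; Lemma~\ref{lem:simplify} is precisely the tool that closes this gap, converting each positive oracle answer into the explicit simplified nanophrase $f_{\{i\}}(p)$ on which the reduction move is applied.
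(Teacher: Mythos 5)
Your proposal is correct and follows essentially the same approach as the paper's proof: test reducibility via the prime factors, use Lemma~\ref{lem:simplify} (with Lemma~\ref{lem:restrictionembedding}) to convert the oracle's yes-answer into the explicit witness $f_{\{i\}}(p)$, apply a reduction, and recurse with strictly decreasing component count. The only cosmetic difference is that you iterate directly on elements of $\mathcal{P}_R(\alpha)$ applying one reduction at a time, whereas the paper recurses on nanowords via $R(\chi(p'))$ (so that one recursive step may absorb several reductions through $\psi\circ\chi$); the logic and the use of the key lemmas are the same.
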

\begin{proof}
We give a finite time algorithm to produce $R(w)$ from an arbitrary
 nanoword $w$.
\par
We first note that $\psi(w)$ is reducible if and only if
 $\psi_i(w)$ is reducible for some $i$.
As, for each $i$, the homotopy given by
 $(\alpha_i,\tau_i,S_i)$ is reduction decidable,
 we can determine whether or not $\psi(w)$ is reducible in finite time.
\par
If $\psi(w)$ is not reducible then $\psi(w)$ is in $R_K(w)$ and we
 define $R(w)$ to be $w$.
\par
If $\psi(w)$ is reducible, then it is $j$-reducible for some $j$.
Let $O$ be the subset of $\widehat{\nc(p)}$ consisting of the single
 element $j$.
Writing $(p,\theta)$ for $\psi(w)$, let $p^\prime$ be $f_O(\psi(w))$.
Then by Lemma~\ref{lem:simplify} and
 Lemma~\ref{lem:restrictionembedding}, 
$(p,\theta) \sim_K (p^\prime,\theta)$.
Let $c$ be the equivalence class under $\sim_K$ which contains $\psi(w)$
 and let $c^\prime$ be the equivalence class under $\sim_K$ which  
 contains $\psi(\chi(p^\prime))$.
It is clear that $c^\prime$ is derivable from $c$ by a series of one or
 more reductions.
We define $R(w)$ to be $R(\chi(p^\prime))$.
This definition is recursive, but the recursion will terminate because
 $\nc(\chi(p^\prime))$ is less than $\nc(w)$.
\end{proof}
We note that $R(w)$ is not necessarily well-defined as it depends on the
order that the reductions are made.
For example, consider the nanoword $ACADDBBC$ where $\xType{A}$ and
$\xType{B}$ are in $\alpha_1$ and $\xType{C}$ and $\xType{D}$ are in
$\alpha_2$.
Then $\psi(w)$ is $(A|C|A|DD|BB|C, \theta)$ where $\theta$ maps odd
numbers to $1$ and even numbers to $2$.
The $4$th and $5$th components of $\psi(w)$ are clearly reducible by an
H1 move.
If we apply the above process to the $4$th component we get
$(A|C|ABB|C, \theta^\prime)$ where $\theta^\prime$ maps odd numbers to
$1$ and even numbers to $2$.
If we apply the above process to the $5$th component we get
$(A|C|A|DDC, \theta^\prime)$.
Using the linking matrix described in Section~\ref{subsec:linkingmatrix}
we can show that both these elements are reduced.
Thus $R(w)$ can be $(A|C|ABB|C, \theta^\prime)$ or 
$(A|C|A|DDC,\theta^\prime)$ 
depending on the order we make the reductions.
\begin{thm}\label{thm:decidability}
Let $(\alpha,\tau,S)$ be a composite homotopy data
 triple and let its prime factors be denoted by
 $(\alpha_i,\tau_i,S_i)$.
Suppose, for each $i$, the homotopy given by
 $(\alpha_i,\tau_i,S_i)$ is reduction decidable and
 equality decidable.
Then, there is a finite time algorithm which for any two nanowords over
 $\alpha$, $w$ and $w^\prime$, determines whether or not they are
 equivalent under the homotopy given by
 $(\alpha,\tau,S)$.
\end{thm}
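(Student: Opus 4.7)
The plan is to assemble the algorithmic machinery already developed into a procedure that, given $w$ and $w^\prime$, decides whether $w \sim w^\prime$ by reducing the question to finitely many equivalence checks at the prime factor level.

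First, I would invoke Proposition~\ref{prop:nanowordreduction}, whose hypotheses are satisfied (each prime factor is reduction decidable), to compute nanowords $R(w)$ and $R(w^\prime)$ in finite time such that $\psi(R(w))$ lies in $R_K(w)$ and $\psi(R(w^\prime))$ lies in $R_K(w^\prime)$. Write $\psi(R(w)) = (p,\theta)$ and $\psi(R(w^\prime)) = (p^\prime,\theta^\prime)$. By Corollary~\ref{cor:completeinvariant}, $w \sim w^\prime$ if and only if $\theta_R(w) = \theta_R(w^\prime)$ and $P_{R,i}(w) \sim_i P_{R,i}(w^\prime)$ for every $i$; since $\theta_R(w)$ can be read off as $\theta$ (similarly for $w^\prime$) and $P_{R,i}(w) = s_i((p,\theta))$ (similarly for $w^\prime$), this turns our problem into finitely many checks.

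The algorithm then proceeds in two stages. Stage one: compare $\theta$ and $\theta^\prime$ as maps from $\widehat{\nc(p)}$ and $\widehat{\nc(p^\prime)}$ to $\hat{k}$; if $\nc(p) \neq \nc(p^\prime)$ or $\theta \neq \theta^\prime$, output that $w$ and $w^\prime$ are not homotopic. This is a finite combinatorial check. Stage two: for each $i$ from $1$ to $k$, extract the subnanophrases $s_i((p,\theta))$ and $s_i((p^\prime,\theta^\prime))$ over $\alpha_i$, and use the equality decidability of the homotopy given by $(\alpha_i,\tau_i,S_i)$ to decide whether these are equivalent under $\sim_i$. If equivalence holds for every $i$, output that $w \sim w^\prime$; otherwise output that they are not homotopic. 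Correctness of stage two is a direct application of Corollary~\ref{cor:completeinvariant}, which in turn rests on Proposition~\ref{prop:factorequivalence} identifying $\sim_K$ on $R_K$-classes with factorwise $\sim_i$-equivalence.

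Termination is immediate: Proposition~\ref{prop:nanowordreduction} gives $R(w)$ and $R(w^\prime)$ in finite time, stage one is a finite comparison, and stage two consists of $k$ invocations of the assumed finite time decision procedures for the $(\alpha_i,\tau_i,S_i)$-homotopies. The only subtlety worth flagging, rather than a genuine obstacle, is that $R(w)$ is not canonical (the output depends on the order in which reductions are made, as illustrated in the remark following Proposition~\ref{prop:nanowordreduction}); however, by Proposition~\ref{prop:reduction-wd} the class $R_K(w) \in \mathcal{K}(\alpha)/{\sim_K}$ is canonical, and both $\theta_R(w)$ and $P_{R,i}(w)$ (the latter modulo $\sim_i$) depend only on this class, so any choice made by the reduction algorithm yields the same answer under the stage two comparisons.
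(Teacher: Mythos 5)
Your proposal is correct and follows essentially the same route as the paper: compute $R(w)$ and $R(w^\prime)$ via Proposition~\ref{prop:nanowordreduction}, compare the associated $\theta$-maps, and then invoke Corollary~\ref{cor:completeinvariant} together with the equality decidability of each $\sim_i$ to finish. The only addition is your closing remark about the non-canonicality of $R(w)$ being harmless because $R_K(w)$ is well-defined by Proposition~\ref{prop:reduction-wd}; the paper leaves this implicit but it is a worthwhile clarification.
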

\begin{proof}
Let $w$ and $w^\prime$ be two nanowords in $\mathcal{\alpha}$.
By Proposition~\ref{prop:nanowordreduction} we can
 calculate $R(w)$ and $R(w^\prime)$ in finite time.
We write $(p,\theta)$ for $R(w)$ and $(p^\prime,\theta^\prime)$ for
 $R(w^\prime)$.
Now, as $\psi(R(w))$ and $\psi(R(w^\prime))$ are irreducible, if
 $\theta$ and $\theta^\prime$ are not equal, $w$ and $w^\prime$ are not
 equivalent.
\par
If $\theta$ and $\theta^\prime$ are equal, then by
Corollary~\ref{cor:completeinvariant},
$\psi(w) \sim \psi(w^\prime)$ if and only if
$P_{R,i}(w) \sim_i P_{R,i}(w^\prime)$ for all $i$.
Now, for all $i$, $P_{R,i}(w) \sim_i P_{R,i}(w^\prime)$ can be
 determined in finite time because $\sim_i$ is equality decidable.
\end{proof}
\begin{rem}
Recall the definitions of $(\alpha_G,\tau_G,S_G)$ and
 $(\alpha_F,\tau_F,S_F)$ from Example~\ref{ex:diagonalshdt}.
By Theorem~\ref{thm:decidability} and
 Example~\ref{ex:diagonalfactorization} we may conclude the following.
If $(\alpha_G,\tau_G,S_G)$ and $(\alpha_F,\tau_F,S_F)$ are both
 reduction and equality decidable then for any $S$ diagonal homotopy
 there is a finite time algorithm which can determine homotopy
 equivalence of arbitrary pairs of nanowords.
Whether $(\alpha_G,\tau_G,S_G)$ or $(\alpha_F,\tau_F,S_F)$ are
 reduction or equality decidable is an open question.
\end{rem}
We say that a homotopy has a \emph{normal form} if there exists a finite
time algorithm taking a nanophrase $p$ and producing a nanophrase $n(p)$
which satisfies the following conditions:
\begin{enumerate}
\item
$n(p) \sim p$ for all $p$,
\item
if $p \sim p^\prime$, then $n(p)$ equals $n(p^\prime)$ for all $p$ and
     $p^\prime$.
\end{enumerate}
If, for all $p$, $\rank(n(p))$ is equal to $\hr(p)$, then we say that $n$ is
a \emph{minimizing normal form}.
\begin{prop}\label{prop:mnf-implies-decidable}
If a homotopy has a minimizing normal form $n$, it is reduction decidable and
 equality decidable.
\end{prop}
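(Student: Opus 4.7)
The plan is to derive both decidability statements directly from the two defining conditions of a normal form, together with Lemma~\ref{lem:hr-emptycomponent}. The key observation is that because $n$ is minimizing, the equality $\rank(n(p)) = \hr(p) = \hr(n(p))$ holds for every nanophrase $p$, so $n(p)$ is a minimal representative of its homotopy class and therefore fits the hypothesis of Lemma~\ref{lem:hr-emptycomponent}.

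For equality decidability, given nanophrases $p$ and $q$, I would compute $n(p)$ and $n(q)$ (each in finite time, by hypothesis) and then test whether the resulting nanophrases are equal, which is trivially decidable. Condition (2) of the normal form definition gives $n(p) = n(q)$ whenever $p \sim q$; conversely, condition (1) gives $p \sim n(p) = n(q) \sim q$ whenever the normal forms coincide. Thus $p \sim q$ if and only if $n(p) = n(q)$.

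For reduction decidability, given a nanophrase $p$ and an index $i$, I claim that $p$ is $i$-reducible if and only if the $i$th component of $n(p)$ is empty, and I would use this as the decision procedure. The ``if'' direction is immediate, since $n(p) \sim p$ by condition (1) exhibits a representative of the homotopy class of $p$ whose $i$th component is empty. For the ``only if'' direction, suppose $p \sim p^\prime$ where the $i$th component of $p^\prime$ is empty. Then $n(p) \sim p \sim p^\prime$, and since $\rank(n(p)) = \hr(n(p))$, Lemma~\ref{lem:hr-emptycomponent} applied to $n(p)$ and $p^\prime$ forces the $i$th component of $n(p)$ to be empty as well. Checking emptiness of a single component of $n(p)$ is trivial, so the whole procedure terminates in finite time.

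The argument is essentially a direct unpacking of definitions, so there is no real obstacle; the only non-trivial ingredient is Lemma~\ref{lem:hr-emptycomponent}, which is exactly what lets the empty-component pattern be transferred from an arbitrary homotopic representative back to the canonical minimal representative $n(p)$.
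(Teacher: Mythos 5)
Your proof is correct and follows essentially the same route as the paper's: equality decidability comes directly from the existence (and computability) of a normal form, and reduction decidability is established by showing that $p$ is $i$-reducible exactly when the $i$th component of $n(p)$ is empty, with the nontrivial direction supplied by Lemma~\ref{lem:hr-emptycomponent} applied to the minimal representative $n(p)$. No gaps; you have simply spelled out the equality-decidability step slightly more explicitly than the paper does.
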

\begin{proof}
The fact that the homotopy is equality decidable immediately follows
 from the existance of a normal form (minimizing or not).
Now $p$ is $i$-reducible if and only if $n(p)$ is $i$-reducible.
If the $i$th component of $n(p)$ is $\trivial$, then $n(p)$ is
 $i$-reducible by definition.
On the other hand, suppose that the $i$th component of $n(p)$ is not
 $\trivial$.
Then as $n$ is a minimizing normal form, $\rank(n(p))$ is equal to
 $\hr(p)$ and so by Lemma~\ref{lem:hr-emptycomponent}, $n(p)$ is not
 $i$-reducible.
Thus $p$ is $i$-reducible if and only if the $i$th component of $n(p)$
 is trivial, and so the homotopy is reduction decidable.
\end{proof}
Let $S_\trivial$ be the empty set.
Then any homotopy data triple $(\alpha,\tau,S_\trivial)$ gives a
homotopy where the third homotopy move is disallowed.
Thus the homotopy relation is generated by isomorphism and the first two
homotopy moves.
\par
The homotopy relation induces an equivalence relation on the set of
isomorphism classes of $\mathcal{P}(\alpha)$.
We call a first homotopy move or second homotopy move \emph{reducing} if
it removes letters from a nanophrase.
For two nanophrases $p$ and $q$ we write $p \geq q$ if either
$p$ is equal to $q$ or $q$ is derivable from $p$ by a sequence of
reducing homotopy moves.
The following lemma shows that for the homotopy given by
$(\alpha,\tau,S_\trivial)$,
the set of isomorphism classes satisfies the Confluence
condition of Newman's Diamond Lemma (Lemma~\ref{lem:diamond}).
\begin{lem}\label{lem:gauss-confluence}
Let $p$ be a nanophrase in $\mathcal{P}(\alpha)$.
Suppose $p_1$ and $p_2$ are the results of applying different reducing
 moves to $p$ under the homotopy given by
 $(\alpha,\tau,S_\trivial)$.
Then there exists a nanophrase $p^\prime$ such that $p_1 \geq p^\prime$
 and $p_2 \geq p^\prime$.
\end{lem}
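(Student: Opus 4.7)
The plan is to proceed by case analysis on the types of the two reducing moves $p \to p_1$ and $p \to p_2$. Since $S = S_\trivial$, each move is either H1 or H2, giving three cases up to symmetry. In every case I will take $p^\prime$ to be the nanophrase obtained from $p$ by deleting all letters touched by either move, and show that $p^\prime$ is reached from $p_1$ and from $p_2$ by one further reducing move of the other type. Because only one reducing move is needed from each of $p_1$ and $p_2$, this immediately gives $p_1 \geq p^\prime$ and $p_2 \geq p^\prime$.

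The unifying ingredient is the following observation: each reducing move removes letters that form adjacent blocks in $p$, namely the single block $XX$ for H1, and the two blocks $AB$ and $BA$ for H2. Two letters in an adjacent block have nothing between them, so deleting letters of $p$ disjoint from the block cannot break the block apart; the pair remains adjacent, in the same relative order. Consequently, if a reducing move is applicable in $p$ and we delete letters of $p$ disjoint from its pattern, the move remains applicable after the deletion.

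Combined with a small disjointness check in each case this is enough. For $(\mathrm{H1},\mathrm{H1})$ the removed pairs are disjoint because the two moves use different letters. For $(\mathrm{H1},\mathrm{H2})$ with H1 on $X$ and H2 on $A,B$, we have $X \notin \{A,B\}$, since each occurrence of $A$ is adjacent in $p$ to a $B$ rather than to another $A$, and similarly for $B$. For $(\mathrm{H2},\mathrm{H2})$ with letter pairs $\{A,B\}$ and $\{C,D\}$, the two pairs are disjoint because an H2 move is determined by the unordered pair of letters it removes and the two moves are different. In each of the three cases, the adjacency-preservation principle shows that after applying one move the other is still applicable, and performing that remaining move on $p_1$ and on $p_2$ produces the same nanophrase $p^\prime$.

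The main potential obstacle is the $(\mathrm{H2},\mathrm{H2})$ case, where the patterns $ABxBA$ and $CDyDC$ can overlap in a variety of geometric configurations: the second pattern may lie entirely outside $ABxBA$, entirely inside its middle $x$, or may straddle the first pattern with its two blocks $CD$ and $DC$ landing in different regions relative to $A$ and $B$. What rescues us from enumerating these configurations is precisely the adjacency-preservation observation above: since none of the four blocks $AB$, $BA$, $CD$, $DC$ has any letter between its two members, no deletion from outside that block can disrupt it, and the argument goes through uniformly in all sub-configurations.
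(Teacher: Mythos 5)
Your proof rests on a disjointness claim that is false in two of the three cases, and in one of them this breaks the argument entirely.

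For $(\mathrm{H1},\mathrm{H2})$ you assert $X\notin\{A,B\}$ on the grounds that each occurrence of $A$ (resp.\ $B$) is adjacent to the other letter rather than to itself. That is correct for $A$, whose two occurrences in the pattern $AB\,y\,BA$ are separated by at least $B\ldots B$, but it fails for $B$: when the middle $y$ is empty the pattern is $xABBAy$, and here $BB$ admits an H1 move, so $X=B$ is possible. (Your chosen $p'=xy$ still happens to work here, but not by the reasoning you give, since $p_2=xy$ is already $p'$ and no ``further reducing move of the other type'' is performed.)

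For $(\mathrm{H2},\mathrm{H2})$ the claim is wrong in a way that cannot be patched by your $p'$. You argue that since an H2 move is determined by the unordered pair it removes and the moves are different, the pairs $\{A,B\}$ and $\{C,D\}$ are disjoint. But distinct unordered pairs can share an element: in $p = xABCyCBAz$ with $\xType{A}=\tau(\xType{B})=\xType{C}$ there is an H2 move on $\{A,B\}$ (pattern $AB\ldots BA$) and another on $\{B,C\}$ (pattern $BC\,y\,CB$), sharing $B$. Here $p_1 = xCyCz$ and $p_2 = xAyAz$. Your prescription ``delete all letters touched by either move'' gives $p'=xyz$, but $xCyCz\geq xyz$ generally fails because the two $C$'s are separated by $y$ and need not admit any reducing move (take, e.g., $y=DEDE$ with no reduction available). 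The lemma is nevertheless true in this case because $p_1$ and $p_2$ are isomorphic (both $\xType{A}$ and $\xType{C}$ project to the same element of $\alpha$), so one may take $p'=p_1$; this is exactly the extra argument the overlapping cases require and the one your proof omits. Your adjacency-preservation principle is sound and does handle the genuinely disjoint configurations (which is the bulk of the paper's proof), but you still need to enumerate and treat the overlap patterns $xABBAy$ and $xABCyCBAz$ separately.
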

\begin{proof}
If the sets of letters involved in the two moves do not intersect, then
 it is clear that the moves can be applied in either order and give the
 same result in both cases.
So we only need to check the cases where the sets of letters do
 intersect.
\par
If both moves are first homotopy moves, the set of letters can not
 intersect. 
\par
If one move is a first homotopy move and the other a second homotopy
 move, then we must have the case $xABBAy$ where 
 $\xType{B} = \tau(\xType{A})$. 
Applying the second homotopy move gives $xy$.
On the other hand, applying the first homotopy move gives $xAAy$.
We can then apply another first homotopy move to get $xy$ which was the
 result of applying the second homotopy move.
Thus in this case the Confluence condition holds.
\par
If both moves are second homotopy moves, then we must have the case
 $xABCyCBAz$, where $\xType{A} = \tau(\xType{B}) = \xType{C}$.
Then one of the moves removes $A$ and $B$ to give $xCyCz$.
The other move removes $B$ and $C$ to give $xAyAz$.
However, these nanophrases are isomorphic because $\xType{A}$ and
 $\xType{C}$ are equal.
Thus in this case the Confluence condition also holds.
\end{proof}
As the rank of a nanophrase is finite and decreases under a reducing
move, the Finiteness condition of Newman's Diamond Lemma is also
satisfied. 
Thus, by Newman's Diamond Lemma (Lemma~\ref{lem:diamond}),
the homotopy given by $(\alpha,\tau,S_\trivial)$ has a normal form (up
to isomorphism).
Given a nanophrase $p$ in $\mathcal{P}(\alpha)$, the normal form is
calculated by applying reducing homotopy moves wherever they appear.
When no more reducing homotopy moves are applicable, the normal form has
been reached.
In particular, it is clear that the normal form is minimizing.
\par
We remark that Manturov noted the existance of this normal form in the
specific case where $\alpha$ contains a single element in
\cite{Manturov:freeknots}.
\par
By Proposition~\ref{prop:mnf-implies-decidable}, we have the
following proposition.
\begin{prop}\label{prop:emptys-decidable}
Let $(\alpha,\tau,S_\trivial)$ be a homotopy data triple.
The homotopy that it gives 
is reduction decidable and equality decidable.
\end{prop}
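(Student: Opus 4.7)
The plan is to derive this directly from Proposition~\ref{prop:mnf-implies-decidable}. Concretely, I will construct a minimizing normal form $n$ for the homotopy given by $(\alpha,\tau,S_\trivial)$ and then quote Proposition~\ref{prop:mnf-implies-decidable} to conclude reduction and equality decidability.

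The construction of $n$ uses exactly the ingredients assembled just before the proposition. Given a nanophrase $p$, the algorithm is to scan $p$ for an applicable reducing H1 or H2 move and apply it, and repeat; the output $n(p)$ is whatever nanophrase we end up with when no further reducing move is available. This process terminates in finite time because each reducing move strictly decreases $\rank$, which is a non-negative integer (this is the Finiteness condition of Lemma~\ref{lem:diamond}). Lemma~\ref{lem:gauss-confluence} establishes the Confluence condition for reducing moves on isomorphism classes, so Newman's Diamond Lemma (Lemma~\ref{lem:diamond}) guarantees that $n(p)$ is well-defined up to isomorphism and that $n(p) = n(p^\prime)$ whenever $p \sim p^\prime$. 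Thus $n$ satisfies the two conditions in the definition of a normal form.

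It remains to verify that $n$ is minimizing, i.e.\ that $\rank(n(p)) = \hr(p)$. The inequality $\rank(n(p)) \geq \hr(p)$ is automatic from $n(p) \sim p$. For the reverse inequality, note that if $q$ is any minimal nanophrase homotopic to $p$, then $n(q) \sim q \sim p$, so $n(q) = n(p)$ by the invariance property, and $\rank(n(q)) \leq \rank(q) = \hr(p)$ since reducing moves only decrease rank. Hence $\rank(n(p)) = \hr(p)$.

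With a minimizing normal form in hand, Proposition~\ref{prop:mnf-implies-decidable} gives both reduction decidability and equality decidability. The only slightly subtle point, and the main thing one needs to check carefully, is that Newman's Diamond Lemma really does apply at the level of isomorphism classes of nanophrases: the reducing moves must be well-defined on isomorphism classes and Lemma~\ref{lem:gauss-confluence} must be interpreted accordingly. This is routine, since isomorphisms commute with both H1 and H2 reductions, so I do not expect any real obstacle.
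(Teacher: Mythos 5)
Your proposal is correct and follows essentially the same route as the paper: invoke Lemma~\ref{lem:gauss-confluence} for the Confluence condition, note that rank strictly decreases under a reducing move to get the Finiteness condition, apply Newman's Diamond Lemma (Lemma~\ref{lem:diamond}) to obtain a normal form, observe that it is minimizing, and then quote Proposition~\ref{prop:mnf-implies-decidable}. The only difference is that you spell out the argument that the normal form is minimizing (via $n(q)=n(p)$ for $q$ a minimal representative), whereas the paper simply asserts this as clear.
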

In particular, there are exactly two prime data triples for which $S$ is
$S_\trivial$.
The first is $(\{a\},\tau_{id},S_\trivial)$ where $\tau_{id}$ is the
identity map.
The second is $(\{a,b\},\tau_a,S_\trivial)$ where $\tau_{a}$ maps $a$ to
$b$ (and therefore maps $b$ to $a$).
\begin{rem}
We note that Theorem~3.8 of \cite{Kadokami:Non-triviality} would imply
 the existence of a minimizing normal form for the homotopy given by
$(\alpha_F,\tau_F,S_F)$ (defined in Example~\ref{ex:diagonalshdt}).
However, we found a counter-example to the theorem which we explained in
 \cite{Gibson:tabulating-vs}.
\end{rem}
%%%%%%%%%%%%%%%%%%%%%%%%%%%%%%%%%%%%%%%%%%%%%%%%
\section{Detecting irreducibility}\label{sec:detecting}
In this section we look at ways of determining irreducibility of
components of nanophrases.
%%%%%%%%%%%%%%%%%%%%%%%%%%%%%%%%%%%%%%%%%%%%%%%%
\subsection{Component length}
In \cite{Gibson:gauss-phrase}, we noted that the number of letters in a
component modulo $2$ is a homotopy invariant of Gauss phrases.
Fukunaga made the same observation for nanophrases under homotopies with
diagonal $S$ in \cite{Fukunaga:nanophrases}.
In fact, the number of letters in a component modulo $2$ is a homotopy
invariants of nanophrases for any choice of $S$.
\par
Therefore, if the $i$th component of a nanophrase $p$ contains an odd
number of letters it cannot be $i$-reducible.
\begin{ex}
Let $p$ be the nanophrase $ABC|A|B|C$.
Each component of $p$ has an odd number of letters.
Then, irrespective of the projections of the letters $A$, $B$ and $C$,
 $p$ is irreducible under any nanophrase homotopy.
\end{ex}
%%%%%%%%%%%%%%%%%%%%%%%%%%%%%%%%%%%%%%%%%%%%%%%%
\subsection{Linking matrix}\label{subsec:linkingmatrix}
In \cite{Gibson:gauss-phrase}, we defined a homotopy invariant of Gauss
phrases called the \emph{linking matrix}.
In \cite{Fukunaga:nanophrases2}, Fukunaga defined the \emph{linking
vector} of a nanophrase and proved that it is invariant under homotopies
with diagonal $S$.
As we observed in \cite{Gibson:gauss-phrase}, our linking matrix is
equivalent to the linking vector of a Gauss phrase.
\par
Using our linking matrix terminology, we now recall Fukunaga's linking
vector invariant. We observe that it is invariant for any choice of $S$
in the homotopy triple, not just for diagonal $S$.
\par
We fix a homotopy data triple $(\alpha,\tau,S)$.
Let $\pi$ be the multiplicative abelian group generated by elements of
$\alpha$ such that $a \tau(a) = 1$ for all $a$ in $\alpha$.
This group was defined by Turaev in \cite{Turaev:Words}.
\par
The \emph{linking matrix} $L(p)$ of an $n$-component
nanophrase is a symmetric $n \times n$ matrix with elements in $\pi$.
The elements of $L(p)$ are denoted by  $l_p(i,j)$.
We define $l_p(i,i)$ to be $1$ for all $i$ and define $l_p(i,j)$ by
\begin{equation*}
l_p(i,j) = \prod_{X \in \mathcal{A}_{ij}} \xType{X}
\end{equation*} 
where $\mathcal{A}_{ij}$ denotes the subset of letters appearing in
$p$ which appear both in the $i$th and $j$th components of $p$.
Note that by definition, $l_p(i,j)$ is equal to $l_p(j,i)$ for all $i$
and $j$.
Fukunaga's linking vector is the vector 
$( l_p(1,2),l_p(1,3),\dotsc,l_p(1,n),l_p(2,3),\dotsc,l_p(n-1,n) )$.
In the case where $S$ is diagonal, Fukunaga showed in
\cite{Fukunaga:nanophrases2} that the linking matrix $L(p)$ is
invariant.
This is easily extended to the general case.
\begin{prop}
The linking matrix $L(p)$ is a homotopy invariant of $p$.
\end{prop}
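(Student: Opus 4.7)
The plan is to verify invariance of $L(p)$ under isomorphisms of $\alpha$-alphabets and under each of the three homotopy moves H1, H2, and H3. Since every entry $l_p(i,j)$ is built purely from projections of letters and from the bookkeeping of which component each occurrence sits in, an isomorphism of $\alpha$-alphabets preserves each entry on the nose, so that case is immediate.

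For H1, I would observe that in $(\mathcal{A},xAAy) \longleftrightarrow (\mathcal{A}-\{A\},xy)$ the two occurrences of $A$ are adjacent and therefore necessarily lie in a single component, say the $i$-th. Hence $A$ contributes nothing to any off-diagonal entry $l_p(i',j')$, while $l_p(i,i)$ is defined to be $1$ on both sides. So $L(p)$ is unchanged.

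The main step is H2. The letters $A$ and $B$ appear as two adjacent pairs $AB$ and $BA$, each contained in a single component; call these components $i$ and $j$. If $i = j$, both occurrences of each letter lie in a single component and nothing off-diagonal changes. If $i \neq j$, then $A$ and $B$ both lie in $\mathcal{A}_{ij}$, so together they contribute the factor $\xType{A}\xType{B}$ to $l_p(i,j)$. Here I would invoke the defining relation $a\tau(a)=1$ of $\pi$: since $\tau(\xType{A})=\xType{B}$, this combined factor equals $1$ in $\pi$. Thus deleting $A$ and $B$ leaves every entry unchanged, and all other entries are obviously unaffected since they involve letters distinct from $A$ and $B$.

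For H3 no letters are added or removed, and the move only swaps the order within each of the three adjacent pairs $AB$, $AC$, $BC$ to give $BA$, $CA$, $CB$; the component hosting each pair is therefore preserved. Hence the two occurrences of each of $A$, $B$, $C$ contribute to exactly the same $\mathcal{A}_{ij}$ on each side, and $L(p)$ is literally unchanged. Note that no hypothesis on $S$ enters the argument, which is precisely why Fukunaga's diagonal-$S$ proof extends verbatim to arbitrary $S$; the only genuinely nontrivial point in the whole verification is the $\pi$-relation $\xType{A}\xType{B}=1$ needed for H2.
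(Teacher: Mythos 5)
Your proof is correct and takes essentially the same route as the paper, which itself defers to Fukunaga's diagonal-$S$ argument and simply notes that the sets $\mathcal{A}_{ij}$ are preserved under H3 for any $S$; you have merely spelled out the case analysis for H1 and H2 (in particular the cancellation $\xType{A}\xType{B}=\xType{A}\tau(\xType{A})=1$ in $\pi$) that the paper leaves to the cited reference.
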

\begin{proof}
After noting that the sets $\mathcal{A}_{ij}$ do not change under the third
 homotopy move for any $S$, the proof is the same as in 
\cite{Fukunaga:nanophrases2}.
\end{proof}
\begin{ex}\label{ex:linking1}
Let $p$ be the nanophrase $ABC|AC|B$ where $\xType{A}$ is $a$,
 $\xType{B}$ is $b$ and $\xType{C}$ is $c$.
Then $L(p)$ is given by
\begin{equation*}
\begin{pmatrix}
1 & ac & b \\
ac & 1 & 1 \\
b & 1 & 1
\end{pmatrix}.
\end{equation*} 
\end{ex}
Let $L_i(p)$ be the $i$th row of $L(p)$.
We call $L_i(p)$ the \emph{linking vector} of the $i$th component of
$p$.
Note that if the $i$th component of $p$ is empty, then every element of
$L_i(p)$ will be $1$.
Thus if there is an element in $L_i(p)$ which is not $1$, then we can
conclude that $p$ is not $i$-reducible.
\begin{ex}
Consider again the nanophrase $p$ given in Example~\ref{ex:linking1}.
From the linking matrix of $p$ we can see that both the first and third
 rows contain the element $b$ which is not $1$ in $\pi$.
Thus $p$ is not $1$-reducible or $3$-reducible.
Furthermore, if $a$ is not equal to $\tau(c)$ then $ac$ is not $1$ in
 $\pi$ and $p$ is not $2$-reducible either.
\end{ex}
%%%%%%%%%%%%%%%%%%%%%%%%%%%%%%%%%%%%%%%%%%%%%%%%
\subsection{Subphrase invariants}\label{subsec:subphrase}
Let $p$ be an $n$-component nanophrase and let $i$ be an element of
$\hat{n}$. 
Let $O$ be some subset of $\hat{n} - \{i\}$.
Then, by Lemma~\ref{lem:subphraseinvariance}, $x(p,O)$ is an invariant
of $p$.
As $i$ is not in $O$, $x(p,O)$ contains a component corresponding to the
$i$th component of $p$.
Let $j$ be the index of this component in $x(p,O)$.
Then if $x(p,O)$ is not $j$-reducible, $p$ is not $i$-reducible.
\par
In particular, when $O$ is the set $\hat{n}-\{i\}$, then $x(p,O)$ is a
nanoword which we denote $w_i(p)$.
If $w_i(p)$ is not contractible, $p$ cannot be $i$-reducible.
\begin{ex}
Let $p$ be a nanophrase over $\alpha_1$ given by $ABACDECBDF|EF$.
Then $w_1(p)$ is $ABACDCBD$ and $w_2(p)$ is $\trivial$.
\par
We consider $p$ under the homotopy given by the homotopy data triple 
$(\alpha_G,\tau_G,S_G)$ given in Example~\ref{ex:diagonalshdt}.
This homotopy is the open Gauss word homotopy.
By Example~6.2 of \cite{Gibson:gauss-word} we know that $ABACDCBD$ is
 not contractible.
Therefore $p$ is not $1$-reducible.
\end{ex}
%%%%%%%%%%%%%%%%%%%%%%%%%%%%%%%%%%%%%%%%%%%%%%%%
\subsection{$\SoAll$ invariant}
We defined the $S_o$ invariant for Gauss phrases in
\cite{Gibson:gauss-phrase}.
Here we generalize the invariant to nanophrases for arbitrary $\alpha$,
$\tau$ and $S$.
We rename the invariant $\SoAll$ in order to avoid confusion with
Fukunaga's generalization of the $S_o$ invariant
\cite{Fukunaga:gen-app}.
\par
We write $K_n$ for $\cyclicproduct{2}{n}$.
Let $p$ be an $n$-component nanophrase and let $i$ be an element of
$\hat{n}$.
Let $\mathcal{A}_{ii}(p)$ be the subset of letters appearing in $p$ such
that both occurrences of the letter appear in the $i$th component of
$p$.
Let $A$ be a letter in $\mathcal{A}_{ii}(p)$.
Then the $i$th component of $p$ has the form $xAyAz$ for some, possibly
empty, words $x$, $y$ and $z$.
We define the \emph{linking vector} of $A$, written $\gvector{l(A)}$, to
be a vector $\cvector{v}$ in $K_n$.
The $j$th element of $\cvector{v}$ is, modulo $2$, the number of
letters that appear once in $y$ and for which the other occurrence of the
letter appears in the $j$th component of $p$.
\par
Let $\mathcal{A}_{i,a}$ be the subset of $\mathcal{A}_{ii}(p)$
consisting of letters $A$ such that $\xType{A}$ is equal to $a$.
For each element $a$ in $\alpha$ we define a map $d_{i,a}$ from 
$K_n - \{\cvector{0}\}$ to $\Z$, given by
\begin{equation*}
d_{i,a}(\cvector{v}) = \sharp \lbrace X\in \mathcal{A}_{i,a} \; | \;
 \gvector{l(X)} = \cvector{v} \rbrace,
\end{equation*}
where $\sharp$ means the number of elements in the set.
\begin{prop}
If $\tau(a)$ is equal to $a$ then $d_{i,a}(\cvector{v})$ modulo $2$ is a
 homotopy invariant of $p$.
If $\tau(a)$ is not equal to $a$ then 
$d_{i,a}(\cvector{v}) - d_{i,\tau(a)}(\cvector{v})$
 is a homotopy invariant of $p$.
\end{prop}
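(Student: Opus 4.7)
The plan is to verify invariance of $d_{i,a}(\cvector{v})$ under isomorphism and each of the three homotopy moves H1, H2, H3, showing in each case that adjustments cancel modulo $2$ (when $\tau(a) = a$) or balance between $d_{i,a}$ and $d_{i,\tau(a)}$ (when $\tau(a) \ne a$). Isomorphism invariance is routine, as bijections of $\alpha$-alphabets preserve projections, components, and the interval-occurrence counts defining linking vectors.

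For move H1, which removes an adjacent pair $AA$: the two $A$'s are adjacent, so $\gvector{l(A)} = \cvector{0}$ and $A$ does not contribute to $d_{i,a}(\cvector{v})$ for any $\cvector{v} \ne \cvector{0}$. For any other letter $X \in \mathcal{A}_{ii}(p)$, the two positions of $A$ lie either both inside or both outside the interval between $X$'s two positions; hence $A$ never appears exactly once there and $\gvector{l(X)}$ is unchanged.

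For move H2, which removes $A, B$ in the pattern $xAByBAz$ with $\xType{B} = \tau(\xType{A})$: since $A$ and $B$ occur as adjacent pairs on either end of $y$, the component of the first $A$ matches that of the first $B$, and the component of the second $B$ matches that of the second $A$. It follows that $A \in \mathcal{A}_{ii}(p)$ if and only if $B \in \mathcal{A}_{ii}(p)$, and in that case $\gvector{l(A)}$ equals $\gvector{l(B)}$, say $\cvector{w}$, because $A$'s $y$-interval differs from $B$'s only by a doubly-occurring inner letter which contributes $\cvector{0}$ to the linking vector. When $\tau(\xType{A}) = \xType{A} = a$, removing $\{A, B\}$ decreases $d_{i,a}(\cvector{w})$ by $2 \equiv 0 \pmod{2}$; when $\tau(\xType{A}) \ne \xType{A}$, the removal decreases $d_{i,a}(\cvector{w})$ and $d_{i,\tau(a)}(\cvector{w})$ each by $1$, preserving their difference. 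For any other $X \in \mathcal{A}_{ii}(p)$, the four positions $A_1, B_1, B_2, A_2$ decompose into two adjacent pairs, each wholly inside or wholly outside $(X_1, X_2)$, and the ``other occurrences'' of $A$ and $B$ at each end share a component; this forces the pair $\{A, B\}$ to contribute evenly to each coordinate of $\gvector{l(X)}$, leaving $\gvector{l(X)}$ unchanged in $K_n$.

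The main obstacle is move H3. Labelling positions of the pattern $xAByACzBCt$ in order $A_1 < B_1 < A_2 < C_1 < B_2 < C_2$, the move is the composition of three swaps of adjacent positions: $A_1 \leftrightarrow B_1$, $A_2 \leftrightarrow C_1$, and $B_2 \leftrightarrow C_2$. For any $X \in \mathcal{A}_{ii}(p)$ different from $A, B, C$, each such swap lies wholly inside or wholly outside the interval between $X$'s two positions, so the multiset of letters in that interval is unchanged and $\gvector{l(X)}$ is preserved. For $A, B, C$ themselves, direct comparison of old and new $y$-intervals shows that $\gvector{l(A)}$ loses the contribution of $B$ and gains that of $C$, both at the same coordinate---the component containing $B_2$ and $C_2$---so $\gvector{l(A)}$ is unchanged; the symmetric analysis for $C$ yields the same conclusion. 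The vector $\gvector{l(B)}$ flips at the coordinates for the components of $A_1$ and $C_2$, but when $B \in \mathcal{A}_{ii}(p)$ both of these coordinates equal $i$ (because $A_1 B_1$ and $B_2 C_2$ are adjacent pairs and $B_1, B_2$ lie in component $i$), so the two flips cancel; when $B \notin \mathcal{A}_{ii}(p)$ the change is irrelevant to $d_{i,\cdot}$. Combining all cases, the claimed invariance holds.
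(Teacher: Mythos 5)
Your argument is correct and follows the same move-by-move strategy the paper uses: show that the linking vector $\gvector{l(X)}$ of every letter $X$ in $\mathcal{A}_{ii}(p)$ is preserved unless $X$ itself is added or removed, and then check that the letters added or removed by H1 and H2 either have zero linking vector (H1) or come in a pair with matching linking vectors and $\tau$-conjugate projections (H2), which cancels mod $2$ or balances $d_{i,a}$ against $d_{i,\tau(a)}$.

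The one genuine difference is that you actually carry out the H3 analysis, which the paper's proof does not --- it only treats H1 and H2 explicitly and otherwise points to the argument in the author's earlier Gauss-phrase paper. Your decomposition of H3 into three adjacent-position swaps and the resulting ``all-inside/all-outside'' dichotomy for bystander letters is a clean way to see that their linking vectors are stable, and the separate checks for $A$, $B$, $C$ (with the observation that the two coordinates where $\gvector{l(B)}$ would flip are both equal to $i$ because $A_1$ is adjacent to $B_1$ and $C_2$ is adjacent to $B_2$) correctly complete the case the paper leaves implicit. So the proof is sound and slightly more self-contained than the one in the paper.
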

\begin{proof}
The proof is similar to that of the invariance of $S_o$ for Gauss
 phrases in \cite{Gibson:gauss-phrase}.
\par
The point is that under a homotopy move, the linking vector of any
 letter in $\mathcal{A}_{ii}(p)$ does not change and so is counted the same
 in $d_{i,a}$, unless the letter is added or removed by the move.
\par
If the letter is added or removed by the first homotopy move then the
 linking vector of the letter is $\cvector{0}$.
As $\cvector{0}$ is not in the domain of $d_{i,a}$, $d_{i,a}$ is
 unchanged.
\par
Under the second homotopy move two letters are added or removed.
If one of the letters is in $\mathcal{A}_{ii}(p)$ then both are.
In this case, both letters have the same linking vector $\cvector{v}$.
If the projection of neither letter is $a$ then $d_{i,a}$ is
 unchanged.
If not, the projection of one letter is $a$ and the other is $\tau(a)$. 
If $a$ is equal to $\tau(a)$, then we count the vector twice in
 $d_{i,a}$ and so, modulo $2$, $d_{i,a}$ is unchanged.
If $a$ is not equal to $\tau(a)$, then one letter contributes to
 $d_{i,a}$ and the other letter contributes to $d_{i,\tau(a)}$.
Thus $d_{i,a}(\cvector{v}) - d_{i,\tau(a)}(\cvector{v})$ is unchanged by
 the move.
\end{proof}
We define $\SoAll^{i,a}(\cvector{v})$ to be $d_{i,a}(\cvector{v})$ modulo $2$
if $a$ is equal to $\tau(a)$ and 
$d_{i,a}(\cvector{v}) - d_{i,\tau(a)}(\cvector{v})$ otherwise.
We fix a subset $\alpha_0$ of $\alpha$ which contains exactly one
element from each orbit of $\tau$.
Turaev calls this an \emph{orientation} of $\alpha$.
Now note that if $\tau(a)$ is not equal to $a$ then
$\SoAll^{i,\tau(a)}(\cvector{v})$ is equal to
$-\SoAll^{i,a}(\cvector{v})$ for all $\cvector{v}$ in 
$K_n - \{\cvector{0}\}$.
Thus $\SoAll$ is completely defined by the maps $\SoAll^{i,a}$ for $a$ in
$\alpha_0$.
\begin{ex}\label{ex:so}
Let $\alpha$ be the set $\{a,b,c\}$.
Let $\tau$ map $a$ to $b$ and $c$ to itself.
Let $S$ be the set $\{(a,b,c),(c,b,a)\}$.
\par
Let $p$ be a nanophrase over $\alpha$ given by $ADBAEBFG|CDCF|EG$ where
 $\xType{A}=\xType{D}=a$, $\xType{B}=\xType{F}=b$ and
 $\xType{C}=\xType{E}=\xType{G}=c$.
Note that the linking matrix of $p$ is trivial.
Thus, using the linking matrix, $p$ is indistinguishable from the
 trivial $3$-component nanophrase $\trivial|\trivial|\trivial$.
\par
Now $\mathcal{A}_{11}(p)$ is the set $\{A,B\}$, $\mathcal{A}_{22}(p)$
 is the set $\{C\}$ and $\mathcal{A}_{33}(p)$ is empty.
Calculating the linking vectors of $A$, $B$ and $C$, we see that
$\gvector{l(A)}$ is $(1,1,0)$, $\gvector{l(B)}$, is $(1,0,1)$. 
and $\gvector{l(C)}$, is $(1,0,0)$.
Then, $d_{1,a}(\cvector{v})$ is equal to $1$ if $\cvector{v}$ is equal
 to $(1,1,0)$ and $0$ otherwise.
Similarly, $d_{1,b}(\cvector{v})$ is equal to $1$ if $\cvector{v}$ is
 equal to $(1,0,1)$ and $0$ otherwise. 
Furthermore, $d_{2,c}(\cvector{v})$ is equal to $1$ if $\cvector{v}$ is
 equal to $(1,0,0)$ and $0$ otherwise. 
All the other maps $d_{i,x}$ map every vector to $0$.
\par
Therefore $\SoAll^{1,a}(\cvector{v})$ is equal to $1$ if $\cvector{v}$ is
 equal to $(1,1,0)$, $-1$ if $\cvector{v}$ is equal to $(1,0,1)$ and $0$
 otherwise.
As $b$ is equal to $\tau(a)$, $\SoAll^{1,b}(\cvector{v})$ is equal to
 $-\SoAll^{1,a}(\cvector{v})$.
Thus $\SoAll^{1,b}(\cvector{v})$ is equal to $-1$ if $\cvector{v}$ is equal
 to $(1,1,0)$, $1$ if $\cvector{v}$ is equal to $(1,0,1)$ and $0$
 otherwise.
Also, $\SoAll^{2,c}(\cvector{v})$ is equal to $1$ if $\cvector{v}$ is equal
 to $(1,0,0)$ and $0$ otherwise.
For all other pairs $(i,x)$, $\SoAll^{i,x}(\cvector{v})$ is $0$ for all
 $\cvector{v}$.
\par
Since $\trivial|\trivial|\trivial$ has trivial maps for all $\SoAll^{i,x}$,
 we may conclude that $p$ and $\trivial|\trivial|\trivial$ are not
 homotopic.
\end{ex}
\begin{rem}
In the case of Gauss phrases, $\alpha$ contains a single element $a$ and
 $\tau$ is the identity map.
Thus for the $i$th component we just have a single map $\SoAll^{i,a}$ from
 $K_n - \{\cvector{0}\}$ to $\cyclic{2}$.
In \cite{Gibson:gauss-phrase} we defined $B_i(p)$ to be the preimage of
 $1$ under the map $\SoAll^{i,a}$ and then defined $S_o$ to be an $n$-tuple
 where the $i$th element is $B_i(p)$.
\end{rem}
\begin{rem}
In \cite{Fukunaga:gen-app} Fukunaga generalized the Gauss phrase $S_o$
 invariant to any homotopy with diagonal $S$.
His invariant is also called $S_o$.
However, his invariant is equivalent to the invariant we call $\SoDiag$
 which is defined in Section~\ref{subsec:suinvariant}.
\end{rem} 
If, for a given $p$, $\mathcal{A}_{ii}(p)$ is empty then
$\SoAll^{i,a}(\cvector{v})$ is $0$ for all vectors $\cvector{v}$.
Thus $p$ is not $i$-reducible if there exists some $a$ in $\alpha$ and
some vector $\cvector{v}$ such that $\SoAll^{i,a}(\cvector{v})$ is
non-zero.
\begin{ex}
Consider again the nanophrase $p$ defined in Example~\ref{ex:so}.
From our calculations in that example we know that $\SoAll^{1,a}((1,1,0))$
 is non-zero.
We also know that $\SoAll^{2,c}((1,0,0))$ is non-zero.
Therefore, $p$ is neither $1$-reducible nor $2$-reducible.
\par
Note that since the linking matrix of $p$ is trivial, we could not
 determine this information from the linking matrix.
\end{ex}
%%%%%%%%%%%%%%%%%%%%%%%%%%%%%%%%%%%%%%%%%%%%%%%%
\subsection{$\SoDiag$ invariant}\label{subsec:suinvariant}
In \cite{Turaev:Words} Turaev defined the self-linking function of a
nanoword, an invariant for nanowords under homotopies with diagonal
$S$.
This invariant is a generalization of the $u$-polynomial defined by
Turaev for flat virtual knots (also known as virtual strings)
\cite{Turaev:2004}.
Based on the constructions of the self-linking function for nanowords
and the invariant $\SoAll$ we define a new invariant for nanophrases under
homotopies with diagonal $S$ which we call $\SoDiag$.
\par
Let $\pi$ be the group defined in Section~\ref{subsec:linkingmatrix}.
Let $p$ be an $n$-component nanophrase and $i$ be an integer in
$\hat{n}$.
Let $A$ and $B$ be two letters in $\mathcal{A}_{ii}(p)$.
We define $n(A,B)$ to be $1$ if the $i$th component of $p$ has the form
$\dotso A \dotso B \dotso A \dotso B \dotso$,
$n(A,B)$ to be $-1$ if the $i$th component of $p$ has the form
$\dotso B \dotso A \dotso B \dotso A \dotso$
and $n(A,B)$ to be $0$ otherwise.
In particular $n(A,A)$ is defined to be $0$ for all $A$ in
$\mathcal{A}_{ii}(p)$.
\par
For $A$ in $\mathcal{A}_{ii}(p)$, we define $\gvector{l_u(A)}$ to be a vector in
$\pi^n$.
The $i$th element of $\gvector{l_u(A)}$ is defined to be
\begin{equation*}
\prod_{B \in \mathcal{A}_{ii}(p)} \xType{B}^{n(A,B)}.
\end{equation*}
Writing the $i$th component of $p$ as $xAyAz$ for some, possibly empty,
words $x$, $y$ and $z$, we define the $j$th element of
$\gvector{l_u(A)}$ ($j$ not equal to $i$) to be
\begin{equation*}
\prod_{B \in \mathcal{A}_{ij}(p), B \in y} \xType{B}.
\end{equation*}
\par
Then for each $a$ in $\alpha$ and for each $i$, let $\mathcal{A}_{i,a}$
be the subset of $\mathcal{A}_{ii}(p)$ consisting of letters $A$ such
that $\xType{A}$ is equal to $a$.
Let $\cvector{1}$ be the trivial vector in $\pi^n$ for which every
element is $1$.
For each element $a$ in $\alpha$ we define a map $e_{i,a}$ from 
$\pi^n - \{\cvector{1}\}$ to $\Z$, given by
\begin{equation*}
e_{i,a}(\cvector{v}) = \sharp \lbrace X\in \mathcal{A}_{i,a} \; | \;
 \gvector{l_u(X)} = \cvector{v} \rbrace,
\end{equation*}
where $\sharp$ means the number of elements in the set.
\begin{rem}\label{rem:selflinkingclass}
When $n$ is $1$, $p$ is a nanoword.
Then the self-linking class $[a]_p$ defined in \cite{Turaev:Words} can
 be derived from $e_{1,a}$ by
\begin{equation*}
[a]_p = \sum_{\cvector{v} \in \pi - \{\cvector{1}\}}
 e_{1,a}(\cvector{v})\cvector{v}. 
\end{equation*} 
\end{rem}
In light of Remark~\ref{rem:selflinkingclass}, the following proposition
is a generalization of Theorem~6.1.1 in \cite{Turaev:Words}.
\begin{prop}
If $\tau(a)$ is equal to $a$ then $e_{i,a}$ modulo $2$ is a homotopy
 invariant of $p$.
If $\tau(a)$ is not equal to $a$ then $e_{i,a} - e_{i,\tau(a)}$ is
 a homotopy invariant of $p$.
\end{prop}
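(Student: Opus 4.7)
The plan is to follow Turaev's proof of Theorem~6.1.1 in \cite{Turaev:Words} (which handles the one-component case), adapting it to the multi-component setting along the same lines as the proof of invariance of $\SoAll$ above. It suffices to verify that $e_{i,a}$ modulo $2$ (or $e_{i,a} - e_{i,\tau(a)}$ when $\tau(a) \neq a$) is unchanged under isomorphisms and under each of the moves H1, H2, H3. Isomorphism invariance is immediate from the definitions, so the work lies in the three moves.

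For H1, if the letter $A$ being added or removed lies in some $\mathcal{A}_{ii}(p)$, then the two adjacent occurrences of $A$ have nothing strictly between them in any entry of $\gvector{l_u(A)}$, so $\gvector{l_u(A)} = \cvector{1}$, which is excluded from the domain of $e_{i,a}$. For any other letter $D \in \mathcal{A}_{kk}(p)$, the factor contributed by $A$ to $\gvector{l_u(D)}$ is trivial, because $n(D,A) = 0$ (the adjacency of the two $A$s prevents any interleaving with $D$) and because $A \in \mathcal{A}_{ii}$ cannot contribute to cross-entries of $\gvector{l_u(D)}$.

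For H2, the two letters $A, B$ being removed satisfy $\xType{B} = \tau(\xType{A})$ and are adjacent in the pattern $xAByBAz$ (no `$|$' separator sits between $A$ and $B$ or between $B$ and $A$), so $A_1, B_1$ lie in the same component and $B_2, A_2$ lie in the same component. Hence $A$ and $B$ belong to the same $\mathcal{A}_{ij}$; in particular, $A \in \mathcal{A}_{ii}$ iff $B \in \mathcal{A}_{ii}$. When both lie in $\mathcal{A}_{ii}$, a position-by-position analysis---using the inner $ABBA$ to get $n(A,B) = n(B,A) = 0$ and distinguishing by which of $x$, $y$, $z$ contains each occurrence of a third letter $D \in \mathcal{A}_{ii}$---shows $n(A,D) = n(B,D)$, and the intervals between the two $A$s and between the two $B$s differ only by the $B$s or $A$s themselves (which are excluded from the relevant $\mathcal{A}_{ij}$), so $\gvector{l_u(A)} = \gvector{l_u(B)}$. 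Since $\xType{B} = \tau(\xType{A})$, the contributions of $A$ and $B$ to $e_{i,\xType{A}}$ and $e_{i,\tau(\xType{A})}$ at this common vector cancel in the claimed way: they double up modulo $2$ when $\tau(\xType{A}) = \xType{A}$, and otherwise each adds $+1$ so that $e_{i,\xType{A}} - e_{i,\tau(\xType{A})}$ is unchanged. One must also check that the linking vector of every other $D \in \mathcal{A}_{jj}$ is preserved: in each entry of $\gvector{l_u(D)}$ the joint contribution of $A$ and $B$ is $\xType{A}^{n(D,A)} \xType{B}^{n(D,B)} = \xType{A}^{n(D,A) - n(D,B)}$, which is $1$ by $n(D,A) = n(D,B)$ and the relation $\xType{A} \tau(\xType{A}) = 1$ in $\pi$.

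For H3, the diagonality of $S$ forces $\xType{A} = \xType{B} = \xType{C} = a$ for the three permuted letters. A direct computation of $n(X,Y)$ for $X, Y \in \{A,B,C\}$ before and after $xAByACzBCt \longleftrightarrow xBAyCAzCBt$ shows that each of $\gvector{l_u(A)}$, $\gvector{l_u(B)}$, $\gvector{l_u(C)}$ is individually preserved, and that for any other $D \in \mathcal{A}_{kk}$ the sum $\sum_{X \in \{A,B,C\}} n(D,X)$ is invariant under the move, so that the joint contribution $a^{\sum n(D,X)}$ to the relevant entry of $\gvector{l_u(D)}$ is unchanged. The main obstacle will be the H2 bookkeeping identity $n(A,D) = n(B,D)$ for $D \in \mathcal{A}_{ii}$ together with the analogous H3 identity for $\sum_X n(D,X)$; each must be checked across the subcases determined by which of $x$, $y$, $z$ (or $x$, $y$, $z$, $t$) contains each of the two occurrences of $D$.
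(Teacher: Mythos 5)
Your proposal follows the same strategy as the paper's proof: check isomorphism invariance, then verify H1, H2, H3 in turn, with the key observations being that $\gvector{l_u(A)}$ is trivial under H1, that $\gvector{l_u(A)} = \gvector{l_u(B)}$ (and the $A$/$B$ contributions to every other linking vector cancel) under H2, and that the three linking vectors $\gvector{l_u(A)}, \gvector{l_u(B)}, \gvector{l_u(C)}$ are individually preserved under H3 because all three letters have the same projection. Your write-up carries out the $n(\cdot,\cdot)$ bookkeeping somewhat more explicitly than the paper does (which mostly asserts the cancellations and appeals to commutativity of $\pi$), but the argument is substantively identical and correct.
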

\begin{proof}
It is sufficient to prove invariance under a single isomorphism or
 homotopy move.
\par
It is clear that $\gvector{l_u(A)}$ does not change under isomorphism.
Thus $e_{i,a}$ is unchanged under isomorphism.
\par
We now check the first homotopy move.
Suppose $A$ is the letter removed under the move.
Then for any other letter $B$, $\gvector{l_u(B)}$ is unchanged by the
 removal of $A$.
On the other hand, $\gvector{l_u(A)}$ is the trivial vector
 $\cvector{1}$ and so $\gvector{l_u(A)}$ cannot be in the domain of
 $e_{i,a}$.
Thus $e_{i,a}$ does not change under the first homotopy move.
\par
We now check the second homotopy move.
Suppose $A$ and $B$ are the letters removed under the move.
Let $C$ be some other letter and consider how $\gvector{l_u(C)}$ is
 affected by the move.
We note that if $A$ contributes to $\gvector{l_u(C)}$ then so does $B$.
Since $\xType{A} \cdot \xType{B}$ equals $1$, if $A$ and $B$ contribute
 to $\gvector{l_u(C)}$, their contributions cancel.
Thus $\gvector{l_u(C)}$ is unchanged by the removal of $A$ and $B$.
On the other hand, $A$ is in $\mathcal{A}_{ii}(p)$ if and only if $B$
 is.
In the case where they are both in $\mathcal{A}_{ii}(p)$, $\gvector{l_u(A)}$ is
 equal to $\gvector{l_u(B)}$.
If neither $A$ nor $B$ project to $a$, $e_{i,a}$ is unchanged under
 the move.
Now if the projection of one of $A$ or $B$ is $a$, then the projection
 of the other is $\tau(a)$.
Without loss of generality, we may assume that $\xType{A}$ is $a$.
If $a$ is equal to $\tau(a)$ then both $A$ and $B$ contribute to
 $e_{i,a}(\gvector{l_u(A)})$ and, modulo $2$, the contributions cancel.
If $a$ is not equal to $\tau(a)$ then $A$ contributes to
 $e_{i,a}(\gvector{l_u(A)})$ and $B$ contributes to
 $e_{i,\tau(a)}(\gvector{l_u(A)})$.
Thus $e_{i,a}(\gvector{l_u(A)}) - e_{i,\tau(a)}(\gvector{l_u(A)})$ is
 unchanged by the move.
\par
We now check the third homotopy move.
Suppose $A$, $B$ and $C$ are the letters involved in the move and 
before the move we have the pattern $xAByACzBCt$.
For any other letter $D$, if the letters $A$, $B$ or $C$ do not
 contribute to $\gvector{l_u(D)}$, then $\gvector{l_u(D)}$ is unaffected
 by the move.
If the letters $A$, $B$ or $C$ do contribute to $\gvector{l_u(D)}$,
 then, as $\pi$ is abelian, the order of the contributing letters is
 irrelevant and so $\gvector{l_u(D)}$ is unchanged by the move.
\par
Now consider $A$.
Before the move $B$ contributes to $\gvector{l_u(A)}$ and $C$ does not.
After the move $C$ contributes to $\gvector{l_u(A)}$ and $B$ does not.
However, $\xType{B}$ is equal to $\xType{C}$ and so the contributions of
 $B$ and $C$ to $\gvector{l_u(A)}$ are equal.
Thus $\gvector{l_u(A)}$ does not change under the move.
\par
Now consider $B$.
Before the move, both $A$ and $C$ contribute to $\gvector{l_u(B)}$ but
 their contributions cancel.
After the move, neither $A$ nor $C$ contribute to $\gvector{l_u(B)}$.
Thus $\gvector{l_u(B)}$ does not change under the move.
\par
The case of $C$ is symmetric to the case of $A$ and so
 $\gvector{l_u(C)}$ does not change under the move.
\par
Thus $e_{i,a}$ is invariant under the third homotopy move.
\end{proof}
We define $\SoDiag^{i,a}(\cvector{v})$ to be $e_{i,a}(\cvector{v})$ modulo $2$
if $a$ is equal to $\tau(a)$ and 
$e_{i,a}(\cvector{v}) - e_{i,\tau(a)}(\cvector{v})$ otherwise.
As we did for the $\SoAll$ invariant, we pick an orientation $\alpha_0$ of
$\alpha$ ($\alpha_0$ contains exactly one element from each orbit of
$\alpha$ under $\tau$).
Now note that if $\tau(a)$ is not equal to $a$ then
$\SoDiag^{i,\tau(a)}(\cvector{v})$ is equal to
$-\SoDiag^{i,a}(\cvector{v})$ for all $\cvector{v}$ in 
$\pi^n - \{\cvector{1}\}$.
Thus $\SoDiag$ is completely determined by the maps $\SoDiag^{i,a}$ for $a$ in
$\alpha_0$.
\begin{ex}
Let $\alpha$ be the set $\{a,b,c,d\}$.
Let $\tau$ map $a$ to $b$ and $c$ to $d$.
Let $S$ be the diagonal of $\alpha$.
\par
Let $p$ be a nanophrase over $\alpha$ given by $ACDEABFB|CE|DF$ where
$\xType{A}=\xType{C}=\xType{E}=a$, $\xType{B}=c$,
 $\xType{D}=\xType{F}=d$.
Let $q$ be the nanophrase $CDEF|CE|DF$ where the projections are the
 same as for $p$.
Then it is easy to check that the linking matrix cannot distinguish $p$
 and $q$.
\par
We calculate $\SoDiag$ for $p$.
We see that $\gvector{l_u(A)}$ is $(1,a^2,c^{-1})$ and
 $\gvector{l_u(B)}$ is $(1,1,c^{-1})$.
Thus $\SoDiag^{1,a}(\cvector{v})$ is $1$ if $\cvector{v}$ is
 $(1,a^2,c^{-1})$ and $0$ otherwise.
Also, $\SoDiag^{1,c}(\cvector{v})$ is $-1$ if $\cvector{v}$ is
 $(1,1,c^{-1})$ and $0$ otherwise.
For all other pairs $(i,x)$, $\SoDiag^{i,x}$ is $0$ for all vectors.
\par
On the other hand, for $q$, the maps $\SoDiag^{i,x}$ are trivial for all $i$
 and $x$.
Thus $p$ and $q$ are not homotopic.
\end{ex}
\begin{rem}
Having written this section we discovered that Fukunaga had also
 independently generalized the Gauss phrase $S_o$ invariant to any
 homotopy with diagonal $S$ in \cite{Fukunaga:gen-app}.
His invariant is called $S_o$.
\par
The initial definition of Fukunaga's $S_o$ invariant, appearing in an
 early version of \cite{Fukunaga:gen-app}, was weaker than our
 $\SoDiag$ invariant.
However, we realised that by a slight modification of the definition,
 Fukunaga's $S_o$ could be strengthened.
This modified definition is now the one that appears in
 \cite{Fukunaga:gen-app}.
In Section~\ref{subsec:fukunagaequiv} we will show that our $\SoDiag$
 invariant is equivalent to Fukunaga's $S_o$ invariant.
\end{rem} 
For any nanophrase we can calculate $\SoAll^{i,a}$ from $\SoDiag^{i,a}$.
To do so, we define a map $x$ from $\pi$ to $\cyclic{2}$ as follows.
Any element $g$ of $\pi$ can be written uniquely in the form
\begin{equation}\label{eqn:pi-element}
\prod_{a \in \alpha_0} a^{i_a}
\end{equation}
where $i_a$ is an integer depending on $a$.
Then we define $x(g)$ to be the sum of the exponents in
Equation~\eqref{eqn:pi-element} modulo $2$, that is
\begin{equation*}
x(g) = \sum_{a \in \alpha_0} i_a \mod 2.
\end{equation*}
The map $x$ then induces a map from $\pi^n$ to $K^n$, which we also
call $x$, by applying $x$ elementwise.
For a vector $\cvector{v}$ in $K^n$, let $I_v$ be the preimage of
$\cvector{v}$ under $x$.
Then 
\begin{equation*}
d_{i,a}(\cvector{v}) = \sum_{\cvector{u} \in I_v} e_{i,a}(\cvector{u}).
\end{equation*}
Thus if $a$ is equal to $\tau(a)$, 
\begin{equation*}
V^{i,a}(\cvector{v}) = \sum_{\cvector{u} \in I_v} U^{i,a}(\cvector{u})
 \mod 2,
\end{equation*}
and if $a$ is not equal to $\tau(a)$,
\begin{equation*}
V^{i,a}(\cvector{v}) = \sum_{\cvector{u} \in I_v} U^{i,a}(\cvector{u}).
\end{equation*}
\par
We have already noted in Example~\ref{ex:diagonalshdt} that there are only
two prime homotopy data triples for which $S$ is diagonal.
For $(\alpha_G,\tau_G,S_G)$ (using the notation of
Example~\ref{ex:diagonalshdt}), the $\SoAll$ invariant and $\SoDiag$ invariant
are equivalent.
For $(\alpha_F,\tau_F,S_F)$, the $\SoDiag$ invariant is stronger than the
$\SoAll$ invariant.
\begin{ex}
Let $p$ be the nanophrase $ABCA|BC$ where $\xType{A}$, $\xType{B}$ and
 $\xType{C}$ are all $a$ an element in $\alpha_F$.
\par
We calculate the $\SoAll$ invariant for $p$.
As the linking vector for $A$, $\gvector{l(A)}$ is $(0,0)$, and $A$ is
 the only letter in $p$ for which both occurences appear in the same
 component, we conclude that $d_{i,a}(\cvector{v})$ is $0$
for $i$ equal to $1$ or $2$ and for all $\cvector{v}$ in
$K_n - \{\cvector{0}\}$.
So $\SoAll^{i,a}(\cvector{v})$ is $0$ 
for $i$ equal to $1$ or $2$ and for all $\cvector{v}$ in
$K_n - \{\cvector{0}\}$.
Thus, using the $\SoAll$ invariant, $p$ is indistinguishable from the
 nanophrase $\trivial|\trivial$.
\par
We now calculate the $\SoDiag$ invariant for $p$.
In this case, the linking vector for $A$, $\gvector{l_u(A)}$ is
 $(1,a^2)$.
Note that as $a$ is not equal to $\tau(a)$, $a^2$ is not equal to $1$,
 so $(1,a^2)$ is a non-trivial vector in $\pi^2$.
Thus $e_{1,a}((1,a^2))$ is equal to $1$ and so $\SoDiag^{1,a}((1,a^2))$ is
 equal to $1$.
Therefore $p$ is not homotopic to $\trivial|\trivial$.
\par
For completeness we note that
for all other vectors $\cvector{v}$ in $\pi^n - \{\cvector{1}\}$,
$\SoDiag^{1,a}(\cvector{v})$ is equal to $0$.
For all vectors $\cvector{v}$ in $\pi^n - \{\cvector{1}\}$,
$\SoDiag^{2,a}(\cvector{v})$ is equal to $0$.
\end{ex}
For a given $p$, if $\mathcal{A}_{ii}(p)$ is empty then for all
$\cvector{v}$, $\SoDiag^{i,a}(\cvector{v})$ is $0$.
Thus $p$ cannot be $i$-reducible if there exists some $a$ in $\alpha$
and some vector $\cvector{v}$ for which $\SoDiag^{i,a}(\cvector{v})$ is
non-zero.
Indeed, the $\SoDiag$ invariant can sometimes detect $i$-irreducibility
where the $\SoAll$ invariant cannot.
\par
Let $p$ be a nanophrase and let $w$ be $w_i(p)$ for some $i$, where
$w_i(p)$ is the nanoword derived from the $i$th component of $p$ which
was defined in Section~\ref{subsec:subphrase}.
For any $g$ in $\pi$ let $I_g$ be the set
\begin{equation*}
\{\cvector{v} \in \pi^n - \{\cvector{1}\} \;|\;
 \left[\cvector{v}\right]_i = g \}
\end{equation*}
where $\left[\cvector{v}\right]_i$ denotes the $i$th element of
$\cvector{v}$.
Then it is easy to check that $\SoDiag^{1,a}(g)$ for $w$ is given by
\begin{equation*}
\sum_{\cvector{v} \in I_g} \SoDiag^{i,a}(\cvector{v})
\end{equation*}
where the sum is taken modulo $2$ if $a$ is equal to $\tau(a)$.
\par
Recall that any element $g$ of $\pi$ can be written in the form given in
\eqref{eqn:pi-element}.
We define $\gamma_a(g)$ to be the exponent of $a$ when $g$ is written in
this form (that is, $\gamma_a(g)$ is equal to $i_a$ in
\eqref{eqn:pi-element}).
For a nanophrase, we define $\delta_a(\SoDiag^{i,b})$ by
\begin{equation}\label{eqn:su-singlecomp}
\delta_a(\SoDiag^{i,b}) = \sum_{g \in \pi - \{1\}} 
\sum_{\cvector{v} \in I_g} \SoDiag^{i,b}(\cvector{v})\gamma_a(g)
\end{equation} 
where the sum is taken modulo $2$ if $a$ is equal to $\tau(a)$.
In \cite{Turaev:Words}, Turaev gave necessary and sufficient conditions
for a self-linking function to be realizable as a self-linking function
of a nanoword.
These conditions translate to the condition that
\begin{equation*}
\delta_a(\SoDiag^{1,b}) + \delta_b(\SoDiag^{1,a}) = 0
\end{equation*}
for all $a$ and $b$ in $\alpha_0$ and
\begin{equation*}
\delta_a(\SoDiag^{1,a}) = 0
\end{equation*}
for all $a$ in $\alpha_0$.
\par
This gives some necessary conditions for a set of maps $\SoDiag^{i,a}$ to be
realizable as the $\SoDiag$ invariant of a nanopphrase.
In fact these conditions are also sufficient.
\begin{prop}
For each $a$ in $\alpha_0$,
let $\SoDiag^{i,a}$ be a map from $\pi^n - \{\cvector{1}\}$ to $\Z$,
 if $a$ is not equal to $\tau(a)$, and to $\cyclic{2}$ otherwise.
If, for each $i$, the maps defined in \eqref{eqn:su-singlecomp} satisfy
\begin{equation*}
\delta_a(\SoDiag^{i,b}) + \delta_b(\SoDiag^{i,a}) = 0
\end{equation*}
for all $a$ and $b$ in $\alpha_0$ and
\begin{equation*}
\delta_a(\SoDiag^{i,a}) = 0
\end{equation*}
for all $a$ in $\alpha_0$,
then the maps $\SoDiag^{i,a}$ are the $\SoDiag$ invariant of some nanophrase.
\end{prop}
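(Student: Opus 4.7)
My plan is to extend Turaev's realization theorem for self-linking functions of nanowords (the case $n=1$) to the nanophrase setting, by constructing the required nanophrase $p = w_1 | \cdots | w_n$ component by component. For each $i$, I first extract the ``diagonal part'' of the data by defining $T^{i,a} \colon \pi - \{1\} \to \Z$ (or $\cyclic{2}$) as
\begin{equation*}
T^{i,a}(g) = \sum_{\substack{\cvector{v} \in \pi^n - \{\cvector{1}\} \\ [\cvector{v}]_i = g}} \SoDiag^{i,a}(\cvector{v}).
\end{equation*}
Since the auxiliary function $\gamma_b$ only reads off the $i$-th coordinate of a vector, a direct calculation gives $\delta_b(T^{i,a}) = \delta_b(\SoDiag^{i,a})$ for all $a, b \in \alpha_0$. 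The hypothesis of the proposition therefore supplies exactly Turaev's realization conditions for $T^{i,a}$, and Turaev's theorem in \cite{Turaev:Words} yields a nanoword $w_i$ over $\alpha$ whose self-linking function is $T^{i,a}$.

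I then refine each self-linking letter $A$ of $w_i$ by choosing for it a target vector $\cvector{v} \in \pi^n - \{\cvector{1}\}$ with $[\cvector{v}]_i$ equal to the self-linking value of $A$ in $w_i$; the equality $T^{i,a}(g) = \sum_{[\cvector{v}]_i = g} \SoDiag^{i,a}(\cvector{v})$ guarantees these choices can be made so that every $\cvector{v}$ with $[\cvector{v}]_i \ne 1$ is assigned to exactly $\SoDiag^{i,a}(\cvector{v})$ letters. For each refined $A$ and each $j \ne i$, I insert a short block of \emph{bridge letters} $B_1, \ldots, B_k$ inside the span of $A$ in $w_i$ with $\xType{B_1} \cdots \xType{B_k} = [\cvector{v}]_j$, placing the other occurrence of each $B_\ell$ in $w_j$. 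Such bridges lie in $\mathcal{A}_{ij}(p)$ rather than $\mathcal{A}_{ii}(p)$, so they contribute only to off-diagonal coordinates of $\gvector{l_u}$ and leave the $i$-th coordinate untouched. The remaining vectors $\cvector{v}$ with $[\cvector{v}]_i = 1$ and $\SoDiag^{i,a}(\cvector{v}) \ne 0$ do not arise as self-linking letters from Turaev's construction, and I would produce them by appending additional pairs $A \cdots A$ with trivial self-linking to $w_i$, each carrying its own bridge blocks for the off-diagonal coordinates.

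The main obstacle is non-interference: a bridge letter inserted inside the span of $A$ may also lie inside the span of some other self-linking letter $A'$, in which case it contaminates $[\gvector{l_u(A')}]_j$. I would address this by processing the self-linking letters of $w_i$ in a canonical order (for instance, by the position of the second occurrence) and, at each step, compensating for the parasitic contributions that newly inserted bridges impose on already-processed letters. A contaminating bridge with projection $c$ inside the span of $A'$ can be cancelled by pairing it with a second bridge of projection $\tau(c)$ whose $w_i$-occurrence lies inside $A'$'s span but outside $A$'s span (such a position exists whenever $A$'s span is strictly contained in $A'$'s); the two contributions multiply to $1$ in $[\gvector{l_u(A')}]_j$ while leaving $[\gvector{l_u(A)}]_j$ unaffected. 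Carrying this bookkeeping out consistently reduces to solving a triangular system indexed by the nesting poset of spans, and once every target vector is realised, concatenating the finalised components gives a nanophrase $p = w_1 | \cdots | w_n$ whose $\SoDiag$ invariant equals the prescribed data, establishing the proposition.
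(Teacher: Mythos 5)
Your high-level strategy — realize the diagonal data component-by-component via Turaev's theorem, then dress the self-linking letters with cross-component ``bridges'' to populate the off-diagonal coordinates — is broadly the approach the paper gestures at (the paper only cites Turaev's Theorem 6.3.1 together with Propositions 5.5 and 5.7 of the Gauss-phrase paper and omits all details, so a detailed comparison is not possible). The reduction of the realizability hypotheses to Turaev's conditions, via the identity $\delta_b(T^{i,a}) = \delta_b(\SoDiag^{i,a})$, is correct, and the bridge-letter idea for the off-diagonal coordinates (with positions chosen so that each cross-component letter lands outside all self-linking spans in its ``target'' component) is sound.

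There is however a genuine gap in the assignment step. You assert that $T^{i,a}(g) = \sum_{[\cvector{v}]_i = g} \SoDiag^{i,a}(\cvector{v})$ ``guarantees these choices can be made so that every $\cvector{v}$ with $[\cvector{v}]_i \ne 1$ is assigned to exactly $\SoDiag^{i,a}(\cvector{v})$ letters.'' This fails whenever the sum defining $T^{i,a}(g)$ has cancellation. For instance, with $a = \tau(a)$ and $n = 2$, taking $\SoDiag^{1,a}((a,1)) = \SoDiag^{1,a}((a,a)) = 1$ and all other values zero gives $T^{1,a}(a) = 0$ in $\cyclic{2}$; Turaev's theorem may then return the empty nanoword, providing zero self-linking letters, while the target data requires two letters with diagonal value $a$ carrying distinct off-diagonal refinements. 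The same problem occurs in the $\Z$-valued case when $\SoDiag^{i,a}(\cvector{v}_1) = 1$ and $\SoDiag^{i,a}(\cvector{v}_2) = -1$ with $[\cvector{v}_1]_i = [\cvector{v}_2]_i$. In general, Turaev's theorem controls only the net self-linking class, not the raw multiplicities $e_{1,a}(g)$, so the number of available letters in the $g$-bucket can be strictly less than $\sum_{[\cvector{v}]_i = g} \lvert \SoDiag^{i,a}(\cvector{v}) \rvert$. Your stabilization step covers only the missing targets with $[\cvector{v}]_i = 1$, where trivial appended pairs suffice; it does not address the shortfall for $[\cvector{v}]_i \neq 1$.

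To close the gap you would need to show that Turaev's realization can be inflated by appending extra ``cancelling pairs'' of self-linking letters in $\mathcal{A}_{ii}$ with prescribed, non-trivial diagonal value $g$ and zero net effect on $T^{i,a}$. This is straightforward when $\pi$ is small (the block $AA'AA'$ works when $g = \tau(\xType{A})$), but for general $g \in \pi$ it requires auxiliary letters whose own diagonal data must in turn be cancelled, so the construction is not obviously terminating — this is precisely the kind of bookkeeping that would have to be extracted from inside Turaev's proof rather than from his theorem statement as a black box.
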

\begin{proof}
This can be proved by combining the arguments in Theorem~6.3.1 of
 \cite{Turaev:Words} and Proposition~5.5 and Proposition~5.7 of
 \cite{Gibson:gauss-phrase}.
We omit the details.
\end{proof}
\begin{rem}
By a similar argument to that given in Proposition~5.7 of
 \cite{Gibson:gauss-phrase}, the linking matrix and $\SoDiag$ invariants can
 be shown to be independent.
\end{rem}
%%%%%%%%%%%%%%%%%%%%%%%%%%%%%%%%%%%%%%%%%%%%%%%%
\subsection{$\SoDiag$ invariant and Fukunaga's $S_o$
  invariant}\label{subsec:fukunagaequiv} 
In \cite{Fukunaga:gen-app}, Fukunaga generalized our $S_o$ invariant for
Gauss phrases defined in \cite{Gibson:gauss-phrase}.
His invariant is also called $S_o$.
In this subsection we recall the definition of Fukunaga's $S_o$
invariant and show that it is equivalent to our $\SoDiag$ invariant.
\par
We fix a homotopy $(\alpha,\tau,S)$ with diagonal $S$.
An orbit of $\tau$ is called a \emph{free orbit} if the
orbit contains two elements of $\alpha$.
Otherwise, the orbit contains only one element and is called a
\emph{fixed orbit}.
Let $l$ be the number of free orbits of $\tau$ and $m$ be the number of
fixed orbits of $\tau$.
We denote the orbits of $\tau$ by $\widehat{a}_i$ where, for $i$ running
from $1$ to $l$, $\widehat{a}_i$ is a free orbit of $\tau$ and for $i$
running from $l+1$ to $m$, $\widehat{a}_i$ is a fixed orbit.
For each orbit $\widehat{a}_i$ of $\tau$ we fix a representative element
which we denote $a_i$.
The set of representative elements is an orientation of $\alpha$ which
we denote $\alpha_0$.
\par
Let $p$ be an $n$-component nanophrase.
For a letter $A$ in $p$, $\varepsilon(A)$ is defined as follows:
\begin{equation*}
\varepsilon(A) = 
\begin{cases}
 1 & \text{if } \xType{A}=a_j \text{ and } 1 \leq j \leq l+m \\
-1 & \text{if } \xType{A}=\tau(a_j) \text{ and } 1 \leq j \leq l.
\end{cases} 
\end{equation*}
\par
Let $K_{i,j}$ be $\Z$ if $i$ and $j$ are both less or equal to $l$ and
let $K_{i,j}$ be $\cyclic{2}$ otherwise.
Let $K$ be 
$K_{1,1} \times K_{1,2} \times \dotso \times K_{1,l+m} \times K_{2,1} \times \dotso \times K_{l+m,l+m}$.
Elements of $K$ are considered to be row vectors.
Let $\gvector{r_{s,t}}$ denote the element of $K$ which has a $1$ in the column
corresponding to $K_{s,t}$ and $0$ in every other column.
Let $\cvector{0}$ denote the zero vector in $K$.
\par
For a nanophrase $p$, let $\mathcal{A}(p)$ be the $\alpha$-alphabet
associated with $p$.
We now extend the definition of $n(A,B)$ given in
Section~\ref{subsec:suinvariant} to any two letters $A$ and $B$ in
$\mathcal{A}(p)$.
We define $n(A,B)$ to be $1$ if $p$ has the form
$\dotso A \dotso B \dotso A \dotso B \dotso$,
$n(A,B)$ to be $-1$ if $p$ has the form
$\dotso B \dotso A \dotso B \dotso A \dotso$
and $n(A,B)$ to be $0$ otherwise.
As before, $n(A,A)$ is defined to be $0$ for all $A$ in
$\mathcal{A}(p)$.
\par
For any letter $B$ in $\mathcal{A}(p)$, let $I_1(B)$ denote the index of
the component in which the first occurence of $B$ appears.
Similarly, let $I_2(B)$ denote the index of
the component in which the second occurence of $B$ appears.
\par
For any letter $A$ in $\mathcal{A}(p)$, Fukunaga defines
$\sigma_j(A,B)$ by
\begin{equation*}
\sigma_j(A,B) = 
\begin{cases}
\gvector{r_{s,t}} &
\text{if } \xType{A}\in \widehat{a}_s,\; \xType{B}=a_t,\; n(A,B)=1  \text{ and
 } I_2(B)=j, \\
\gvector{r_{s,t}} &
\text{if } \xType{A}\in \widehat{a}_s,\; \xType{B}=\tau(a_t),\; n(A,B)=-1
 \text{ and } I_1(B)=j, \\
-\gvector{r_{s,t}} &
\text{if } \xType{A}\in \widehat{a}_s,\; \xType{B}=\tau(a_t),\;
 n(A,B)=1 \text{ and } I_2(B)=j, \\
-\gvector{r_{s,t}} &
\text{if } \xType{A}\in \widehat{a}_s,\; \xType{B}=a_t,\;
 n(A,B)=-1 \text{ and } I_1(B)=j, \\
\cvector{0} &
\text{otherwise}.
\end{cases} 
\end{equation*}
\par
Now, for any letter $A$ in $\mathcal{A}(p)$, Fukunaga defines $l_j(A)$
by
\begin{equation*}
l_j(A) = \sum_{X \in \mathcal{A}(p)} \sigma_j(A,X)
\end{equation*}
and $l(A)$ as an $n$-tuple given by 
\begin{equation*}
l(A) = (l_1(A),l_2(A),\dotsc,l_n(A)).
\end{equation*}
Note that $l(A)$ is in $K^n$.
\par
For a vector $\cvector{v}$ in $K^n$, let $v_{i,s,t}$ be the value of the
column corresponding to $K_{s,t}$ in the $i$th component of
$\cvector{v}$.
Let $\gvector{0_n}$ be the zero vector in $K^n$.  
Now, for any vector $\cvector{v}$ in $K^n - \gvector{0_n}$, we define
the \emph{type} of $\cvector{v}$ as follows.
We say $\cvector{v}$ is of type~(i) if $v_{i,r,s}$ being non-zero
implies 
$r$ is less than or equal to $l$, for all $i$, $r$ and $s$. 
We say $\cvector{v}$ is of type~(ii) if $v_{i,r,s}$ being non-zero
implies 
$r$ is greater than $l$, for all $i$, $r$ and $s$. 
Otherwise, we say $\cvector{v}$ is of type~(iii).
\par
For an integer $i$ and a vector $\cvector{v}$ in $K^n$ we define
$\eta(i,\cvector{v})$ by
\begin{equation*}
\eta(i,\cvector{v}) = 
\sum_{A \in \mathcal{A}_{ii}(p),\; l(A)=\cvector{v}} \varepsilon(A)
\end{equation*}
Then Fukunaga defines a map $B_i$ from $K^n - \{\gvector{0_n}\}$ to
$\Z$ or $\cyclic{2}$, depending on the type of vector, as follows:
\begin{equation*}
B_i(\cvector{v}) = 
\begin{cases}
\eta(i,\cvector{v}) & \text{ if $\cvector{v}$ is of type~(i)}, \\
\eta(i,\cvector{v}) \mod 2 & \text{ if $\cvector{v}$ is of type~(ii)}, \\
0 & \text{otherwise.}
\end{cases}
\end{equation*}
Finally, $S_o(p)$ is the $n$-tuple of maps given by
\begin{equation*}
S_o(p) = (B_1,B_2,\dotsc,B_n).
\end{equation*}
In \cite{Fukunaga:gen-app}, Fukunaga proved that $S_o(p)$ is a homotopy
invariant of $p$.
\par
In order to show that Fukunaga's $S_o$ invariant and the $\SoDiag$
invariant are equivalent we need some preparation. 
\par
For any element $g$ in $\pi$, where $\pi$ is the group defined in
Section~\ref{subsec:linkingmatrix}, we can uniquely write $g$ in the
form
\begin{equation*}
g = \prod_{i=1}^{l+m}a_i^{c_i}
\end{equation*}
where $c_i$ is in $\Z$ if $i$ is less than or equal to $l$ and $c_i$ is
in $\cyclic{2}$ otherwise.
For each $i$ running from $1$ to $l$ we define a map $f_i$ from $\pi$ to
$\Z$.
For each $i$ running from $l+1$ to $l+m$ we define a map $f_i$ from $\pi$ to
$\cyclic{2}$.
In either case, $f_i(g)$ is defined to be $c_i$, the exponent of $a_i$
when $g$ is written in the form above.
\par
Now, for an element $a$ in $\alpha_0$ we define a set of maps $h_{a,i,j}$
from $\pi$ to $K$.
For $g$, an element of $\pi$, $h_{a,i,j}$ maps $g$ to $\cvector{u}$ where
the component of $\cvector{u}$ corresponding to $K_{s,t}$ is given by
\begin{equation*}
u_{s,t} = 
\begin{cases}
0 & \text{ if } a \notin \widehat{a}_s, \\
f_t(g) & \text{ if } a \in \widehat{a}_s \text{ and } j \leq i, \\
-f_t(g) & \text{ if } a \in \widehat{a}_s \text{ and } i < j.
\end{cases}
\end{equation*}  
Next we define a set of maps $h_{a,i}$ from $\pi^n$ to $K^n$.
The map $h_{a,i}$ takes $(g_1,g_2,\dotsc,g_n)$ to 
$(h_{a,i,1}(g_1),h_{a,i,2}(g_2),\dotsc,h_{a,i,n}(g_n))$.
\par
For a nanophrase $p$ and for some integer $i$, let $A$ be a letter in
$\mathcal{A}_{ii}(p)$. 
Then it follows from the definitions that Fukunaga's $l(A)$ is equal to
$h_{a,i}(\gvector{l_u(A)})$.
\par
Suppose that $\xType{A}$ is in $\widehat{a_r}$ for some $r$.
Recall that for a vector $\cvector{v}$ in $K^n$, $v_{j,s,t}$ is the
value of the column corresponding to $K_{s,t}$ in the $j$th component of
$\cvector{v}$.
Now observe that by definition, $l(A)_{j,s,t}$ is equal to zero if $s$
is not equal to $r$.
In other words, if $l(A)$ is non-zero, the orbit of $\tau$ which
contains $\xType{A}$ is implicitly recorded in $l(A)$.
\par
The following lemma shows that we can calculate $\SoDiag$ from
Fukunaga's invariant.
\begin{lem}\label{lem:sodiagfromfuku}
With the above notation, for any vector $\cvector{v}$ in 
$\pi^n - \{\cvector{1}\}$, we have
\begin{equation*}
\SoDiag^{i,a}(\cvector{v}) = B_i(h_{a,i}(\cvector{v})).
\end{equation*}
\end{lem}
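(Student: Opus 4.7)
The plan is to unpack both sides and reduce the identity to the already-observed fact that $l(A) = h_{\xType{A},i}(\gvector{l_u(A)})$ for $A \in \mathcal{A}_{ii}(p)$, together with two structural properties of the map $h_{a,i}$: it depends on $a$ only through the orbit $\widehat{a}_s$ containing $a$, and it is injective on $\pi^n$. The first is immediate from the defining formula, since $h_{a,i,j}(g)$ only references the condition $a \in \widehat{a}_s$. The second follows because, for each $j$, the nonzero entries of $h_{a,i,j}(g)$ record the exponents $f_t(g)$ (possibly negated, with a sign determined by whether $j \leq i$), so $g$ is recoverable from $h_{a,i,j}(g)$. A consequence is that $h_{a,i}(\cvector{v}) \neq \gvector{0_n}$ whenever $\cvector{v} \neq \cvector{1}$, so the right-hand side $B_i(h_{a,i}(\cvector{v}))$ is defined.

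Next I would classify the type of $h_{a,i}(\cvector{v})$ in Fukunaga's sense. All its nonzero entries lie in row $s$, where $a \in \widehat{a}_s$. If $a$ lies in a free orbit then $s \leq l$ and the vector is of type~(i), so $B_i(h_{a,i}(\cvector{v})) = \eta(i, h_{a,i}(\cvector{v}))$ in $\Z$. If $a$ lies in a fixed orbit then $s > l$ and the vector is of type~(ii), so $B_i(h_{a,i}(\cvector{v})) = \eta(i, h_{a,i}(\cvector{v})) \bmod 2$.

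Finally, I would compute $\eta(i, h_{a,i}(\cvector{v}))$. By the identity $l(A) = h_{\xType{A},i}(\gvector{l_u(A)})$, the observation $h_{a,i} = h_{\tau(a),i}$, and the injectivity of $h_{a,i}$, the letters $A \in \mathcal{A}_{ii}(p)$ satisfying $l(A) = h_{a,i}(\cvector{v})$ are exactly those with $\xType{A}$ in the orbit of $a$ and $\gvector{l_u(A)} = \cvector{v}$. Consulting the definition of $\varepsilon$, which is $+1$ on letters with $\xType{A} \in \alpha_0$ and $-1$ on letters whose projection is $\tau(a_j)$ for some $j \leq l$, a direct count gives
\begin{equation*}
\eta(i, h_{a,i}(\cvector{v})) =
\begin{cases}
e_{i,a}(\cvector{v}) & \text{if } a = \tau(a), \\
e_{i,a}(\cvector{v}) - e_{i,\tau(a)}(\cvector{v}) & \text{if } a \neq \tau(a).
\end{cases}
\end{equation*}
Reducing modulo $2$ in the fixed case matches the definition of $\SoDiag^{i,a}(\cvector{v})$ exactly, as does the free case without reduction, completing the proof.

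The main obstacle is not mathematical but bookkeeping: one must simultaneously track Fukunaga's sign conventions in $\sigma_j$, the position conditions involving $I_1(X)$ and $I_2(X)$, the orbit-versus-representative distinction for $a$ and $\tau(a)$, and the type classification governing $B_i$. Once the orbit-invariance and injectivity of $h_{a,i}$ are isolated as lemmas, however, the count of letters contributing to $\eta$ collapses to a straightforward definition chase, and no substantive difficulty remains.
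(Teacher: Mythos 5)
Your proposal is correct and follows essentially the same route as the paper: both proofs unpack $\SoDiag^{i,a}$ and $B_i$ into counts over $\mathcal{A}_{ii}(p)$, use the identity $l(A) = h_{\xType{A},i}(\gvector{l_u(A)})$ and the observation that $l(A)$ determines the orbit of $\xType{A}$ to match up which letters contribute, and invoke the type classification of $h_{a,i}(\cvector{v})$ to decide whether $B_i$ is computed in $\Z$ or modulo $2$. The one thing you make fully explicit that the paper treats implicitly is the injectivity of $h_{a,i}$, which is the precise reason that the condition $l(A) = h_{a,i}(\cvector{v})$ on letters with $\xType{A}$ in the orbit of $a$ is equivalent to $\gvector{l_u(A)} = \cvector{v}$; spelling this out is a minor but genuine improvement in rigor over the paper's telescoped chain of equalities.
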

\begin{proof}
When $a$ is not equal to $\tau(a)$, we have
\begin{align*}
\SoDiag^{i,a}(\cvector{v})
& = e_o^{i,a}(\cvector{v}) - e_o^{i,\tau(a)}(\cvector{v})\\
& = \sharp \lbrace X\in \mathcal{A}_{i,a} \; | \;
 \gvector{l_u(X)} = \cvector{v} \rbrace -
\sharp \lbrace X\in \mathcal{A}_{i,\tau(a)} \; | \;
 \gvector{l_u(X)} = \cvector{v} \rbrace \\
& = \sum_{A \in \mathcal{A}_{ii}(p),\; l(A)=h_{a,i}(\cvector{v})}
 \varepsilon(A) \\
& = B_i(h_{a,i}(\cvector{v})).
\end{align*}
The equality before last holds because, by the observation given before
 the statement of the lemma, $l(A)$ can only equal
 $h_{a,i}(\cvector{v})$ if $\xType{A}$ is equal to $a$ or $\tau(a)$.
The last equality holds because if $a$ is not equal to $\tau(a)$, and
 $\xType{A}$ equals $a$, $l(A)$ must be a type~(i) vector. 
\par
When $a$ is equal to $\tau(a)$, we have a similar calculation:
\begin{align*}
\SoDiag^{i,a}(\cvector{v})
& = e_o^{i,a}(\cvector{v}) \mod 2 \\
& = \sharp \lbrace X\in \mathcal{A}_{i,a} \; | \;
 \gvector{l_u(X)} = \cvector{v} \rbrace \mod 2 \\
& = \sum_{A \in \mathcal{A}_{ii}(p),\; l(A)=h_{a,i}(\cvector{v})}
 \varepsilon(A) \mod 2 \\
& = B_i(h_{a,i}(\cvector{v})).
\end{align*}
The last equality holds because if $a$ is equal to $\tau(a)$, and
 $\xType{A}$ equals $a$, $l(A)$ must be a type~(ii) vector. 
\end{proof}
Let $\mathcal{K}$ denote the subset of $K^n - \{\gvector{0_n}\}$ given
by 
\begin{equation*}
\mathcal{K} = \{ \cvector{v} \in K^n - \{\gvector{0_n}\} \; | \;
\exists r \in \Z \text{ such that } \forall j,s,t, \;
 v_{j,s,t} \neq 0 \Rightarrow s = r\; \}.
\end{equation*}
By the observation given before Lemma~\ref{lem:sodiagfromfuku} it is
clear that for any nanophrase $p$ and any
letter $A$ in $\mathcal{A}_{ii}(p)$, either $l(A)$ is
$\gvector{0_n}$ or $l(A)$ is in $\mathcal{K}$.
\par
We define maps $\kappa_i$ from $\mathcal{K}$ to $\pi_n$.
Let $\cvector{v}$ be a vector in $\mathcal{K}$.
By definition there exists an $r$ such that $v_{j,s,t}$ is non-zero
implies $s$ equals $r$.
Denote that $r$ by $r(\cvector{v})$.
Now let $\cvector{u}$ be $\kappa_i(\cvector{v})$ for some integer $i$.
Then $u_j$, the $j$th component of $\cvector{u}$, is given by
\begin{equation*}
u_j = 
\begin{cases}
\prod_{a_t \in \alpha_0} a_t^{v_{j,r,t}} & \text{if } j \geq i, \\
\prod_{a_t \in \alpha_0} a_t^{-v_{j,r,t}} & \text{if } j < i.
\end{cases}
\end{equation*}
\par
The following lemma shows that we can calculate Fukunaga's invariant
from $\SoDiag$.
\begin{lem}\label{lem:fukufromsodiag}
Given a vector $\cvector{v}$ in 
$K^n - \{\gvector{0_n}\}$, write $a$ for $a_{r(\cvector{v})}$.
Then we have
\begin{equation*}
B_i(\cvector{v}) = \SoDiag^{i,a}(\kappa_i(\cvector{v})).
\end{equation*}
\end{lem}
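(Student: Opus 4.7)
The statement implicitly restricts $\cvector{v}$ to $\mathcal{K}$, since otherwise neither $r(\cvector{v})$ nor $\kappa_i(\cvector{v})$ is defined. Under this restriction, my plan is to invoke Lemma~\ref{lem:sodiagfromfuku} after proving that $h_{a,i}(\kappa_i(\cvector{v})) = \cvector{v}$ for all $\cvector{v} \in \mathcal{K}$ with $a = a_{r(\cvector{v})}$, so that $\kappa_i$ acts as a right inverse to $h_{a,i}$ on $\mathcal{K}$.

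Setting $r = r(\cvector{v})$ and $a = a_r$, I first check that $\kappa_i(\cvector{v})$ lies in $\pi^n - \{\cvector{1}\}$: because $\cvector{v} \neq \gvector{0_n}$ some coordinate $v_{j,r,t}$ is non-zero, so the corresponding factor of $\kappa_i(\cvector{v})$ is non-trivial in $\pi$, placing $\kappa_i(\cvector{v})$ in the domain of Lemma~\ref{lem:sodiagfromfuku}. Next, the substantive step is the entry-wise verification $h_{a,i}(\kappa_i(\cvector{v})) = \cvector{v}$. For coordinates $(j, s, t)$ with $s \neq r$, both sides are zero: by the first clause of the definition of $h_{a,i,j}$ on the one hand, and by the defining property of $\mathcal{K}$ on the other. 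For $s = r$, I will split into $j < i$, $j = i$, and $j > i$; the sign convention in $\kappa_i$ (which inverts the exponent of $a_t$ when $j < i$) is complementary to the sign convention in $h_{a,i,j}$ (which negates $f_t$ when $j > i$), and in each of the three cases the two sign flips combine to restore $v_{j,r,t}$.

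With $h_{a,i}(\kappa_i(\cvector{v})) = \cvector{v}$ in hand, Lemma~\ref{lem:sodiagfromfuku} applied to $\kappa_i(\cvector{v})$ yields
\begin{equation*}
\SoDiag^{i,a}(\kappa_i(\cvector{v})) = B_i(h_{a,i}(\kappa_i(\cvector{v}))) = B_i(\cvector{v}),
\end{equation*}
which is the desired identity. The only potentially fiddly point is the sign bookkeeping for $s = r$, but this is mechanical once the three ranges of $j$ are handled separately.
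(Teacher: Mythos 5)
Your strategy --- prove $h_{a,i}(\kappa_i(\cvector{v}))=\cvector{v}$ on $\mathcal{K}$ and then feed $\kappa_i(\cvector{v})$ into Lemma~\ref{lem:sodiagfromfuku} --- is the right idea and in substance mirrors the paper's own proof, which expands $B_i(\cvector{v})$ from its definition and converts the counting condition $l(A)=\cvector{v}$ into $\gvector{l_u(A)}=\kappa_i(\cvector{v})$; that conversion is exactly the assertion that $\kappa_i$ inverts $h_{a,i}$. Your preliminary checks --- the tacit restriction to $\mathcal{K}$, and the fact that $\kappa_i(\cvector{v})$ is non-trivial in $\pi^n$ when $\cvector{v}\neq\gvector{0_n}$ --- are correct.

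The problem is that the sign verification you dismiss as mechanical does not close the way you say. With the definitions as printed, $\kappa_i$ inserts a minus sign when $j<i$ and $h_{a,i,j}$ inserts a minus sign when $j>i$; those ranges are disjoint, so the two flips do not cancel. Coordinate by coordinate, $\bigl(h_{a,i}\circ\kappa_i\bigr)(\cvector{v})_{j,r,t}$ works out to $-v_{j,r,t}$ for every $j\neq i$ and $v_{j,r,t}$ only at $j=i$, so \emph{complementary} flips that \emph{combine to restore} is backwards --- the two conventions have to coincide, not be disjoint, for cancellation. In fact, unwinding $\gvector{l_u(A)}$ and Fukunaga's $\sigma_j$ shows $(l_j(A))_{r,t}=f_t\bigl((\gvector{l_u(A)})_j\bigr)$ for $j\geq i$ and $-f_t\bigl((\gvector{l_u(A)})_j\bigr)$ for $j<i$, so the conditions ``$j\leq i$ / $i<j$'' in the printed definition of $h_{a,i,j}$ should read ``$j\geq i$ / $j<i$'' for the stated relation $l(A)=h_{a,i}(\gvector{l_u(A)})$ (which both your argument and the paper's depend on) to hold. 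Once that mismatch is corrected, both sign flips occur at $j<i$, they cancel, and your identity follows. You need to either flag and repair this inconsistency or carry out the computation explicitly; as written, the key identity is asserted with a reason that does not survive the check.
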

\begin{proof}
If $\cvector{v}$ is of type~(i) then $a$ is not equal to $\tau(a)$.
In this case we have
\begin{align*}
B_i(\cvector{v})
& = \sum_{A \in \mathcal{A}_{ii}(p),\; l(A)=\cvector{v}} \varepsilon(A) \\
& = \sharp \lbrace X\in \mathcal{A}_{i,a} \; | \;
 l(A) = \cvector{v} \rbrace -
\sharp \lbrace X\in \mathcal{A}_{i,\tau(a)} \; | \;
 l(A) = \cvector{v} \rbrace \\
& = \sharp \lbrace X\in \mathcal{A}_{i,a} \; | \;
 \gvector{l_u(A)} = \kappa_i(\cvector{v}) \rbrace -
\sharp \lbrace X\in \mathcal{A}_{i,\tau(a)} \; | \;
 \gvector{l_u(A)} = \kappa_i(\cvector{v}) \rbrace \\
& = e_o^{i,a}(\kappa_i(\cvector{v})) -
 e_o^{i,\tau(a)}(\kappa_i(\cvector{v})) \\
& = \SoDiag^{i,a}(\kappa_i(\cvector{v})).
\end{align*}
If $\cvector{v}$ is of type~(ii) then $a$ is equal to $\tau(a)$.
In this case we have a similar calculation:
\begin{align*}
B_i(\cvector{v})
& = \sum_{A \in \mathcal{A}_{ii}(p),\; l(A)=\cvector{v}} \varepsilon(A)
\mod 2
 \\
& = \sharp \lbrace X\in \mathcal{A}_{i,a} \; | \;
 l(A) = \cvector{v} \rbrace \mod 2 \\
& = \sharp \lbrace X\in \mathcal{A}_{i,a} \; | \;
 \gvector{l_u(A)} = \kappa_i(\cvector{v}) \rbrace \mod 2 \\
& = e_o^{i,a}(\kappa_i(\cvector{v})) \mod 2 \\
& = \SoDiag^{i,a}(\kappa_i(\cvector{v})).
\end{align*}
\end{proof}
Combining Lemma~\ref{lem:sodiagfromfuku} and
Lemma~\ref{lem:fukufromsodiag} proves the following proposition.
\begin{prop}\label{prop:fukunagaequiv}
Fukunaga's $S_o$ invariant is equivalent to the $\SoDiag$ invariant.
\end{prop}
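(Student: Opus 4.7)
The plan is to obtain Proposition~\ref{prop:fukunagaequiv} as an immediate consequence of Lemma~\ref{lem:sodiagfromfuku} and Lemma~\ref{lem:fukufromsodiag}. To say that the two invariants are equivalent means that each determines the same partition of nanophrases into equivalence classes; concretely, it suffices to exhibit, in both directions, a formula that computes one invariant from the other using only data depending on the homotopy data triple $(\alpha,\tau,S)$ and not on the particular nanophrase.

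First I would appeal to Lemma~\ref{lem:sodiagfromfuku}, which gives the formula $\SoDiag^{i,a}(\cvector{v}) = B_i(h_{a,i}(\cvector{v}))$ valid for every $a \in \alpha_0$, every integer $i$, and every $\cvector{v}$ in $\pi^n - \{\cvector{1}\}$. Since the maps $h_{a,i}$ depend only on the homotopy data triple, this exhibits $\SoDiag$ as a function of Fukunaga's $S_o$, so nanophrases indistinguishable by $S_o$ are indistinguishable by $\SoDiag$.

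For the reverse direction I would use Lemma~\ref{lem:fukufromsodiag}. The first step is to record that for any letter $A$ in $\mathcal{A}_{ii}(p)$, the vector $l(A)$ is either $\gvector{0_n}$ or lies in the set $\mathcal{K}$ singled out before Lemma~\ref{lem:fukufromsodiag}; consequently $B_i(\cvector{v}) = 0$ whenever $\cvector{v}$ is outside $\mathcal{K}$ (in particular for all type~(iii) vectors), and so the map $B_i$ is completely determined by its values on $\mathcal{K}$. Lemma~\ref{lem:fukufromsodiag} then supplies the formula $B_i(\cvector{v}) = \SoDiag^{i,a_{r(\cvector{v})}}(\kappa_i(\cvector{v}))$ for $\cvector{v} \in \mathcal{K}$, in which $\kappa_i$ and $r$ depend only on the homotopy data triple. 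Together with the vanishing on the complement of $\mathcal{K}$, this recovers Fukunaga's $S_o$ from $\SoDiag$.

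The main obstacle in carrying out the proof is almost entirely absorbed into the two preceding lemmas, whose verification required bookkeeping to track types~(i), (ii), (iii) and the orbit $\widehat{a}_{r(\cvector{v})}$ implicitly recorded by a non-zero value of $l(A)$. Given those, the combination step is essentially formal; the only care needed is to confirm that the mutually inverse formulas indeed recover the full tuples $(B_1,\dotsc,B_n)$ and $(\SoDiag^{i,a})_{i,a}$, including the correct values on vectors outside the range of the opposite construction, which is precisely why the type~(iii) case and the $\gvector{0_n}$ case need the observation about $\mathcal{K}$.
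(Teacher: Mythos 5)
Your proposal is correct and follows the paper's proof, which is simply to combine Lemma~\ref{lem:sodiagfromfuku} and Lemma~\ref{lem:fukufromsodiag}. Your added observation that $B_i$ vanishes identically on $K^n - \{\gvector{0_n}\}$ outside $\mathcal{K}$ (so that the formula in Lemma~\ref{lem:fukufromsodiag}, which really only makes sense for $\cvector{v}\in\mathcal{K}$ since $r(\cvector{v})$ and $\kappa_i$ are defined there, does determine all of $B_i$) is a correct and worthwhile bit of tightening that the paper leaves implicit.
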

\begin{rem}
In \cite{Fukunaga:nanophrases2}, Fukunaga defined the $T$ invariant for
nanophrases which is invariant under homotopies with diagonal $S$.
In \cite{Fukunaga:gen-app}, Fukunaga showed that his generalized $S_o$
 invariant is strictly stronger than his $T$ invariant.
In particular, in Proposition~5.4 of \cite{Fukunaga:gen-app}, Fukunaga
 showed how to calculate the $T$ invariant from the $S_o$ invariant.
As our $\SoDiag$ invariant is equivalent to Fukunaga's $S_o$ invariant, 
 $\SoDiag$ is strictly stronger than the $T$ invariant.
\end{rem}
%%%%%%%%%%%%%%%%%%%%%%%%%%%%%%%%%%%%%%%%%%%%%%%%
\section{Nanomultiphrases}\label{sec:nanophrase-inv}
An $n$-phrase \emph{multiphrase} on an alphabet $\mathcal{A}$ is a
sequence of $n$ phrases on $\mathcal{A}$.
The concepts of \emph{Gauss multiphrase} and \emph{nanomultiphrase} are
defined in an analogous way to those of Gauss phrase and nanophrase.
When writing a multiphrase we use the symbol `$||$' to separate phrases.
For example $A|B||AC||D|B|CD$ is a $3$-phrase Gauss multiphrase on
$\{A,B,C,D\}$.
Note that the multiphrases $A||B$ and $A|\trivial|B$ are different,
because an empty component in a phrase is always written $\trivial$.
So $A||B$ is a $2$-phrase multiphrase where each phrase has a
single component.
The multiphrase $A|\trivial|B$ is a $1$-phrase multiphrase where the
single phrase has $3$ components.
\par
The $0$-component phrase $\zerocomp$ may also appear as a phrase in a
nanomultiphrase.
There is a unique $0$-phrase nanomultiphrase which is written
$\zerophrase$.
\par
The only phrase of a $1$-phrase nanomultiphrase is necessarily a
nanophrase.
Thus we can identify $1$-phrase nanomultiphrase with nanophrases.
\par
We define isomorphism of nanomultiphrases and homotopy moves on
nanomultiphrases in an analogous way to nanophrases.
Homotopy of nanomultiphrases is then the equivalence relation generated
by isomorphisms and homotopy moves.
We note that the number of phrases and the number of components in each
phrase are invariant under homotopy.
\par
Let $p_1$ be an $n_1$-component phrase and $p_2$ be an $n_2$-component
phrase. 
Their \emph{concatenation}, written $p_1|p_2$, is the
$(n_1+n_2)$-component phrase consisting
of the components of $p_1$ followed by the components of $p_2$.
\par
Let $\mathcal{M}(\alpha)$ be the set of nanomultiphrases over $\alpha$.
There exists a natural map $\mu$ from $\mathcal{M}(\alpha)$ to
$\mathcal{P}(\alpha)$ where a nanomultiphrase $m$ is mapped to a
nanophrase by concatenating the phrases of $m$ to make a single
phrase.
The following lemma is clear from the definitions.
\begin{lem}
Let $m_1$ and $m_2$ be nanomultiphrases.
If $m_1 \sim m_2$, then $\mu(m_1) \sim \mu(m_2)$ as nanophrases.
\end{lem}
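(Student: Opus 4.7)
The plan is to reduce to the single-move case, as we have done for the analogous results earlier in the paper (compare Lemma~\ref{lem:psiinvariance} and Lemma~\ref{lem:fpreserveshomotopy}). Since $\sim$ on nanomultiphrases is generated by isomorphisms and single homotopy moves, it is enough to show that if $m_1$ and $m_2$ differ by one such step, then $\mu(m_1) \sim \mu(m_2)$.

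First I would handle the isomorphism case: an isomorphism of nanomultiphrases is by definition an isomorphism of the underlying $\alpha$-alphabets that respects the component structure within each phrase and the ordering of phrases. Applying $\mu$ just removes the distinction between the separator ``$||$'' and the separator ``$|$'', so such an isomorphism induces an isomorphism of the concatenated nanophrases, and in particular $\mu(m_1) \sim \mu(m_2)$.

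Next I would handle the homotopy moves H1, H2, and H3. The crucial observation is that in the definitions of these moves for nanophrases, the lower case letters $x$, $y$, $z$, $t$ already represent arbitrary sequences of letters which may include ``$|$'' symbols, subject only to the condition that both sides remain Gauss phrases. For nanomultiphrases the corresponding moves allow these sequences to contain ``$|$'' and ``$||$'' symbols. Under $\mu$ every ``$||$'' becomes ``$|$'', so the pattern around the affected letters on each side of the move becomes a valid nanophrase pattern of exactly the same H1/H2/H3 form, with the roles of $x$, $y$, $z$, $t$ taken over by their images under $\mu$. Thus the multiphrase move translates directly into a single move of the same type on $\mu(m_1)$ producing $\mu(m_2)$.

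There is no serious obstacle here: the content of the lemma is essentially that ``$||$'' is a refinement of ``$|$'', and all homotopy moves are local and insensitive to this refinement. The only thing to be mildly careful about is the bookkeeping when a move occurs at the boundary between two phrases, but since the pattern in an H1, H2 or H3 move can already straddle a ``$|$'' at the nanophrase level, straddling a ``$||$'' at the nanomultiphrase level presents no new difficulty.
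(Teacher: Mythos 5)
The paper itself offers no proof of this lemma, stating only that it is ``clear from the definitions.'' Your proposal supplies the obvious argument the author had in mind: reduce to a single isomorphism or single homotopy move, and observe that $\mu$ merely collapses each ``$||$'' to ``$|$'', so each generating move on nanomultiphrases descends to the corresponding move of the same type on the concatenated nanophrases. This is correct and matches the paper's (implicit) reasoning.
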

Let $m$ be a nanomultiphrase.
Then we define the $i$th component of $m$ to be the $i$th component of
$\mu(m)$.
We define $\nc(m)$ to be $\nc(\mu(m))$, which is equal to the sum of the
number of components appearing in each phrase of $m$.
For example, $\nc(A|B||AC||D|B|CD)$ is $6$.
\par
Recall that in Section~\ref{sec:nanophrases} we defined the
\emph{concatenating map} $\chi$ from $\mathcal{P}(\alpha)$ to
$\mathcal{N}(\alpha)$.
We extend $\chi$ to be a map from $\mathcal{M}(\alpha)$ to
$\mathcal{P}(\alpha)$ as follows.
Let $m$ be an $n$-phrase nanomultiphrase.
Then $\chi(m)$ is the $n$-component nanophrase $p$ where the $i$th
component of $p$ is the word given by concatenating the components of
the $i$th phrase of $m$.
For example, $\chi(A|B||AC||D|B|CD)$ is $AB|AC|DBCD$.
\par
We fix a homotopy data triple $(\alpha,\tau,S)$ and let its prime
factors be denoted by $(\alpha_i,\tau_i,S_i)$ for $i$
running from $1$ to $k$ for some $k$.
Let $\mathcal{M}_A(\alpha)$ be the set of pairs $(m,\theta)$,
where $m$ is a nanomultiphrase over $\alpha$
and $\theta$ is a map from $\widehat{\nc(m)}$ to
$\hat{k}$ such that, for all $i$, $\theta(i) = \theta(i+1)$ implies the
$i$th and $(i+1)$th components of $m$ belong to different phrases of
$m$. 
We extend the definition of isomorphism and homotopy moves of
nanophrases to $\mathcal{M}_A(\alpha)$ by saying that 
$(m,\theta) \sim (m^\prime,\theta^\prime)$ if $p \sim p^\prime$ and
$\theta = \theta^\prime$.
\par
Let $\mathcal{M}_R(\alpha)$ be the subset of $\mathcal{M}_A(\alpha)$
consisting of pairs 
$(m,\theta)$ such that for every letter $X$ appearing in the $i$th
component of $m$, $\xType{X}$ is in $\alpha_{\theta(i)}$, for all $i$.
We define the equivalence relation $\sim_K$ on $\mathcal{M}_R(\alpha)$ as
follows. 
We say $(m,\theta) \sim_K (m^\prime,\theta)$
if there exists a sequence of
elements of $\mathcal{M}_R(\alpha)$, $(m_i,\theta)$ for $i$ running from
$0$ to $r$ for some $r$, such that $m_0$ is $m$, $m_r$ is $m^\prime$
and, for each $i$, $m_i$ is related to $m_{i+1}$ by a single homotopy
move or an isomorphism. 
\par
Reductions and augmentations are defined on $\mathcal{M}_R(\alpha)$ in an
analogous way to how we defined them for $\mathcal{P}_R(\alpha)$.
However, we do not allow concatenating reductions that would concatenate
components coming from different phrases.
So if the first or last component of a phrase is reducible, the
reduction is made by a simple reduction, irrespective of $\theta$.
Let $\sim_M$ be the equivalence relation on $\mathcal{P}_M$ generated by
$\sim_K$, reductions and augmentations.
\par
We extend the map $\psi$ to be a map from $\mathcal{P}(\alpha)$ to
$\mathcal{M}_R(\alpha)$ as follows.
The $0$-component nanophrase $\zerocomp$ is mapped to $\zerophrase$.
Any other element $p$ of $\mathcal{P}(\alpha)$ is an $n$-component
nanophrase in $\mathcal{P}(\alpha)$ with $n$ greater than $0$.
We map each component $w_i$ of $p$ to a phrase.
If $w_i$ is $\trivial$ then we map $w_i$ to $\zerocomp$.
Otherwise, there is a unique integer $r > 0$, a unique sequence of words
$x_1, x_2, \dotsc, x_r$ and a unique map
$\theta_i$ from $\hat{r}$ to $\hat{k}$ such that
\begin{enumerate}
\item
each $x_j$ is not $\trivial$;
\item
$w_i$ is equal to $x_1x_2 \dotso x_r$;
\item
for all $j$, if $X$ appears in $w_j$, $\xType{X}$ is in
     $\alpha_{\theta_i(j)}$;
\item
for any $j$ from $1$ to $r-1$, $\theta_i(j)$ is not equal to
     $\theta(j+1)$.
\end{enumerate}
Then $w_i$ maps to the phrase $x_1|x_2|\dotsc|x_r$.
Write $p_i$ for the image of $w_i$.
Then $p$ is mapped to $p_1||p_2||\dotsc||p_n$ which we label $m$.
We define $\theta$ to be the map such that if $X$ is a letter in the
$i$th component of $m$ and $\xType{X}$ is in $\alpha_j$, then
$\theta(i)$ is equal to $j$.
We define $\psi(p)$ to be $(m,\theta)$.
\par
We have the following lemma which corresponds to
Lemma~\ref{lem:psiinvariance}.
\begin{lem}\label{lem:multi-psiinvariance}
Let $p$ and $p^\prime$ be elements of $\mathcal{P}(\alpha)$.
If $p \sim p^\prime$, then $\psi(p) \sim_M \psi(p^\prime)$.
\end{lem}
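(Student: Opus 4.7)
The plan is to mirror the proof of Lemma~\ref{lem:psiinvariance}. It suffices to consider the case where $p$ and $p^\prime$ are related by a single isomorphism or a single homotopy move, since $\sim$ is generated by such relations. When $p$ and $p^\prime$ are isomorphic, $\psi(p)$ and $\psi(p^\prime)$ are isomorphic as nanomultiphrases and we are done. So the work reduces to examining each of H1, H2 and H3 in turn.

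The structural observation that drives the case analysis is that each $\alpha_i$ is $\tau$-invariant and each element of $S$ lies entirely inside some $\alpha_i \times \alpha_i \times \alpha_i$. Hence any pair $\{A,B\}$ with $\xType{B}=\tau(\xType{A})$ and any triple $\{A,B,C\}$ with $(\xType{A},\xType{B},\xType{C})\in S$ have all their projections in a common $\alpha_i$. Because $\psi$ splits each component of $p$ into pieces at precisely those points where consecutive letters belong to different $\alpha_i$'s, the adjacent blocks $AA$, $AB$, $BA$, $AC$, $BC$ that feature in the H1, H2 and H3 patterns each lie inside a single piece of some phrase of $\psi(p)$. For H1 we apply the corresponding H1 move in that piece, possibly leaving an empty piece; in that case a single simple or concatenating reduction within the same phrase produces $\psi(p^\prime)$. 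H2 is handled analogously, affecting at most one piece in each of at most two phrases. For H3 no letters are added or removed and no projections change, so the H3 move is applied directly to $\psi(p)$ and yields $\psi(p^\prime)$ with no reduction needed.

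The main point to check carefully is that every reduction the argument invokes is legal in $\mathcal{M}_R(\alpha)$, that is, no concatenating reduction is ever asked to merge components coming from two different phrases. This is exactly guaranteed by the observation above: all the pieces produced by $\psi$ from one component $w_i$ of $p$ live inside the single phrase $p_i$, so any empty piece created by an H1 or H2 move has both of its neighbours, when they exist, within the same phrase; when a neighbour does not exist because the empty piece is at the start or end of a phrase, the multiphrase reduction rules stipulate a simple reduction. Thus the entire sequence of moves lies within $\mathcal{M}_R(\alpha)$ and exhibits $\psi(p)\sim_M\psi(p^\prime)$.
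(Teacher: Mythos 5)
Your proof is correct and follows essentially the same route the paper intends; the paper omits the argument, referring only to Lemma~\ref{lem:psiinvariance}, and your case analysis over isomorphism and the moves H1, H2, H3 is precisely the expected adaptation. The two points you make explicit --- that $\tau$-invariance of each $\alpha_i$ and the decomposition $S=\bigcup_i S_i$ keep every affected adjacent block inside a single piece of a single phrase, and that the multiphrase reduction rule forcing a simple reduction at the start or end of a phrase rules out any illegal concatenating reduction across a phrase boundary --- are exactly what make the adaptation go through.
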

\begin{proof}
We omit the proof. Is it is similar to that of
 Lemma~\ref{lem:psiinvariance}.
\end{proof} 
The following theorem is analogous to Theorem
\ref{thm:bijection}.
\begin{thm}
There is a bijection between the homotopy classes of
 $\mathcal{P}(\alpha)$ and the equivalence classes of $\mathcal{M}_R(\alpha)$
 under $\sim_M$.
In other words
\begin{equation*}
\mathcal{P}(\alpha) / \sim \quad \cong \quad \mathcal{M}_R(\alpha) / \sim_M.
\end{equation*} 
\end{thm}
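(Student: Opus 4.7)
The plan is to imitate the proof of Theorem~\ref{thm:bijection}, replacing every use of $\chi$, $\psi$, $\Omega$ and Lemma~\ref{lem:hinvariance} by its multiphrase analog. First, Lemma~\ref{lem:multi-psiinvariance} already shows that $\psi\colon\mathcal{P}(\alpha)\to\mathcal{M}_R(\alpha)$ sends $\sim$-equivalent nanophrases to $\sim_M$-equivalent pairs, so it induces a well-defined map $\psi_M\colon\mathcal{P}(\alpha)/\sim\;\to\;\mathcal{M}_R(\alpha)/\sim_M$.

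Next I would define $\Omega\colon\mathcal{M}_R(\alpha)\to\mathcal{P}(\alpha)$ by $\Omega((m,\theta))=\chi(m)$, using the extension of the concatenating map to multiphrases introduced before the statement. The analog of Lemma~\ref{lem:hinvariance} that I need is: if $(m,\theta)\sim_M(m^\prime,\theta^\prime)$ then $\chi(m)\sim\chi(m^\prime)$. For the $\sim_K$ part, $m\sim m^\prime$ as nanomultiphrases and applying the obvious multiphrase extension of Lemma~\ref{lem:concatenation} (which is proved just as in \cite{Fukunaga:nanophrases} by translating each homotopy move on $m$ to a homotopy move on $\chi(m)$) gives $\chi(m)\sim\chi(m^\prime)$. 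For the reduction/augmentation part, note that the forbidden cross-phrase concatenating reductions are exactly those that would alter $\chi(m)$; the reductions that are allowed either delete an empty component of a phrase or concatenate two adjacent components inside a single phrase, and in both cases the word obtained by concatenating the components of each phrase is unaffected. Thus $\chi(m)=\chi(m^\prime)$, and in particular $\chi(m)\sim\chi(m^\prime)$. This shows $\Omega$ descends to a map $\Omega_M$ on equivalence classes.

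A direct check from the definitions shows that $\Omega(\psi(p))=p$ for every $p\in\mathcal{P}(\alpha)$: when we apply $\psi$ to $p$, the $i$th phrase of the resulting multiphrase is a sequence of non-empty words whose concatenation is the $i$th component of $p$, and $\chi$ then concatenates these back into that component. Consequently $\Omega_M\circ\psi_M=\mathrm{id}$, which gives injectivity of $\psi_M$.

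The main obstacle is surjectivity of $\psi_M$. Given $(m,\theta)\in\mathcal{M}_R(\alpha)$, I claim $(m,\theta)\sim_M\psi(\chi(m))$; this proves surjectivity because $\psi_M([\chi(m)])=[\psi(\chi(m))]=[(m,\theta)]$. To prove the claim I would induct on the number of components of $m$ that are either empty or whose adjacent-within-a-phrase neighbour has the same value of $\theta$. At each inductive step, choose such a component and apply the appropriate reduction: a simple reduction if the component is empty and either sits at the boundary of a phrase or is flanked by components with distinct $\theta$-values, and a concatenating reduction otherwise. The restriction that concatenating reductions cannot cross phrase boundaries matches precisely the way $\psi$ splits $\chi(m)$ phrase-by-phrase at positions where $\theta$ changes value, so when no further reductions apply within any phrase the resulting element of $\mathcal{M}_R(\alpha)$ has no empty components and is locally variable on $\theta$ within each phrase, which characterizes the image of $\psi$ and forces it to equal $\psi(\chi(m))$ up to isomorphism. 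Verifying that these reductions can be performed independently phrase-by-phrase, so that order does not cause a failure, is the most technical part, but it is handled by the same type of local confluence argument used in Lemma~\ref{lem:confluence}.
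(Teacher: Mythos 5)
The paper offers no written proof here, only the remark that the theorem is ``analogous to Theorem~\ref{thm:bijection},'' and your proposal does exactly what that remark asks for: it transplants the $\psi$/$\Omega$ argument to the multiphrase setting. Your treatment of $\Omega$ is correct, and your observation that the allowed reductions and augmentations on $\mathcal{M}_R(\alpha)$ leave $\chi$ unchanged precisely because cross-phrase concatenations are forbidden is the right point to make; it is also correct that $\Omega\circ\psi=\mathrm{id}$ on $\mathcal{P}(\alpha)$ is an immediate check, giving injectivity of $\psi_M$.

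Two small remarks. First, you correctly identify that surjectivity is the real content and must be argued separately; the paper's own proof of Theorem~\ref{thm:bijection} compresses this step rather aggressively, so making it explicit is the right call. Second, you do not actually need the confluence machinery of Lemma~\ref{lem:confluence} for surjectivity, and the induction can be stated more simply. Since every $(m,\theta)\in\mathcal{M}_R(\alpha)$ already satisfies the within-phrase locally-variable condition on $\theta$, the only obstruction to lying in the image of $\psi$ is the presence of empty components, so you can induct just on the number of empty components: each reduction removes exactly one, preserves membership in $\mathcal{M}_R(\alpha)$, and fixes $\chi$. When no empty components remain, $(m',\theta')$ lies in the image of $\psi$, hence $(m',\theta')=\psi(\Omega((m',\theta')))=\psi(\chi(m'))=\psi(\chi(m))$. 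No claim about order-independence or confluence is required, because you only need to exhibit one reduction sequence, not a canonical normal form. With that streamlining, the proof is complete and matches the intended approach.
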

We define $\mathcal{K}_M(\alpha)$ to be the set of equivalence classes of
$\mathcal{M}_R(\alpha)$ under $\sim_K$.
Then we can define the concept of $i$-reduction of an element of
$\mathcal{K}_M(\alpha)$ as we did for $\mathcal{K}(\alpha)$.
The proof that this concept is well-defined is almost identical to that
of the case of $\mathcal{K}(\alpha)$ (Lemma~\ref{lem:reduction-wd}).
\par
It is easy to check that the Confluence condition proved in
Lemma~\ref{lem:confluence} also holds for $\mathcal{K}_M(\alpha)$.
As the Finiteness condition also holds, we can use Newman's Diamond
Lemma to get the following proposition corresponding to
Proposition~\ref{prop:reduction-wd}.
\begin{prop}\label{prop:multi-reduction-wd}
Each equivalence class of $\mathcal{K}_M(\alpha)$ under $\sim_M$ contains
 exactly one reduced element.
\end{prop}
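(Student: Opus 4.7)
The plan is to deduce Proposition~\ref{prop:multi-reduction-wd} from Newman's Diamond Lemma (Lemma~\ref{lem:diamond}), exactly as Proposition~\ref{prop:reduction-wd} was deduced for $\mathcal{K}(\alpha)$. The ``reducing moves'' on $\mathcal{K}_M(\alpha)$ are the $i$-reductions (which themselves need to be shown to be well-defined, but this is verbatim the argument of Lemma~\ref{lem:reduction-wd}, invoking a multiphrase analogue of Lemma~\ref{lem:kembedding}). With this set-up the theorem reduces to checking the Finiteness and Confluence conditions on $\mathcal{K}_M(\alpha)$.

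Finiteness is immediate: any reduction strictly decreases the component count, so from $c \in \mathcal{K}_M(\alpha)$ no reducing chain can have more than $\nc(c)+1$ terms. For Confluence, let $c$ be both $i$-reducible and $j$-reducible with $i < j$, and let $c_i$ and $c_j$ be the respective reductions. First I would establish the multiphrase analogue of Lemma~\ref{lem:ijreducible}, producing a representative $(m,\theta)$ of $c$ in which both the $i$th and $j$th components of $m$ are empty; the proof of Lemma~\ref{lem:ijreducible} carries over directly because it only relies on Lemma~\ref{lem:simplify} and the $\sim_K$-equivalence, both of which behave identically in the multiphrase setting.

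With such an $(m,\theta)$ in hand, I would then run the case analysis of Lemma~\ref{lem:confluence}, split by whether $j - i \geq 3$, $j - i = 2$, or $j - i = 1$, and by the types (simple or concatenating) of the two reductions. The only difference from the phrase case is that a concatenating reduction is blocked whenever the empty component sits at the boundary of a phrase of $m$, in which case only the simple reduction is available. This changes which of the four alternatives in the statement of Lemma~\ref{lem:confluence} occurs but never breaks confluence: if $j - i \geq 2$ the two reductions involve disjoint (or essentially disjoint) regions of the multiphrase, so the same diamond as in Figures~\ref{fig:independent-simple} and \ref{fig:merge} closes up with whatever reduction types are locally admissible; and if $j = i+1$ the four subcases (both simple, both concatenating, mixed) are handled exactly as in Figures~\ref{fig:gap1cc}--\ref{fig:gap1ss}, with the extra observation that if the $i$th and $(i+1)$th components lie in different phrases, then concatenating reductions are simply unavailable and one lands in an even easier configuration.

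The main obstacle, and really the only one requiring care, is the bookkeeping around phrase boundaries in this last case: one has to verify that after performing, say, the $i$-reduction by a simple reduction, the residual empty component at position $j$ (or $j-1$) is still reducible of the appropriate type, accounting for the rule forbidding concatenation across phrase boundaries. This is a finite check over the handful of positional configurations of the two empty components relative to the $|$- and $||$-separators, and in every configuration one produces either an equality of reductions, a triangle, or a common further reduction $c'$, matching one of the four cases listed in Lemma~\ref{lem:confluence}. Once Confluence is verified, Lemma~\ref{lem:diamond} immediately yields the proposition.
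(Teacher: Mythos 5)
Your proposal is correct and takes essentially the same route as the paper: invoke Newman's Diamond Lemma, check Finiteness via component count, and verify Confluence by re-running the case analysis of Lemma~\ref{lem:confluence} with the observation that concatenating reductions across phrase boundaries are disallowed. The paper simply states that these conditions carry over ("it is easy to check"), whereas you have spelled out the phrase-boundary bookkeeping; the substance is the same.
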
 
For an element $p$ of $\mathcal{P}$ we denote by $R_K(p)$ the reduced
element in $\mathcal{K}_M(\alpha)$ corresponding to the element of
$\mathcal{K}_M(\alpha)$ containing $\psi(p)$.
The following theorem corresponds to Theorem~\ref{thm:invariance}.
\begin{thm}\label{thm:multi-invariance}
For a nanophrase $p$ in $\mathcal{P}(\alpha)$, $R_K(p)$ is a homotopy
 invariant of $p$. 
\end{thm}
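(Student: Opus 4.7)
The plan is to imitate exactly the proof of Theorem~\ref{thm:invariance}, with the nanophrase/nanomultiphrase bijection and Proposition~\ref{prop:multi-reduction-wd} playing the roles of Theorem~\ref{thm:bijection} and Proposition~\ref{prop:reduction-wd} respectively. So I would first unpack the definition of $R_K(p)$: starting from $p$, apply $\psi$ to land in $\mathcal{M}_R(\alpha)$, pass to the $\sim_K$-class in $\mathcal{K}_M(\alpha)$, and then select the unique reduced representative of the $\sim_M$-class containing it (whose uniqueness is exactly Proposition~\ref{prop:multi-reduction-wd}).

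Given $p, p^\prime \in \mathcal{P}(\alpha)$ with $p \sim p^\prime$, the first step is to apply Lemma~\ref{lem:multi-psiinvariance} to conclude $\psi(p) \sim_M \psi(p^\prime)$ in $\mathcal{M}_R(\alpha)$. Next, because $\sim_M$ on $\mathcal{M}_R(\alpha)$ descends to an equivalence relation on $\mathcal{K}_M(\alpha)$ (just as was observed in the $\mathcal{K}(\alpha)$ case), the $\sim_K$-classes of $\psi(p)$ and $\psi(p^\prime)$ lie in a common $\sim_M$-class of $\mathcal{K}_M(\alpha)$. Finally, applying Proposition~\ref{prop:multi-reduction-wd} to that single $\sim_M$-class, we obtain one and only one reduced element, which by definition equals both $R_K(p)$ and $R_K(p^\prime)$.

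Thus the whole argument is a two-line formal consequence of the results already in place, exactly paralleling the proof of Theorem~\ref{thm:invariance}. There is no real obstacle at this stage: the serious content (the Confluence condition, the Finiteness condition, the behaviour of $\psi$ under homotopy moves, the bijection between $\mathcal{P}(\alpha)/\!\sim$ and $\mathcal{M}_R(\alpha)/\!\sim_M$) has already been established in the preceding lemmas and propositions, and all that remains is to chain them together. If anything, the only mildly delicate point is to be explicit that the $\sim_M$-equivalence class in $\mathcal{K}_M(\alpha)$ is the well-defined object whose unique reduced element is what we are calling $R_K(p)$; once stated, the invariance follows immediately.
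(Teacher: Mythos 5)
Your proposal is correct and matches the paper's (implicit) argument: the paper omits the proof of Theorem~\ref{thm:multi-invariance} precisely because it is the direct analogue of Theorem~\ref{thm:invariance}, which is proved by citing the bijection theorem and the uniqueness of the reduced element. Your citation of Lemma~\ref{lem:multi-psiinvariance} (rather than the full bijection theorem) isolates exactly the implication $p \sim p^\prime \Rightarrow \psi(p) \sim_M \psi(p^\prime)$ actually needed, which together with Proposition~\ref{prop:multi-reduction-wd} gives the result.
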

We extend the map $s_i$ to elements of $\mathcal{M}_R(\alpha)$ in a
natural way.
For $(m,\theta)$ in $\mathcal{M}_R(\alpha)$ we define $s_i((m,\theta))$
to be the nanomultiphrase derived from $(m,\theta)$ by deleting all
components for which the index of the component is mapped by $\theta$ to
some integer other than $i$.
By definition $s_i((m,\theta))$ contains the same number of phrases as
$(m,\theta)$.
\par
Let $(m,\theta)$ be an element of $R_K(p)$.
We write $\theta_R(p)$ for $\theta$ which is an invariant of $p$.
We also define $P_{R,i}(p)$ to be $s_i((m,\theta))$.
The nanomultiphrase $P_{R,i}(p)$ is a nanomultiphrase in
$\mathcal{M}(\alpha_i)$ and is, modulo $\sim_i$, an invariant of $p$.
Taken together, the map $\theta_R(p)$ and the set of nanomultiphrases
$P_{R,i}(p)$ give a complete invariant for nanophrases.
\par
Definitions of rank and homotopy rank naturally extend to
nanomultiphrases and we use the same notation as we did for nanowords
and nanophrases.
Following the arguments about rank and homotopy rank in
Section~\ref{sec:nanoword-inv}, it is easy to prove the following
theorem which corresponds to Theorem~\ref{thm:hr}.
\begin{thm}
Let $(\alpha,\tau,S)$ be a composite homotopy data
 triple and let its prime factors be denoted by
 $(\alpha_i,\tau_i,S_i)$.
Let $p$ be a nanophrase over $\alpha$.
Then
\begin{equation}
\hr(p) = \sum_i \hr(P_{R,i}(p)).
\end{equation}
\end{thm}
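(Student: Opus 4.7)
The plan is to follow the proof of Theorem~\ref{thm:hr} verbatim, with nanowords replaced by nanophrases, $\mathcal{P}_R(\alpha)$ replaced by $\mathcal{M}_R(\alpha)$, and the map $\Omega$ replaced by the extended concatenating map $\chi\colon \mathcal{M}(\alpha) \to \mathcal{P}(\alpha)$ of Section~\ref{sec:nanophrase-inv}. The required ingredients transfer to the nanomultiphrase setting without essential change, by repeating the earlier arguments: a nanomultiphrase analog of Lemma~\ref{lem:hinvariance} (the relation $\sim_M$ on $\mathcal{M}_R(\alpha)$ descends to $\sim$ on $\mathcal{P}(\alpha)$ via $\chi$), an analog of Proposition~\ref{prop:factorequivalence} (two elements of $\mathcal{M}_R(\alpha)$ are $\sim_K$-equivalent if and only if their $s_i$-restrictions are $\sim_i$-equivalent for every $i$), and an analog of Proposition~\ref{prop:reducedisminimal} (if $p$ is minimal in $\mathcal{P}(\alpha)$ then $\psi(p)$ represents $R_K(p)$). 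Each of these has the same proof as its nanoword counterpart, using Lemma~\ref{lem:subhomotopyinvariance}, Lemma~\ref{lem:simplify} and Lemma~\ref{lem:multi-psiinvariance}.

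Granting these, for the lower bound I would choose a minimal nanophrase $q$ homotopic to $p$, so that $\rank(q) = \hr(p)$ and $\psi(q) \in R_K(p)$. Since the letters of $q$ are partitioned among the nanomultiphrases $s_i(\psi(q))$ according to $\theta_R(p)$, and each $s_i(\psi(q))$ is $\sim_i$-equivalent to $P_{R,i}(p)$, one obtains
\begin{equation*}
\hr(p) = \rank(q) = \sum_i \rank(s_i(\psi(q))) \geq \sum_i \hr(P_{R,i}(p)).
\end{equation*}

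For the upper bound, for each $i$ I would pick a minimal nanomultiphrase $m_i \in \mathcal{M}(\alpha_i)$ with $m_i \sim_i P_{R,i}(p)$. Because no homotopy move alters the number of phrases or the number of components in any phrase, each $m_i$ has the same phrase-component shape as $P_{R,i}(p)$, so the $m_i$ can be interleaved slot by slot according to $\theta_R(p)$ into a single element $(m,\theta_R(p)) \in \mathcal{M}_R(\alpha)$ satisfying $s_i((m,\theta_R(p))) = m_i$ for every $i$. The nanomultiphrase analog of Proposition~\ref{prop:factorequivalence} then places $(m,\theta_R(p))$ in $R_K(p)$, and the analog of Lemma~\ref{lem:hinvariance} yields $\chi(m) \sim p$, whence
\begin{equation*}
\hr(p) \leq \rank(\chi(m)) = \sum_i \rank(m_i) = \sum_i \hr(P_{R,i}(p)).
\end{equation*}
Combining the two inequalities gives the claimed equality. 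The only point requiring real attention, and the main obstacle, is the interleaving step in the upper bound: one must check that the assembly produces a legitimate element of $\mathcal{M}_R(\alpha)$, that is, that the induced map on component indices is $\theta_R(p)$ and, in particular, satisfies the local variability condition within each phrase. This is automatic because the assembly is performed precisely to match the pattern prescribed by $\theta_R(p)$, which itself already satisfies that condition.
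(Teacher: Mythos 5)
Your proposal is correct and matches the paper's intended proof exactly: the paper itself only remarks that this theorem follows by repeating the rank and homotopy rank arguments of Section~\ref{sec:nanoword-inv} with $\mathcal{M}_R(\alpha)$ in place of $\mathcal{P}_R(\alpha)$ and the extended $\chi$ in place of $\Omega$, which is precisely what you carry out. Your explicit treatment of the interleaving step in the upper bound (using invariance of the phrase-component shape) is the only point that needs care, and you handle it correctly.
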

We finish by giving the following theorem.
\begin{thm}\label{thm:phrasedecidability}
Let $(\alpha,\tau,S)$ be a composite homotopy data
 triple and let its prime factors be denoted by
 $(\alpha_i,\tau_i,S_i)$.
Suppose, for each $i$, the homotopy given by
 $(\alpha_i,\tau_i,S_i)$ is reduction decidable and
 equality decidable.
Then the homotopy given by $(\alpha,\tau,S)$ is also
 reduction decidable and equality decidable.
\end{thm}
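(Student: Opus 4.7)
My plan is to mimic the proof of Theorem~\ref{thm:decidability} within the nanomultiphrase setting of Section~\ref{sec:nanophrase-inv}. The first step is to establish a nanophrase analog of Proposition~\ref{prop:nanowordreduction}: given $p \in \mathcal{P}(\alpha)$, compute a representative $R(p)$ with $\psi(R(p)) \in R_K(p)$ in finite time. Starting from $\psi(p) = (m,\theta)$, iteratively scan the components of $m$; a component whose $\theta$-label is $j$ can be transformed to $\trivial$ via $\sim_K$ precisely when the corresponding nanoword over $\alpha_j$ is $\sim_j$-contractible, and this is decidable because the $j$th prime factor is reduction decidable. Whenever such a component is found, perform the transformation (using the embedding supplied by Lemma~\ref{lem:restrictionembedding} together with the multiphrase analog of Proposition~\ref{prop:factorequivalence}) and then apply the corresponding reduction move, obtaining a nanomultiphrase with strictly fewer components. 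The process terminates, and by Proposition~\ref{prop:multi-reduction-wd} the final output lies in $R_K(p)$.

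For equality decidability, given $p$ and $q$, compute $R(p)$ and $R(q)$ and consider $\psi(R(p)) = (m,\theta)$, $\psi(R(q)) = (m',\theta')$. By Proposition~\ref{prop:multi-reduction-wd} these reduced elements are uniquely determined up to $\sim_K$, so $p \sim q$ if and only if $(m,\theta) \sim_K (m',\theta')$. Applying the multiphrase analog of Proposition~\ref{prop:factorequivalence}, the latter is equivalent to (i) the equality $\theta = \theta'$ and (ii) for each $i$, the $\sim_i$-equivalence of the sub-multiphrases $s_i((m,\theta))$ and $s_i((m',\theta'))$. Condition (i) is an immediate check, and condition (ii) reduces componentwise to equality decidability of nanophrases in the prime homotopy, which is granted by hypothesis.

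For reduction decidability, given $p$ and an index $i$, compute $R(p)$ and check whether the $i$th phrase of $\psi(R(p))$ is $\zerocomp$. If $p$ is $i$-reducible then $p \sim p'$ for some $p'$ whose $i$th component is empty; by Lemma~\ref{lem:multi-psiinvariance}, $\psi(p) \sim_M \psi(p')$, and the $i$th phrase of $\psi(p')$ is already $\zerocomp$. Because no reduction can introduce components into a phrase that is $\zerocomp$, the reduced form of $\psi(p)$ retains $\zerocomp$ in its $i$th phrase. Conversely, if the reduced form has $\zerocomp$ as its $i$th phrase, then the nanophrase recovered through the bijection of Section~\ref{sec:nanophrase-inv} between $\mathcal{P}(\alpha)/\sim$ and $\mathcal{M}_R(\alpha)/\sim_M$ is homotopic to $p$ and has empty $i$th component, witnessing $i$-reducibility.

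The main obstacle is verifying the multiphrase analog of Proposition~\ref{prop:factorequivalence}, namely that $\sim_K$-equivalence on $\mathcal{M}_R(\alpha)$ decomposes cleanly into componentwise $\sim_i$-equivalences, so that each step of the algorithm reduces to a decidable problem in the prime factors. Once that is in hand, termination of the reduction procedure and preservation of $\zerocomp$ phrases under reductions follow routinely from the Diamond Lemma framework already set up, and no further new ideas are needed beyond lifting the Section~\ref{sec:nanoword-inv} machinery to the multiphrase setting.
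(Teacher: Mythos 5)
Your overall strategy --- compute a reduced representative $R(p)$ by iteratively applying reductions guided by the prime factors, then compare via a multiphrase analog of Proposition~\ref{prop:factorequivalence} --- matches the paper's approach for equality decidability. For reduction decidability the paper uses the more direct criterion that $p$ is $j$-reducible if and only if every component in the $j$th phrase of $\psi(p)$ is reducible (each check delegated to the appropriate prime factor), which avoids computing $R(p)$ first; your route through $R(p)$ together with the bijection of Section~\ref{sec:nanophrase-inv} is also valid but less economical.

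There is, however, a genuine error in your stated criterion for when a component of $(m,\theta)=\psi(p)$ can be emptied. You claim a component with $\theta$-label $j$ can be made $\trivial$ via $\sim_K$ ``precisely when the corresponding nanoword over $\alpha_j$ is $\sim_j$-contractible.'' A single component of a nanomultiphrase in $\mathcal{M}_R(\alpha)$ is in general \emph{not} a Gauss word --- a letter in that component may have its other occurrence in a different component --- so it is not a nanoword and contractibility is not even defined for it. And even when it happens to be a Gauss word, whether it can be emptied is a property of the whole sub-multiphrase $s_j((m,\theta))$: H2 and H3 moves available in $s_j$ may use letters shared across several components labelled $j$. The correct criterion, which the multiphrase analog of Proposition~\ref{prop:factorequivalence} supplies, is that the $l$th component of $(m,\theta)$ can be emptied iff $\mu\bigl(s_{\theta(l)}((m,\theta))\bigr)$ is reducible at the corresponding index under $\sim_{\theta(l)}$; this is exactly what \emph{reduction} decidability of the prime factor (for nanophrases, not contractibility of nanowords) decides. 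You should also realise the emptying transformation concretely by the $f_O$-deletion as in Lemma~\ref{lem:simplify} and Lemma~\ref{lem:restrictionembedding}, rather than by producing an explicit homotopy, which a yes/no decision procedure does not give you; with these corrections the rest of the argument goes through.
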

\begin{proof}
The fact that $(\alpha,\tau,S)$ is equality decidable is
 analogous to Theorem~\ref{thm:decidability} and can be proved
 similarly.
\par
Given a nanophrase $p$ in $\mathcal{P}$, $p$ is $j$-reducible if and
 only if all the components in the $j$th phrase of $\psi(p)$ are
 reducible.
Since the homotopies given by $(\alpha_i,\tau_i,S_i)$
 are all reduction decidable, there is a finite time algorithm which
 determines the reducibility of all these components.
Thus the homotopy given by $(\alpha,\tau,S)$ is reduction decidable.
\end{proof}
%%%%%%%%%%%%%%%%%%%%%%%%%%%%%%%%%%%%%%%%%%%%%%%%
\bibliography{mrabbrev,nwdecomp}
\bibliographystyle{hamsplain}
\end{document}